\newtheorem {thm}{Theorem}
\newtheorem* {thm*}{Theorem}
\newtheorem {cor}[thm]{Corollary}
\newtheorem* {cor*}{Corollary}
\newtheorem {lem}[thm]{Lemma}
\newtheorem {prop}[thm]{Proposition}
\newtheorem {defi}[thm]{Definition}
\newtheorem {rem}[thm]{Remark}
\theoremstyle{definition}
\newtheorem {exa}[thm]{Example}
\newtheorem* {conj*}{Conjecture}
\newtheorem* {quest*}{Question}
\newcommand{\lval}{v_\ell}
\newcommand{\Q}{\mathbb{Q}}
\newcommand{\Z}{\mathbb{Z}}
\newcommand{\GL}{\mathrm{GL}}
\numberwithin{equation}{section}
\begin{document}

\title{The $1$-Eigenspace for matrices in $\GL_2(\Z_\ell)$}
\author{Davide~Lombardo and Antonella~Perucca}

\begin{abstract}
Fix some prime number $\ell$ and consider an open subgroup $G$ either of $\GL_2(\Z_{\ell})$ or of the normalizer of a Cartan subgroup of $\GL_2(\Z_{\ell})$. The elements of $G$ act on $(\Z/\ell^n \Z)^2$ for every $n\geqslant 1$ and also on the direct limit, and we call $1$-Eigenspace the group of fixed points. We partition $G$ by considering the possible group structures for the $1$-Eigenspace and show how to evaluate with a finite procedure the Haar measure of all sets in the partition. The results apply to all elliptic curves defined over a number field, where we consider the image of the $\ell$-adic representation and the Galois action on the torsion points of order a power of $\ell$.
\end{abstract}

\address[]{Dipartimento di Matematica, Università di Pisa, Largo Pontecorvo 5, 56127 Pisa, Italy}
\email[]{davide.lombardo@unipi.it}
\address[]{Universit\"at Regensburg, Universit\"atsstra{\ss}e 31,
93053 Regensburg, Germany}
\email[]{mail@antonellaperucca.net}
\keywords{Haar measure, general linear group, Cartan subgroup, $\ell$-adic representation, elliptic curve}
\subjclass[2010]{28C10, 16S50, 11G05, 11F80}
\maketitle

\section{Introduction}

Fix a prime number $\ell$, and let $G$ be an open subgroup of either $\GL_2(\mathbb{Z}_\ell)$ or the normalizer of a (possibly ramified) Cartan subgroup of $\GL_2(\mathbb{Z}_\ell)$. This general framework can be applied to elliptic curves defined over a number field, where $G$ is the image of the $\ell$-adic representation.
We identify an element of $G$ with an automorphism of the direct limit in $n$ of $(\Z/\ell^n \Z)^2$: for elliptic curves this means considering the Galois action on the group of torsion points whose order is a power of $\ell$.

We equip $G$ with its Haar measure, normalized so as to assign volume one to $G$, and we compute the measure of subsets of $G$ of arithmetic interest.
For $M \in G$, we call \textit{$1$-Eigenspace of $M$} the subgroup of fixed points of $M$ for its action on the direct limit $\varinjlim_n (\Z/\ell^n\Z)^2$. This leads to partitioning $G$ into subsets according to the group structure of the 1-Eigenspace.
More specifically, the matrices whose $1$-Eigenspace is an infinite group form a subset of $G$ that has Haar measure zero, so we only investigate the possible finite group structures. For all integers $a,b\geqslant 0$ we consider the set 
\begin{equation*}
\mathcal M_{a,b}:=\{M\in G: \, \ker (M-I) \simeq  \Z/{\ell^a}\Z\times \Z/{\ell^{a+b}}\Z  \}
\end{equation*}
and its Haar measure in $G$, which is well-defined for each pair $(a, b)$ and that we call $\mu_{a,b}$. The aim of this paper is to show that the whole countable family $\mu_{a,b}$ can be effectively computed:

\begin{thm}\label{main-thm}
Fix a prime number $\ell$ and an open subgroup $G$ of either $\GL_2(\mathbb{Z}_\ell)$ or the normalizer of a (possibly ramified) Cartan subgroup of $\GL_2(\mathbb{Z}_\ell)$. It is possible to compute the whole family $\{\mu_{a,b}\}$ for $(a,b)\in \mathbb N^2$ with a finite procedure. More precisely, we can partition $\mathbb N^2$ in finitely many subsets $S$ (as in Definition \ref{admissible} and explicitly computable) such that the following holds: there is some (explicitly computable) rational number $c_S\geqslant 0$ such that for every $(a,b)\in S$ we have 
$$\mu_{a,b}= c_S\cdot \ell^{-(\dim(G) a+b)}$$
where the dimension of $G$ is either $4$ or $2$, according to whether $G$ is open in $\GL_2(\mathbb{Z}_\ell)$ or in the 
normalizer of a Cartan subgroup. The sets $S$ and the constants $c_S$ may depend on $\ell$ and $G$.
\end{thm}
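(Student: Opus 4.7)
The plan is to recast the kernel condition as a condition on the Smith normal form of $M-I \in M_2(\Z_\ell)$, then exploit the openness of $G$ to linearize the problem near the identity and extract the advertised scaling.

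First I would observe that, provided $\det(M-I)\neq 0$ (which holds outside a measure-zero set, corresponding to matrices with infinite $1$-Eigenspace), the Smith normal form of $M-I$ over $\Z_\ell$ can be written $\mathrm{diag}(\ell^{\alpha}, \ell^{\beta})$ with $\alpha\leqslant\beta$, and $\ker(M-I)$ as a subgroup of $(\Q_\ell/\Z_\ell)^2$ is isomorphic to $\Z/\ell^{\alpha}\Z \times \Z/\ell^{\beta}\Z$. Thus $\mathcal{M}_{a,b}$ is cut out by the two explicit conditions $v_\ell(\mathrm{cont}(M-I)) = a$ and $v_\ell(\det(M-I)) = 2a+b$, where $\mathrm{cont}$ denotes the gcd of the matrix entries. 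Both depend on $M$ only modulo $\ell^{2a+b+1}$, so $\mathcal{M}_{a,b}$ is a finite union of cosets of an open congruence subgroup of $G$, and in particular measurable.

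Next, choose $n_0$ so that the open subgroup $H := G \cap (I+\ell^{n_0} M_2(\Z_\ell))$ admits a logarithmic parametrization $H \simeq \ell^{n_0}\mathfrak{g}$ by a $\Z_\ell$-lattice $\mathfrak{g}\subseteq M_2(\Z_\ell)$ of rank $\dim(G)$ (rank $4$ in the $\GL_2$ case, rank $2$ in the Cartan-normalizer case). For any $a\geqslant n_0$, every $M \in G$ with $v_\ell(M-I) \geqslant a$ already lies in $H$ and may be written $M = \exp(\ell^{a} Z)$ with $Z \in \mathfrak{g}$; moreover $M - I = \ell^{a} Z \cdot (I + O(\ell^{a}))$ with the second factor invertible, so the elementary divisors of $M-I$ coincide with $\ell^{a}$ times those of $Z$. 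In this ``main regime'' the computation of $\mu_{a,b}$ reduces to the additive measure of the set $\{Z \in \mathfrak{g} : \mathrm{cont}(Z) = 1,\; v_\ell(\det Z) = b\}$, multiplied by the Jacobian factor $\ell^{-\dim(G)\,a}$ coming from the rescaling $M\leftrightarrow Z$. A standard analysis of the valuation of the polynomial $\det$ on the lattice $\mathfrak{g}$ shows that the additive measure is $c\cdot \ell^{-b}$ for all $b$ larger than some threshold $n_1$ depending on $\mathfrak{g}$, yielding $\mu_{a,b}=c_S\,\ell^{-(\dim(G)a+b)}$ on the region $\{a\geqslant n_0,\,b\geqslant n_1\}$.

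The remaining pairs $(a,b)$ with $a<n_0$ or $b<n_1$ are finite in number, and for each such pair $\mathcal{M}_{a,b}$ is a union of cosets of $H$ whose measure can be evaluated directly. Each such pair (or each horizontal or vertical strip obtained by fixing the coordinate in the ``exceptional range'' and letting the other tend to infinity, after an analogous scaling argument in one variable) becomes a piece $S$ of the partition on which the formula holds with its own explicitly computable $c_S$. In the Cartan-normalizer case one must additionally handle $G\setminus C$ separately: for $M$ in the non-trivial Weyl coset, the off-diagonal block of $M-I$ contains an $\ell$-unit entry (with analogous statements for the non-split or ramified flavours), which forces $v_\ell(\mathrm{cont}(M-I))=0$ and hence $a=0$. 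Such elements contribute only to the slices $\mathcal{M}_{0,b}$, and their contribution can be computed via a one-parameter integration over the coset, again producing a $c\cdot\ell^{-b}$ scaling for $b$ large.

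The main obstacle, in my view, is not the scaling itself but the effective identification of the thresholds $n_0$ and $n_1$ and the honest evaluation of the constants $c_S$ in the Cartan-normalizer case. Depending on whether the Cartan is split, non-split unramified, or ramified, the distribution of $v_\ell(\det Z)$ on $\mathfrak{g}$ differs (for a ramified Cartan the norm form attains only restricted valuations), so the partition of $\mathbb{N}^2$ and the constants may be genuinely different in each sub-case. I would therefore treat $G$ open in $\GL_2(\Z_\ell)$ first as a clean model, and then adapt to each flavour of Cartan normalizer by making the structure of $\mathfrak{g}$ and of the Weyl coset explicit.
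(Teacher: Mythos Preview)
Your overall strategy---recast $\mathcal M_{a,b}$ via the content and determinant valuation of $M-I$, linearize near the identity to extract the $\ell^{-\dim(G)a}$ factor, then handle the non-trivial Weyl coset separately---is the same skeleton the paper uses. The paper, however, replaces your exponential/Lie-algebra parametrization by an explicit combinatorial count of lifts $\mathcal M_{a,b}(n)\to\mathcal M_{a,b}(n+1)$ (its Theorems \ref{thm:GeneralLift}, \ref{thm:LiftsRamifiedCartan}, \ref{thm:LiftsNormalizer}), from which both the scaling and the constants drop out directly. Your approach has the virtue of explaining \emph{why} the exponent is $\dim(G)\,a+b$; the paper's approach has the virtue of being uniformly finite and not needing any convergence caveats for $\ell=2$.

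The real gap in your sketch is the sentence ``after an analogous scaling argument in one variable'' for the strips $\{a\}\times[n_1,\infty)$ with $a<n_0$. Here $M$ need not lie in your linearized neighbourhood $H$, so the exponential map does not apply; you must work coset by coset in $G(n_0)$ and, within each coset $M_0H$, analyze the distribution of $v_\ell\bigl(\det((M_0-I)+\ell^{n_0}T)\bigr)$ for $T$ in the lattice. That this stabilizes to $c(M_0)\ell^{-b}$ with $c(M_0)$ \emph{independent of the coset} is exactly the content of the paper's lifting theorems, and in the ramified Cartan case it is genuinely delicate (the paper's Theorem \ref{thm:LiftsRamifiedCartan} splits into several sub-cases depending on $v_\ell(d)$, its parity, and whether the unit part of $d$ is a square, ultimately reducing via Lemmas \ref{lem:ReductionCartan}--\ref{lem:FakeSplitCartan} to the split case). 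Your ``standard analysis'' remark is too optimistic here: this is where most of the work lives. Two smaller corrections: in the non-trivial Weyl coset one does not always have $a=0$---for $\ell=2$ with a ramified Cartan the paper shows only $a\leqslant 1$ (Proposition \ref{complement})---and the reason is that $\operatorname{tr}(M)=0$ for such $M$, not anything about off-diagonal entries.
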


Some explicit results are as follows:

\begin{thm}\label{thm-countGL2}
For $\GL_2(\mathbb Z_{\ell})$, we have:
$$\mu_{a,b}= \left\{ 
\begin{array}{llll}
\dfrac{\ell^3-2\ell^2-\ell+3}{(\ell-1)^2 \cdot (\ell+1)} & \text{if $a=0, b=0$} \medskip \\
\dfrac{\ell^2-\ell-1}{\ell(\ell-1)}\cdot \ell^{-b} & \text{if $a=0, b>0$} \medskip \\
\ell^{-4a} & \text{if $a>0, b=0$} \medskip \\
(\ell + 1)\cdot \ell^{-4a-b-1} & \text{if $a>0, b>0$ .} \\
\end{array}\right.$$
\end{thm}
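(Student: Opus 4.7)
The plan is to change variables by setting $N := M - I$, so that the condition $\ker(M-I) \cong \Z/\ell^a\Z \times \Z/\ell^{a+b}\Z$ becomes the condition that the Smith normal form of $N$ is $\operatorname{diag}(\ell^a, \ell^{a+b})$. Under $M \mapsto M - I$, the normalised Haar measure on $\GL_2(\Z_\ell)$ corresponds to $c_\ell^{-1}$ times the additive Haar measure on $M_2(\Z_\ell)$ restricted to $\{N : I+N \in \GL_2(\Z_\ell)\}$, where $c_\ell = \mu_{M_2(\Z_\ell)}(\GL_2(\Z_\ell)) = (1-\ell^{-1})(1-\ell^{-2})$. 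I split into the cases $a\geqslant 1$ and $a=0$, since the constraint $I+N \in \GL_2(\Z_\ell)$ is automatic in the former (then $N \in \ell M_2(\Z_\ell)$, so $I+N \equiv I \pmod{\ell}$) but genuine in the latter.

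For $a \geqslant 1$, writing $N = \ell^a N'$ yields $\mu_{a,b} = c_\ell^{-1} \ell^{-4a} p_b$, where $p_b := \mu_{M_2(\Z_\ell)}\bigl\{N' : \operatorname{SNF}(N') = (1, \ell^b)\bigr\}$. The immediate value $p_0 = c_\ell$ gives $\mu_{a,0} = \ell^{-4a}$. For $b \geqslant 1$, $N'$ must reduce to a rank-$1$ matrix in $M_2(\F_\ell)$, of which there are $(\ell-1)(\ell+1)^2$; by the two-sided $\GL_2(\Z_\ell)$-invariance of the Smith form, I may condition on $N' \equiv \operatorname{diag}(1,0) \pmod{\ell}$. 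Writing $N' = \operatorname{diag}(1,0) + \ell X$, a direct computation gives $\det N' = \ell \cdot u$ with $u = (1+\ell X_{11}) X_{22} - \ell X_{12} X_{21}$; for any fixed $X_{11}, X_{12}, X_{21}$ the map $X_{22} \mapsto u$ is a $\Z_\ell$-affine bijection (the leading coefficient $1+\ell X_{11}$ is a unit), so $u$ is uniform on $\Z_\ell$. Hence $v_\ell(\det N') - 1$ is geometrically distributed with parameter $1-\ell^{-1}$, yielding $p_b = (\ell-1)^2(\ell+1)^2 \ell^{-b-4}$ and therefore $\mu_{a,b}=(\ell+1)\ell^{-4a-b-1}$.

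For $a = 0$ the condition $I+N \in \GL_2(\Z_\ell)$ is equivalent to $-1$ not being an eigenvalue of $N \bmod \ell$, i.e.\ to $1$ not being an eigenvalue of $M \bmod \ell$. When $b = 0$ this reduces to counting matrices in $\GL_2(\F_\ell)$ without the eigenvalue $1$, which I carry out by a routine conjugacy-class computation (going through central, split semisimple, unipotent Jordan, and non-split semisimple classes) producing the announced rational expression $(\ell^3-2\ell^2-\ell+3)/((\ell-1)^2(\ell+1))$. When $b \geqslant 1$, the crucial point is that the argument in the previous paragraph in fact shows that the conditional distribution of $v_\ell(\det N)$ given $N \bmod \ell = N_0$ depends only on the rank of $N_0$, not on the specific matrix; so the measure factors as (density of rank-$1$ reductions without $-1$ as eigenvalue) times (the geometric distribution). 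Removing the $\ell(\ell+1)$ conjugates of $\operatorname{diag}(-1,0)$ from the $(\ell-1)(\ell+1)^2$ rank-$1$ matrices in $M_2(\F_\ell)$ and combining gives $\mu_{0,b}$. The main delicate point throughout is the change-of-variables showing that $\det(N')/\ell$ is conditionally uniform on $\Z_\ell$; once this is in place, everything reduces to finite counting in $\GL_2(\F_\ell)$ and $M_2(\F_\ell)$.
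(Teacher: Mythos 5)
Your argument is correct, and it takes a genuinely different route from the paper. The paper derives Theorem \ref{thm-countGL2} from its general machinery: the level-by-level lifting counts of Theorem \ref{thm:GeneralLift} feed into Propositions \ref{prop:FinitelyManyCab} and \ref{prop:GeneralCount}, which express $\mu_{a,b}$ as $\mu_{a,b}(1)$ times explicit powers of $\#\mathbb{T}=\ell^4$ and $\ell$, and the only remaining work is the mod-$\ell$ conjugacy-class count of matrices in $\GL_2(\Z/\ell\Z)$ admitting $1$ as an eigenvalue. You instead work directly with $N=M-I$: you identify the normalized Haar measure of $\GL_2(\Z_\ell)$ with $c_\ell^{-1}$ times the restricted additive measure on $\operatorname{Mat}_2(\Z_\ell)$ (valid because multiplication by an element of $\GL_2(\Z_\ell)$ preserves additive Haar measure, so uniqueness of Haar measure applies --- this is the one assertion you state without justification and it deserves a line), translate the kernel condition into the Smith normal form $\operatorname{diag}(\ell^a,\ell^{a+b})$, and compute the distribution of $v_\ell(\det N)$ conditioned on the reduction mod $\ell$ by the explicit affine change of variables in $X_{22}$, which shows $\ell^{-1}\det N'$ is uniform and hence the valuation is geometric. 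Your two-sided invariance reduction to $\operatorname{diag}(1,0)$ is sound (SNF, the additive measure, and $v_\ell(\det)$ are all preserved under multiplication by lifts of $\bar U,\bar V$), and I checked that your constants reproduce all four cases, e.g.\ $c_\ell^{-1}(\ell+1)(\ell^2-\ell-1)(\ell-1)\ell^{-b-4}=\frac{\ell^2-\ell-1}{\ell(\ell-1)}\ell^{-b}$ for $a=0$, $b>0$, with the count $\ell(\ell+1)$ of rank-one matrices of trace $-1$ correctly removed. In effect, your geometric-distribution argument replaces the paper's recursive lift-counting ($f(n)=\#\mathbb{T}\cdot\ell^{-1}$ for $a<n<a+b$, etc.), while both proofs share the same finite $\GL_2(\F_\ell)$ counts for $a=0$ (which you, like the paper, only sketch). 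What the paper's route buys is uniformity: the same lifting theorems handle arbitrary open subgroups $G$ and (normalizers of) Cartan subgroups, where no clean additive model is available; what your route buys is a shorter, self-contained proof for the full group $\GL_2(\Z_\ell)$ that avoids the level/tangent-space formalism entirely.
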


\begin{thm}\label{thm-countsplitnonsplitCartan}
For a Cartan subgroup of $\GL_2(\mathbb Z_{\ell})$ which is either split or nonsplit (see Definition \ref{Cartan}) we respectively have:
$$\mu_{a,b} = \left\{ 
\begin{array}{lllll}
\dfrac{(\ell-2)^2}{(\ell-1)^2}   & \text{if $a=0, b=0$} \medskip  \\
\dfrac{2(\ell-2)}{\ell-1} \cdot \ell^{-b} & \text{if $a=0, b>0$} \medskip \\
\ell^{-2a} & \text{if $a>0, b=0$} \\
2\cdot \ell^{-2a-b} & \text{if $a>0, b>0$} \\
\end{array}\right.
\qquad
\mu_{a,b} = \left\{ 
\begin{array}{lllll}
\dfrac{\ell^2-2}{\ell^2-1}  & \text{if $a=0, b=0$}\medskip \\
\ell^{-2a}  & \text{if $a>0, b=0$}\medskip \\
0 & \text{if $b>0$} \,. \\
\end{array}\right.
$$
\end{thm}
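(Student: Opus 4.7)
The plan is to handle the two Cartan cases in parallel, reducing each $\mu_{a,b}$ to a simple computation with the $\ell$-adic valuation of $\alpha-1$ for $\alpha$ ranging over the group.

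First, I would set up explicit models. In the split case we may assume $G$ is the diagonal torus $\Z_\ell^\times\times\Z_\ell^\times$, so a generic element is $M=\mathrm{diag}(\lambda,\mu)$ and the Haar measure is the product of the normalized Haar measures on $\Z_\ell^\times$. In the nonsplit case we identify $G$ with $\mathcal{O}_L^\times$, where $L/\Q_\ell$ is the unramified quadratic extension, acting on $\mathcal{O}_L\cong\Z_\ell^2$ by multiplication; the Haar measure is the normalized Haar measure on $\mathcal{O}_L^\times$.

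Next I would compute the $1$-Eigenspace as a function of the matrix. In the split case $\ker(M-I)$ on $(\Q_\ell/\Z_\ell)^2$ splits as $\ker(\lambda-1)\oplus\ker(\mu-1)$, and for $\lambda\ne 1$ the factor $\ker(\lambda-1)$ is cyclic of order $\ell^{v_\ell(\lambda-1)}$ (the case $\lambda=1$ gives an infinite eigenspace and lies in a measure-zero set). Hence $\mathcal{M}_{a,b}$ corresponds exactly to the event $\{v_\ell(\lambda-1),v_\ell(\mu-1)\}=\{a,a+b\}$. In the nonsplit case, writing $\alpha-1=\pi^v u$ with $u\in\mathcal{O}_L^\times$ and $v=v_L(\alpha-1)$, one sees that $\ker(\alpha-1)$ on $L/\mathcal{O}_L$ equals $\pi^{-v}\mathcal{O}_L/\mathcal{O}_L\cong(\Z/\ell^v\Z)^2$; in particular it is always of the form $(\Z/\ell^a\Z)^2$, which forces $b=0$ and gives $\mathcal{M}_{a,0}=\{\alpha:v_L(\alpha-1)=a\}$, $\mathcal{M}_{a,b}=\emptyset$ for $b>0$.

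The core computation is then the distribution of the relevant valuation. Using the standard filtration by principal units, one gets $P(v_\ell(\lambda-1)=0)=\frac{\ell-2}{\ell-1}$ and $P(v_\ell(\lambda-1)=k)=\ell^{-k}$ for $k\geqslant 1$ when $\lambda$ is Haar on $\Z_\ell^\times$, because $[\Z_\ell^\times:1+\ell^k\Z_\ell]=\ell^{k-1}(\ell-1)$ for $k\geqslant 1$. Analogously, since $[\mathcal{O}_L^\times:1+\ell^k\mathcal{O}_L]=\ell^{2(k-1)}(\ell^2-1)$, one obtains $P(v_L(\alpha-1)=0)=\frac{\ell^2-2}{\ell^2-1}$ and $P(v_L(\alpha-1)=k)=\ell^{-2k}$ for $k\geqslant 1$.

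Finally I would assemble the answer. The nonsplit formulas read off directly from the valuation distribution on $\mathcal{O}_L^\times$. For the split case, I multiply the two independent distributions and sum over the (at most two) ordered pairs $(v_\ell(\lambda-1),v_\ell(\mu-1))$ realizing the unordered pair $\{a,a+b\}$, which produces the four cases: an extra factor $2$ whenever $b>0$, and none when $b=0$ (since $v=w$). The main obstacle, if any, is simply the bookkeeping of the four edge/interior cases in the split setting—in particular remembering the factor of $2$ for ordered pairs when $b>0$ and carefully handling the boundary $a=0$, where the probability of valuation $0$ differs from the geometric tail. Everything else is a direct computation.
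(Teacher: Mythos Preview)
Your argument is correct and yields a clean, self-contained proof. The paper proceeds differently: for $a\geqslant 1$ it specializes the general lifting machinery (Proposition~\ref{prop:GeneralCount} together with the table \eqref{eqprop:XB}), for $a=0$ in the split case it counts directly in $G(1)=(\F_\ell^\times)^2$ and then appeals to Proposition~\ref{prop:FinitelyManyCab}, and for $a=b=0$ in the nonsplit case it recovers $\mu_{0,0}$ from the identity $\sum_{a,b}\mu_{a,b}=1$. Your route---identify $G$ with $(\Z_\ell^\times)^2$ resp.\ $\mathcal{O}_L^\times$, read the $1$-Eigenspace off the valuation(s) of $\alpha-1$, and use the principal-unit filtration to get the valuation distribution---is more elementary and makes the structure of the answer transparent (the factor $2$ for $b>0$ in the split case, and the forced vanishing for $b>0$ in the nonsplit case, are immediate). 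What the paper's approach buys is uniformity: Proposition~\ref{prop:GeneralCount} and the surrounding framework apply verbatim to arbitrary open subgroups $G\subseteq G'$, whereas your direct computation is tailored to the full Cartan $G=C$.
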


\begin{thm}\label{thm-countnormalizerCartan}
For the normalizer of a split or nonsplit Cartan subgroup $C$ of $\GL_2(\Z_\ell)$ we have 
$$\mu_{a,b} =\frac{1}{2} \cdot \mu_{a,b}^{C} + \frac{1}{2} \cdot \mu^*_{a,b}$$
where $\mu_{a,b}^C$ is the Haar measure in $C$ of $\mathcal M_{a,b}\cap C$ (which can be read off Theorem \ref{thm-countsplitnonsplitCartan}) and where  we set 
$$\mu^*_{a,b}=\left\{ 
\begin{array}{lllll}
\dfrac{\ell-2}{\ell-1} & \text{if $a=0, b=0$} \medskip  \\
\ell^{-b} & \text{if $a=0, b>0$} \medskip \\
0 & \text{if $a>0$ .}\\
\end{array}\right.$$
\end{thm}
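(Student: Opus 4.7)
The plan is to split $N=C\sqcup wC$ for a fixed $w\in N\setminus C$. Because $C$ has index $2$ in $N$, the restriction of the normalized Haar measure of $N$ to $C$ agrees with half of the normalized Haar measure of $C$, and the left-translation bijection $c\mapsto wc$ identifies $wC$ with $C$ measure-theoretically. This yields the decomposition
\[
\mu_{a,b}=\tfrac{1}{2}\,\mu_{a,b}^{C}+\tfrac{1}{2}\,\nu_{a,b},
\]
where $\nu_{a,b}$ is the Haar measure in $C$ of $\{c\in C:\,wc\in\mathcal M_{a,b}\}$. The first summand is determined by Theorem \ref{thm-countsplitnonsplitCartan}, so the task reduces to proving $\nu_{a,b}=\mu^*_{a,b}$.

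My first step would be a structural analysis of matrices $M=wc$ with $c\in C$. Conjugation by $w$ realises the unique nontrivial continuous automorphism of $C$ (the swap of diagonal entries in the split case, the Galois involution $\alpha\mapsto\bar\alpha$ in the nonsplit case), and a direct matrix computation then gives $M^{2}=\lambda I$ with $\lambda\in\Z_\ell^\times$ equal to $xy$ if $c=\mathrm{diag}(x,y)$ (split case), or to the norm $N(c)=c\bar c$ (nonsplit case). Cayley--Hamilton forces $\mathrm{tr}(M)=0$ and $\lambda=-\det(M)$. Since $I\in C$ but $M\notin C$, the reduction of $M$ modulo $\ell$ is never $I$, so at least one entry of $M-I$ is an $\ell$-adic unit. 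The Smith normal form of $M-I$ over $\Z_\ell$ is therefore $\mathrm{diag}(1,1-\lambda)$ up to units, and the $1$-Eigenspace of $M$ is the cyclic group $\Z/\ell^{v_\ell(1-\lambda)}\Z$. In particular $\nu_{a,b}=0$ whenever $a\geqslant 1$.

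To finish I would determine, for $c$ Haar-uniform in $C$, the distribution of $v_\ell(1-\lambda)$. In the split case $\lambda=xy$ with $x,y$ independent and Haar-uniform in $\Z_\ell^\times$, so $\lambda$ is itself Haar-uniform in $\Z_\ell^\times$. In the nonsplit case $\lambda=N(c)$ is the image of $c$ under the norm map $N\colon\OO^\times\twoheadrightarrow\Z_\ell^\times$, a continuous surjective homomorphism of compact abelian groups, and hence pushes the normalized Haar measure forward to the normalized Haar measure on $\Z_\ell^\times$. In both cases the computation reduces to the elementary counts $\mathrm{Pr}(v_\ell(1-\lambda)=0)=(\ell-2)/(\ell-1)$ and $\mathrm{Pr}(v_\ell(1-\lambda)=b)=\ell^{-b}$ for $b\geqslant 1$ with $\lambda$ Haar-uniform on $\Z_\ell^\times$, which reproduces the claimed formulas for $\mu^*_{a,b}$.

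I expect the structural analysis of the second paragraph to be the main obstacle, particularly the observation that $M-I$ always has a unit entry and thus the $1$-Eigenspace is always cyclic; everything after that is bookkeeping. A little extra care will be needed to confirm that the argument is uniform across the split and nonsplit cases, and that $\ell=2$ introduces no exception when verifying $M^{2}=\lambda I$, the vanishing of the trace, and the fact that $M\not\equiv I\pmod\ell$.
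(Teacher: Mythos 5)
Your proposal is correct, and it reaches the formula for $\mu^*_{a,b}$ by a genuinely different route than the paper. The initial step is the same: both decompose $N=C\sqcup wC$ and use translation-invariance of the Haar measure to reduce to the measure, inside $C$, of $\{c: wc\in\mathcal M_{a,b}\}$. From there the paper works "by hand": for $n=b+1$ it parametrizes $(N\setminus C)(n)$ by pairs $(\alpha,\beta)$, and counts the pairs with $v_\ell(1-\alpha^2+d\beta^2-c\alpha\beta)=b$ by counting points of the smooth affine conic $1-x^2+y(dy-cx)=0$ over $\Z/\ell\Z$ (which has $\ell\mp 1$ points in the split/nonsplit cases) and lifting with Hensel's Lemma. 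You instead exploit the algebraic structure of the nontrivial coset: $M=wc$ has trace zero, so $M^2=\lambda I$ with $\lambda=-\det M$ ($\lambda=xy$ in the split case, $\lambda=N(c)$ in the nonsplit case), hence $\det(M-I)=1-\lambda$; since $M\not\equiv I\pmod\ell$ the $1$-Eigenspace is cyclic of order $\ell^{v_\ell(1-\lambda)}$, which already gives $\mu^*_{a,b}=0$ for $a>0$, and the count reduces to the distribution of $v_\ell(1-\lambda)$ for $\lambda$ Haar-uniform on $\Z_\ell^\times$ — uniformity holding because multiplication $(\Z_\ell^\times)^2\to\Z_\ell^\times$, respectively the norm of the unramified quadratic extension, is a surjective continuous homomorphism of compact groups. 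This is slicker and uniform in $b$ (no conic point-count, no Hensel lifting), at the price of invoking norm surjectivity, which is exactly where unramifiedness enters — consistent with the fact that the ramified case behaves differently (Theorem \ref{thm:CountCPrime}(ii)). One small repair: your justification "$I\in C$ but $M\notin C$, so $M\not\equiv I\pmod\ell$" is not a valid inference as stated (for ramified Cartan subgroups at $\ell=2$ there are elements of $N\setminus C$ congruent to $I$ modulo $2$); the correct argument is the one in Lemma \ref{lemma:NormalizerAIsZero}, namely trace zero rules it out for odd $\ell$, and for $\ell=2$ one uses that unramified parameters have $c=1$ (or the anti-diagonal shape in the split diagonal model). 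You flag this caveat yourself, and with that fix the argument goes through, including at $\ell=2$, where $(\ell-2)/(\ell-1)=0$ agrees with $\lambda\equiv 1\pmod 2$ being automatic.
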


The Haar measure $\mu_{a,b}$ can computed as the limit in $n$ of the ratio $\# \mathcal{M}_{a,b}(n) / \# G(n)$, where for a subset $X$ of $\GL_2(\Z_\ell)$ the symbol $X(n)$ denotes the image of $X$ in $\GL_2(\Z/\ell^n\Z)$. 
For fixed $a$ and $b$, the quantity $\# \mathcal{M}_{a,b}(n) / \# G(n)$ stabilizes for $n$ sufficiently large by the higher-dimensional version of Hensel's Lemma. However, since we cannot fix a single value of $n$ which is good for every pair $(a,b)$, we need technical results about counting the number of lifts of any given matrix in $\GL_2(\Z/\ell^n\Z)$ to $\GL_2(\Z/\ell^{n+1}\Z)$.

The structure of the paper is as follows. In Section \ref{sect:PreliminariesCartan} we define Cartan subgroups of $\GL_2(\mathbb{Z}_\ell)$ in full generality and prove a classification result which might be of independent interest. In Section \ref{sec-1ES} we prove general results about the group structure of the $1$-Eigenspace and set the notation for the subsequent sections. These contain further results, in particular Theorem \ref{thm-lifts} (about the reductions of $\mathcal M_{a,b}$) and the two technical results  Theorems \ref{thm:GeneralLift} and \ref{thm:LiftsNormalizer}.
Finally, the last section is devoted to the proof of Theorems 1 to 4. In \cite{LombardoPeruccaRedEC} we apply the results of this paper to solve a problem about elliptic curves:

\begin{rem}
Let $E$ be an elliptic curve defined over a number field $K$. If $\ell$ is a prime number and $E[\ell^\infty]$ is the group of $\overline{K}$-points on $E$ of order a power of $\ell$, we have general results and a computational strategy for:
\begin{itemize}
\item classifying the elements in the image of the $\ell$-adic representation according to the group structure of the fixed points in $E[\ell^\infty]$;
\item computing the density of reductions such that the $\ell$-part of the group of local points has some prescribed group structure, for the whole family of possible group structures.
\end{itemize}
 \end{rem}

\section{Cartan subgroups of $\GL_2(\mathbb{Z}_\ell)$}\label{sect:PreliminariesCartan}

\subsection{General definition of Cartan subgroups}
Classical references are \cite[Chapter 4]{MR1102012} and \cite[Section 2]{MR0387283}. 
Let $\ell$ be a prime number and $F$ be a reduced $\Q_\ell$-algebra of degree 2 with ring of integers $\mathcal{O}_F$.
Concretely, $F$ is either a quadratic extension of $\Q_\ell$, or the ring $\Q_\ell^2$ (in the latter case we define the $\ell$-adic valuation as the minimum of those of the two coordinates and by $\mathcal{O}_F$ we mean the valuation ring $\Z_\ell^2$).
Let furthermore $R$ be a $\Z_\ell$-order in $F$, by which we mean a subring of $F$ (containing $1$) which is a finitely generated $\Z_\ell$-module and satisfies $\mathbb{Q}_\ell R=F$ (i.e. $R$ spans $F$ over $\mathbb{Q}_\ell$).

The \emph{Cartan subgroup $C$ of $\GL_2(\Z_\ell)$ associated with $R$} is the group of units of $R$: the embedding $R^\times \hookrightarrow \GL_2(\Z_\ell)$ is given by fixing a $\Z_\ell$-basis of $R$ and considering the left multiplication action of $R^\times$.  The Cartan subgroup is only well-defined up to  conjugation in $\GL_2(\Z_\ell)$ because of the choice of the basis. Writing $C_R:=\operatorname{Res}_{R/\Z_\ell}(\mathbb{G}_m)$, where $\operatorname{Res}$ is the Weil restriction of scalars, we have $C =C_R (\mathbb Z_{\ell})$, provided that the Weil restriction is computed using the same $\Z_\ell$-basis for $R$. 

Equivalently, a Cartan subgroup of $\GL_2(\Z_\ell)$ can be described as follows: there exists a maximal torus $\mathcal{T}$ of $\GL_{2,\Z_\ell}$, flat over $\Z_\ell$, such that $C=\mathcal{T}(\Z_\ell)$.

\begin{defi}\label{Cartan}
We shall say that the Cartan subgroup of $\GL_2(\Z_\ell)$ associated with $R$ is:
\begin{itemize}
\item \emph{maximal}, if $\ell$ does not divide the index of $R$ in $\mathcal{O}_F$;
\item \emph{split}, if it is maximal and furthermore $\ell$ is split in $F$;
\item \emph{nonsplit}, if it is maximal and furthermore $\ell$ is inert in $F$;
\item \emph{ramified}, if it is neither split nor nonsplit.
\end{itemize}
\end{defi}

Notice in particular that \emph{unramified} means the same as \emph{either split or nonsplit}. Thus a Cartan subgroup is either split, nonsplit or ramified: a Cartan subgroup can be ramified because it is not maximal ($\ell$ divides $[\mathcal{O}_F:R]$), or because $\ell$ ramifies in $F$.
Note that we always understand `maximal'  in the sense of the above definition (in particular, even if a Cartan subgroup is not maximal, it is still the group of $\mathbb Z_{\ell}$-points of a maximal subtorus of $\GL_2$). A proper subgroup of a Cartan subgroup of $\GL_2(\Z_\ell)$ is not a Cartan subgroup in our terminology.

\begin{rem}
A strict inclusion of quadratic rings $R \hookrightarrow S$ over $\Z_\ell$ does not induce an inclusion of Cartan subgroups according to our definition. This is because the multiplication action of $R^\times$ on $R$ (resp.~of $S^\times$ on~$S$) is represented with respect to a $\Z_\ell$-basis of $R$ (resp.~$S$), and the base-change matrix relating a basis of $R$ with a basis of $S$ is not $\ell$-integral. More concretely, write $S=\Z_\ell[\omega]$ and  $R=\Z_\ell[\ell^k\omega]$ for some $k> 0$. Suppose for simplicity that $\ell \neq 2$ and $\omega^2=d \in \mathbb{Z}_\ell$, and consider  the bases $\{1,\omega\}$ and $\{1,\ell^k\omega\}$ of $S, R$ respectively. An element $a+b \ell^k \omega$ (where $a,b\in \Z_\ell$) corresponds to 
$$
\begin{pmatrix}
a & b \ell^{k} d \\ b \ell^k & a
\end{pmatrix}\in C_S(\Z_\ell) \qquad \text{and} \qquad \begin{pmatrix}
a & b \ell^{2k} d \\ b & a
\end{pmatrix} \in C_R(\Z_\ell)\,.$$
One can check that for $b \neq 0$ there is no $\Z_\ell$-integral change of basis relating these two matrices, and a similar conclusion holds for any choice of $\Z_\ell$-bases of $R, S$.
\end{rem}

For a maximal Cartan subgroup we have $R = \mathcal{O}_F$ and for a split Cartan subgroup we have $R \cong \Z_\ell^2$ and hence $C \cong (\Z_\ell^\times)^2$.

\subsection{A classification for quadratic rings}
It is apparent from the previous discussion that classifying the Cartan subgroups of $\GL_2(\Z_\ell)$ up to conjugacy is equivalent to classifying the \emph{quadratic rings} over $\Z_\ell$ (i.e.\@ the orders in integral quadratic $\mathbb{Q}_\ell$-algebras) up to a $\Z_\ell$-linear ring isomorphism. 
A Cartan subgroup is maximal if and only if the corresponding quadratic ring $R$ is the maximal order; a maximal Cartan subgroup is unramified if and only if the corresponding $\mathbb{Q}_\ell$-algebra is \'etale i.e.\@ it is either $\mathbb{Q}_\ell^2$ (in the split case) or the unique unramified quadratic extension of $\mathbb{Q}_\ell$ (in the nonsplit case). We have an \'etale $\mathbb{Q}_\ell$-algebra if and only if the $\ell$-adic valuation on $R$, normalized so that $v_\ell(\ell)=1$, takes integer values.

\begin{thm}{(Classification of quadratic rings)}\label{quadratic-rings}
If $R$ is a quadratic ring over $\Z_\ell$ then there exist a $\Z_\ell$-basis $(1,\omega)$ of $R$ and parameters $(c,d)$ in $\Z_\ell$ satisfying $\omega^2=c\omega+d$ and such that one of the following holds: $c=0$ (and hence $d\neq 0$); $\ell=2$, $c=1$, and $d$ is either zero or odd.
\end{thm}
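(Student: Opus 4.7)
My plan is to start from any $\Z_\ell$-basis $(1,\omega_0)$ of $R$, whose existence follows from $R$ being free of rank $2$ over $\Z_\ell$ (it is finitely generated and torsion-free over a PID) together with the observation that $1\bmod \ell R$ extends to an $\F_\ell$-basis of $R/\ell R$, so Nakayama's lemma produces $\omega_0$. Writing $\omega_0^2 = c_0\omega_0 + d_0$, the most general allowed change of basis has the form $\omega = u\omega_0 + v$ with $u \in \Z_\ell^\times$ and $v \in \Z_\ell$; a direct computation gives $\omega^2 = c\omega + d$ with $c = uc_0 + 2v$ and $d = u^2 d_0 - uvc_0 - v^2$. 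The strategy is to choose $u$ and $v$ so that $(c,d)$ lands in the required form.

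For $\ell$ odd the classical completion of the square suffices: take $u=1$, $v=-c_0/2 \in \Z_\ell$, which produces $c=0$ and $d = d_0 + c_0^2/4$. Because $(1,\omega)$ is a basis we have $\omega \neq 0$, and since $R$ inherits reducedness from $F$, the relation $d = \omega^2$ forces $d \neq 0$.

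For $\ell=2$ I split on the parity of $c_0$. If $c_0$ is even, the same completion of the square with $v = -c_0/2 \in \Z_2$ gives $c=0$ and a nonzero $d$ by the same reducedness argument. If $c_0$ is odd, the shift $v = (1-c_0)/2 \in \Z_2$ produces $c=1$ and $d = d_0 + (c_0^2-1)/4$; since $(c_0^2-1)/4$ is a product of two consecutive integers, $d \equiv d_0 \pmod 2$, so if $d_0$ is odd we are done.

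The remaining case ($\ell=2$, $c_0$ odd, $d_0$ even) is the real obstacle, as the natural shift only yields an even and possibly nonzero $d$. Here the key observation is that the reduction $R/2R$ is isomorphic to $\F_2[x]/(x^2+x) \cong \F_2\times\F_2$ and the discriminant $c_0^2+4d_0 \equiv 1 \pmod 8$ is a square $s^2$ in $\Z_2^\times$, so $R$ is étale over $\Z_2$. This forces $F\cong\Q_2\times\Q_2$ and identifies $R$ with the maximal order $\Z_2\times\Z_2$: indeed $x^2 - c_0 x - d_0$ splits over $\Z_2$ with integer roots $(c_0\pm s)/2$, and one checks that the nontrivial idempotents $(1,0),(0,1)$ are $\Z_2$-linear combinations of $1$ and $\omega_0$ (the coefficients involving $1/s\in\Z_2^\times$). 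Taking $\omega$ to be either of these idempotents yields a $\Z_2$-basis $(1,\omega)$ of $R$ with $\omega^2 = \omega$, i.e.\ $c = 1$ and $d = 0$, completing the classification.
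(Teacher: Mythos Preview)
Your proof is correct and follows essentially the same path as the paper: start from an arbitrary basis $(1,\omega_0)$, complete the square when $2$ is invertible (i.e.\ $\ell$ odd or $c_0$ even), and for $\ell=2$ with $c_0$ odd shift to $c=1$ and analyse the residual even-$d$ case via the splitting of $x^2-c_0x-d_0$ over $\Q_2$.

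The one genuine difference is in that last subcase. You observe directly that $c_0$ odd makes the discriminant $c_0^2+4d_0$ a $2$-adic unit, so $R$ is \'etale over $\Z_2$; combined with $R/2R\cong\F_2\times\F_2$ this forces $R\cong\Z_2\times\Z_2$, and the idempotent gives parameters $(1,0)$. The paper instead writes $R$ as a generic order $\Z_2(1,1)\oplus\Z_2(0,\beta)$ in $\Q_2^2$ and treats the two subcases $\beta$ odd and $\beta$ even separately. Your \'etaleness argument shows that the $\beta$ even branch is in fact vacuous here (any basis $(1,\omega_0)$ of a non-maximal order in $\Q_2^2$ necessarily has $c_0$ even), so your route is slightly tighter. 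You also make explicit two points the paper leaves implicit: the existence of a basis of the form $(1,\omega_0)$ via Nakayama, and the reason $d\neq 0$ when $c=0$ (reducedness of $R$).
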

\begin{proof}
Let $(1,\omega_0)$ be a $\Z_\ell$-basis of $R$ and write $\omega_0^2=c_0\omega_0+d_0$ for some $c_0,d_0 \in \Z_\ell$. If $\ell$ is odd or $c_0$ is even, we set $\omega=\omega_0-c_0/2$ and have parameters $(0,d_0+{c_0^2}/{4})$. If $\ell=2$ and $c_0$ is odd, we set $\omega=\omega_0-{(c_0-1)}/{2}$ and $d_1=d_0+{(c_0^2-1)}/{4}$. If $d_1$ is odd, we are done because we have $\omega^2=\omega+d_1$. If $d_1$ is even, the quadratic equation
$\omega^2=\omega+d_1$ has solutions in $\mathbb{Q}_2$ because its discriminant $1-4d_1 \equiv 1 \pmod 8$ is a square. Thus $R$ is an order in $\mathbb{Q}^2_2$ and hence it is of the form $\mathbb{Z}_2 (1,1) \oplus \mathbb{Z}_2 (0,\beta)$ for some $\beta \in \Z_2$. If $\beta$ is odd, we have $R=\mathbb{Z}^2_2$ so we  set $\omega=(0,1)$ and have parameters $(1,0)$. If $\beta$ is even, we set $\omega=(-{\beta}/{2},{\beta}/{2})$ and have parameters $(0,{\beta^2}/{4})$. \end{proof}

\subsection{Parameters for a Cartan subgroup}\label{subsec-classification}

We call the parameters $(c,d)$ as in Theorem \ref{quadratic-rings}
\emph{parameters for the Cartan subgroup} of $\GL_2(\Z_\ell)$ corresponding to $R$: they are in general not uniquely determined.

\begin{rem}\label{rmk:IntegralParameters}
Since $\Z$ is dense in $\Z_\ell$ we may assume that the parameters $(c,d)$ are integers. Indeed, one can prove that the isomorphism class of the ring $\mathbb{Z}_\ell[x]/(x^2-cx-d)$ is a locally constant function of $(c,d) \in \Z_\ell^2$ (this property is closely related to Krasner's Lemma \cite[Tag 0BU9]{stacks-project}). We also give a direct argument.
Consider first a Cartan subgroup $C$ with parameters $(0,d)$.
If $u$ is an $\ell$-adic unit, $(0,u^2d)$ are also parameters for $C$. Thus $C$ depends on $d$ only through its class in $(\mathbb{Z}_\ell \setminus \{0\})/\mathbb{Z}_\ell^{\times 2}$ (quotient as multiplicative monoids): this is isomorphic to $\mathbb{N} \times \mathbb{Z}_\ell^\times/\Z_\ell^{\times 2}$, where the first factor is the valuation and the second factor is finite (indeed $\Z_\ell^\times/\Z_\ell^{\times 2} \cong \mathbb{F}_\ell^\times/\mathbb{F}_\ell^{\times 2}$ if $\ell$ is odd, and $\Z_2^\times / \Z_2^{\times 2} \cong (\Z/8\Z)^\times$). 
With powers of $\ell$ we can realize every integral valuation (recall that $d$ is an element of $\Z_{\ell}$), and the integers coprime to $\ell$ represent all elements of $\mathbb{Z}_\ell^\times/\mathbb{Z}_\ell^{\times 2}$, thus there is an integer representative.
Now suppose $\ell=2$ and consider a Cartan subgroup with parameters $(1,d)$ where $d$ is odd: in Proposition \ref{prop:ConditionsCD2} we show that the quadratic ring is $\mathbb{Z}_2[\zeta_6]$ and hence we can take as parameters $(1,-1)$. 
\end{rem}

\begin{prop}[Classification of Cartan subgroups for $\ell$ odd]\label{prop:ConditionsCDodd}
Suppose that $\ell$ is odd, and consider a Cartan subgroup of $\GL_2(\Z_\ell)$  with parameters $(0,d)$. It is
maximal if and only if $v_\ell(d)\leqslant 1$. It is unramified if and only if $\ell \nmid d$: it is then split if $d$ is a square in $\Z_\ell^\times$, and nonsplit otherwise.
\end{prop}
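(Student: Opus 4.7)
The plan is to analyze the quadratic $\Q_\ell$-algebra $F=\Q_\ell[\omega]$ and the order $R=\Z_\ell[\omega]$ directly from the defining relation $\omega^2=d$, case-splitting on $v_\ell(d)$ (which is finite since $d\neq 0$ by Theorem \ref{quadratic-rings}).

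First, I would settle when $R$ is maximal. If $v_\ell(d)=0$, then $d\in\Z_\ell^\times$ and $\omega$ is a unit of $R$. Either $d$ is a square in $\Z_\ell^\times$, in which case $F\cong \Q_\ell^2$ with $\mathcal O_F=\Z_\ell^2$, and a direct check (using that $2$ is a unit in $\Z_\ell$ for $\ell$ odd) shows $R=\mathcal O_F$; or $d$ is a non-square unit, in which case $F$ is the unramified quadratic extension of $\Q_\ell$, and $\mathcal O_F=\Z_\ell[\sqrt d]=R$. If $v_\ell(d)=1$, then $\omega$ has valuation $1/2$ in $F$, so it is a uniformizer of the (ramified) quadratic extension $F=\Q_\ell(\sqrt d)$ and again $R=\mathcal O_F$. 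Finally, if $v_\ell(d)\geqslant 2$, write $d=\ell^{2k}d'$ with $k\geqslant 1$; then $\omega/\ell^k\in \mathcal O_F$ satisfies $(\omega/\ell^k)^2=d'$, but $\omega/\ell^k\notin R$ (if $\omega/\ell^k=a+b\omega$ with $a,b\in\Z_\ell$, then $(1-b\ell^k)\omega=a\ell^k$, forcing $\omega\in\Z_\ell$ because $1-b\ell^k$ is a unit, a contradiction). Hence $R\subsetneq\mathcal O_F$ exactly when $v_\ell(d)\geqslant 2$, proving the maximality criterion.

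Next, for the unramified/split/nonsplit trichotomy: by Definition \ref{Cartan}, the Cartan is unramified iff $R=\mathcal O_F$ and $F$ is étale over $\Q_\ell$. Combining with the previous paragraph, this forces $v_\ell(d)=0$; conversely $v_\ell(d)=0$ gives $F\in\{\Q_\ell^2,\text{unramified quadratic extension}\}$, both étale. Within this case, $F\cong \Q_\ell^2$ (the split case) iff the polynomial $x^2-d$ splits over $\Q_\ell$, which for a unit $d$ is equivalent to $d\in (\Z_\ell^\times)^2$; otherwise $F$ is a field and the Cartan is nonsplit.

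The argument is essentially a bookkeeping of valuations and square classes, so no serious obstacle is expected; the one point that deserves care is the case $v_\ell(d)\geqslant 2$, where one must rule out that $R$ could still be maximal inside some smaller $F$, and this is handled by the explicit non-membership argument for $\omega/\ell^k$ given above.
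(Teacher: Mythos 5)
Your proof is correct, and it reaches the same conclusions by a partially different route. The trichotomy part (ramified for $v_\ell(d)\geqslant 1$, split/nonsplit according to the square class of a unit $d$) and the non-maximality for $v_\ell(d)\geqslant 2$ (exhibit $\omega/\ell^k$, integral but not in $R$) coincide with the paper's argument, where the paper simply uses $\omega/\ell$; note your non-membership check can be shortened, since $\omega/\ell^k$ has coordinates $(0,\ell^{-k})$ in the basis $(1,\omega)$ and $\ell^{-k}\notin\Z_\ell$. The real divergence is in proving maximality when $v_\ell(d)\leqslant 1$: you identify $\mathcal{O}_F$ explicitly in each sub-case, invoking standard local-field facts (for $d$ a non-square unit, $\Z_\ell[\sqrt d]=\mathcal{O}_F$ because the extension is unramified and the discriminant $4d$ is a unit; for $v_\ell(d)=1$, a uniformizer generates the ring of integers of a totally ramified extension; for $d$ a square unit, the explicit isomorphism with $\Z_\ell^2$ using that $2$ is invertible). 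The paper instead gives a single self-contained computation valid for all $v_\ell(d)\leqslant 1$: it takes an arbitrary order $R'\supseteq R$, normalizes a basis $(1,\omega_1)$ of $R'$ with $\omega_1^2=d_1\in\Z_\ell$ (reusing Theorem \ref{quadratic-rings}), writes $\omega=a\omega_1+b$, and deduces $b=0$ and $v_\ell(a)=0$ from $v_\ell(d)=2v_\ell(a)+v_\ell(d_1)\leqslant 1$, whence $R'=R$. Your version is shorter where the standard facts are taken for granted, but less self-contained; the paper's avoids any appeal to ramification theory or discriminants and treats the two maximal cases uniformly without first computing $\mathcal{O}_F$.
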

\begin{proof}
If $v_\ell(d)=1$ then $v_\ell(\omega)=1/2$ and hence $F$ is  a ramified extension of $\mathbb{Q}_\ell$.  If $v_\ell(d) \geqslant 2$ then $C$ is not maximal because $(\omega/ \ell)^{2}={d}/ \ell^{2} \in \Z_\ell$ and hence ${\omega}/{\ell}$ is in $\Z_{\ell}$. If $v_\ell(d) \leqslant 1$ then $R$ is a maximal order.  Indeed, let $R'$ be an order in $F$ containing $R$ and choose a $\Z_\ell$-basis $(1,\omega_1)$ of $R'$ satisfying $\omega_1^2=d_1 \in \Z_\ell$: writing $\omega=a \omega_1+b$ for some $a,b \in \Z_\ell$, we have $$d=\omega^2=(a^2 d_1+b^2)\cdot 1+(2ab)\cdot \omega_1$$ which implies $b=0$, thus $v_\ell(d)=2v_\ell(a)+v_\ell(d_1)$ and hence $v_\ell(a)=0$ and $R'=R$.

Now suppose $v_\ell(d)=0$.
 If $d$ is not a square, then $F=\mathbb{Q}_\ell(\sqrt{d})$ is an unramified extension of $\mathbb{Q}_\ell$ while if $d$ is a square the map $a + b \omega \mapsto (a + b \sqrt{d},a - b \sqrt{d})$ identifies $R$ and $\mathbb{Z}_\ell^2$.
\end{proof}

\begin{prop}[Classification of Cartan subgroups for $\ell=2$]\label{prop:ConditionsCD2}
Suppose that $\ell=2$, and consider a Cartan subgroup of $\GL_2(\Z_\ell)$  with parameters $(c,d)$.
 It is unramified if and only if $c=1$: it is then split for $d=0$ and nonsplit for $d$ odd. It is maximal and ramified if and only if $c=0$ and either $v_2(d)=1$ or $v_2(d)=0$ and $d \equiv 3 \pmod 4$.
\end{prop}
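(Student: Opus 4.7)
The plan is to treat separately the admissible parameter pairs $(c,d)$ singled out by Theorem \ref{quadratic-rings}: either $c = 0$ with $d \neq 0$, or $c = 1$ with $d$ equal to $0$ or odd. In each sub-case I would identify the quadratic $\mathbb{Q}_2$-algebra $F = \mathbb{Q}_2 R$, check whether $R$ coincides with its maximal order $\mathcal{O}_F$, and then read off where the Cartan falls in the classification of Definition \ref{Cartan}.

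First I would settle the unramified direction, corresponding to $c = 1$. If $d = 0$, the relation $\omega^2 = \omega$ makes $\omega$ a nontrivial idempotent, so by CRT $R \cong \mathbb{Z}_2 \times \mathbb{Z}_2$ and the Cartan is split. If $d$ is odd, the discriminant of the minimal polynomial $X^2 - X - d$ is $1 + 4d \equiv 5 \pmod{8}$, which represents the unique nontrivial class in $\mathbb{Z}_2^\times/\mathbb{Z}_2^{\times 2}$ giving an unramified quadratic extension. Hence $F = \mathbb{Q}_2(\sqrt{1+4d})$ is unramified over $\mathbb{Q}_2$, and since the minimal polynomial of $\omega$ has unit discriminant, $\omega$ generates $\mathcal{O}_F$; thus $R = \mathcal{O}_F$ and the Cartan is nonsplit.

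Next, the case $c = 0$, where $\omega^2 = d$. Here the Cartan can never be unramified in our sense (either because $F$ will turn out to be a ramified extension of $\mathbb{Q}_2$, or because $R$ fails to be maximal inside the étale algebra $F$), so the task is to detect when $R = \mathcal{O}_F$. The analysis proceeds by $v_2(d)$. If $v_2(d) \geq 2$, then $(\omega/2)^2 = d/4 \in \mathbb{Z}_2$, so $\omega/2$ is integral over $\mathbb{Z}_2$ and $R$ is properly contained in a larger order, hence not maximal. If $v_2(d) = 1$, then $\omega$ has valuation $1/2$ in $F$, so $F$ is a ramified quadratic extension and $\omega$ is a uniformizer, which forces $R = \mathcal{O}_F$. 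If $v_2(d) = 0$, I would split according to $d \bmod 8$: for $d \equiv 1 \pmod 8$, $d$ is a square so $F = \mathbb{Q}_2^2$ and $R$ sits as a proper suborder (of index $2$) in $\mathbb{Z}_2^2$; for $d \equiv 5 \pmod 8$, the extension $F$ is the unramified quadratic one and $(1+\omega)/2$ lies in $\mathcal{O}_F \setminus R$; for $d \equiv 3 \pmod 4$, a trace--norm computation shows $R = \mathcal{O}_F$.

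The delicate step is this last verification. One has to show that for $d \equiv 3 \pmod{4}$ no element $\alpha = (a + b\omega)/2^k$ with $k \geq 1$ and $a,b \in \mathbb{Z}_2$ not both even can be integral over $\mathbb{Z}_2$. The trace of $\alpha$ equals $a/2^{k-1}$, so integrality forces $v_2(a) \geq k-1$, and combined with the integrality of the norm $(a^2 - b^2 d)/4^k$ this reduces, case by case on $k$, to the single surviving case $k=1$ with $a,b$ both odd and the congruence $v_2(a^2 - b^2 d) \geq 2$. Since $a^2 \equiv b^2 \equiv 1 \pmod 8$, this becomes $1 - d \equiv 0 \pmod 4$, which fails exactly when $d \equiv 3 \pmod 4$. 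This pins down the maximality of $R$ in the remaining subcases and completes the classification.
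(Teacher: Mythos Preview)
Your proof is correct and follows the same overall case-by-case skeleton as the paper, but your execution is more self-contained in two places. For $c=1$ with $d$ odd, the paper argues via separability over $\mathbb{F}_2$ that $R$ lies in $\mathbb{Q}_2(\zeta_6)$ and then shows $\zeta_6 \in R$ by an explicit coefficient computation; your discriminant argument ($1+4d \equiv 5 \pmod 8$, hence $R=\mathcal{O}_F$ since the discriminant is a unit) is cleaner and avoids that detour. For $c=0$, the paper reduces via Remark~\ref{rmk:IntegralParameters} to a finite list of representatives $d \in \{1,3,5,7,2,6,10,14\}$ and then simply appeals to ``known'' facts about the index $[\mathcal{O}_{\mathbb{Q}_2(\sqrt{d})}:\mathbb{Z}_2[\sqrt{d}]]$; your trace--norm computation for $d \equiv 3 \pmod 4$ actually supplies that missing verification, and your exhibition of $(1+\omega)/2$ when $d \equiv 5 \pmod 8$ (and the proper embedding in $\mathbb{Z}_2^2$ when $d \equiv 1 \pmod 8$) makes the non-maximality explicit. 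The paper's approach buys brevity at the cost of invoking outside facts; yours is longer but fully self-contained, which is arguably preferable for a classification result of this elementary flavour.
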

\begin{proof}
We keep the notation of the previous proof.  If $c=1$ and $d=0$ then $a+b\omega \mapsto (a,a+b)$ is an isomorphism between $R$ and $\Z_2^2$, so that $C$ is split. If $c=1$ and $d$ is odd then up to isomorphism we may suppose  $\omega^2=\omega+d$. This equation is separable over $\Z/2\Z$,  so $R$ is contained in the unique unramified quadratic extension of $\mathbb{Q}_2$, which is $\mathbb{Q}_2(\zeta_6)$. Since $2$ is inert, we will have shown that $C$ is nonsplit once we prove $R=\mathbb{Z}_2[\zeta_6]$. To show $\zeta_6\in R$, we write $\omega=a+b\zeta_6$ (with $a,b \in \mathbb{Z}_2$) and prove that $b$ is a unit: the equation $\omega^2=\omega+d$ gives $$(a^2-b^2)+\zeta_6(b^2+2ab)=(a+d)+\zeta_6b,$$ so $a+d$ has the same parity as $a^2-b^2$ and we deduce that $b$ is odd.

Conversely, if $C$ is unramified then no $\Z_2$-basis $(1,\omega)$ of $R$ satisfies $\omega^2 \in \Z_2$ and we must have $c=1$. 
Since $t^2=t+d$ has no solutions in $\Z_2^2$ for $d$ odd while it has solutions in $\mathbb{Q}_2$ if $d=0$, we deduce that $d=0$ (resp. $d$ is odd) for a split (resp. nonsplit) Cartan subgroup.
If $c=1$ we have seen that $C$ is maximal, so suppose $c=0$ and hence $\omega^2=d \in \Z_2$. Analogously to the previous remark we have $v_2(d) \leqslant 1$ if $C$ is maximal. 
By Remark \ref{rmk:IntegralParameters} we only need to consider those $d$ in $(\mathbb{Z}_2 \setminus\{0\})/ \mathbb{Z}_2^{\times 2} \cong (\mathbb{N} \times \mathbb{Z}_2^\times)/ \mathbb{Z}_2^{\times 2}$ with valuation $0$ or $1$, namely
$d=1,3,5,7$ and $d=2, 6, 10, 14$. We may conclude because it is known whether $\Z_2[\sqrt{d}]$ has index $1$ or $2$ in the ring of integers of $\Q_2(\sqrt{d})$, where for $d=1$ we set $\sqrt{d}=(1,-1)\in \mathbb{Z}_2^2$.
\end{proof}

\subsection{A concrete description of Cartan subgroups}
Let $(c,d)$ be parameters as in Theorem \ref{quadratic-rings}; we can then give a precise description for $C_R:=\operatorname{Res}_{R/\Z_\ell}(\mathbb{G}_m) \subset \GL_{2, \mathbb{Z}_\ell}$ as follows. For every $\mathbb{Z}_\ell$-algebra $A$, the $A$-points of $C_R$ are the subgroup of $\GL_{2, \mathbb{Z}_\ell}(A)$ given by:
\begin{equation*}
C_R(A)=\left\{ \begin{pmatrix}
x & dy \\ y & x+yc 
\end{pmatrix} : x,y \in A,\; \det \begin{pmatrix}
x & dy \\ y & x+yc 
\end{pmatrix} \in A^\times \right\}\,.
\end{equation*}

In particular the Cartan subgroup $C=C_R(\Z_\ell) $ is the set
\begin{equation}\label{nfaa}
C=\left\{ \begin{pmatrix}
x & dy \\ y & x+yc 
\end{pmatrix} : x,y \in \Z_{\ell},\; v_{\ell}(x(x+yc)-dy^2)=0 \right\}\,.
\end{equation}

\begin{rem}[Diagonal model for a split Cartan subgroup]\label{rmk:DiagonalModelWorks}
For the parameters $(c,d)$ of a split Cartan subgroup $C$ we have shown: if $\ell$ is odd, we have $c=0$ and $d$ is a square in $\Z_\ell^\times$;  if $\ell=2$, we have $(c,d)=(1,0)$. We deduce the existence of an isomorphism between $C$ and the group of diagonal matrices in $\GL_2(\mathbb Z_{\ell})$:
\begin{equation}\label{eq:SplitCartan}
\left\{ \begin{pmatrix}
X& 0 \\ 0 & Y 
\end{pmatrix} :\, X,Y \in \mathbb Z_{\ell}^\times
\right\} \, .
\end{equation}
We can define such an isomorphism for $\ell$ odd and for $\ell=2$ respectively as:
\begin{equation}\label{phil}
\varphi_{\ell} : \begin{pmatrix}
x & dy \\ y & x 
\end{pmatrix} \mapsto \begin{pmatrix}
x-y\sqrt{d} & 0 \\ 0 & x+y \sqrt{d}
\end{pmatrix}
\qquad 
\varphi_2 : \begin{pmatrix}
x & 0 \\ y & x+y 
\end{pmatrix} \mapsto \begin{pmatrix}
x & 0 \\ 0 & x+ y
\end{pmatrix}\,.
\end{equation}
We have $\det(\varphi_\ell(M)-I)=\det(M-I)$ and for any $n\geqslant 1$ we have $\varphi_\ell(M)-I\equiv 0 \bmod (\ell^n)$ if and only if $M-I\equiv 0 \bmod (\ell^n)$.
\end{rem}

{\bf Notation.} For a subset $X$ of $\GL_2(\Z_\ell)$ we denote by $X(n)$ the image of $X$ in $\GL_2(\Z/\ell^n\Z)$.

\begin{lem}\label{lemma:CardinalityCartans}
If $C$ is a Cartan subgroup of $\GL_2(\mathbb Z_{\ell})$ we have
$$\# C(1)= \left\{ 
\begin{array}{ll}
(\ell-1)^2 & \text{if $C$ is split}\\
(\ell-1)\cdot (\ell+1) & \text{if $C$ is nonsplit}\\
(\ell-1)\cdot \ell & \text{if $C$ is ramified}\\
\end{array}\right.$$
and for any $n\geqslant 1$ we have $\# C(n)=\# C(1) \cdot  \ell^{2n-2}$. 
\end{lem}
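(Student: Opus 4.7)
The plan is to identify $C$ with the unit group $R^\times$ of its associated $\Z_\ell$-order $R$, so that for every $n\geqslant 1$ the reduction $C(n)\subset \GL_2(\Z/\ell^n\Z)$ coincides with the unit group of the finite ring $R/\ell^n R$: a matrix representing left multiplication by $u\in R$ (with respect to the basis $\{1,\omega\}$) lies in $\GL_2(\Z/\ell^n\Z)$ if and only if $u$ is a unit in $R/\ell^n R$. Hence $\#C(n)=\#(R/\ell^n R)^\times$, and the problem reduces to computing this cardinality.

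First I would compute $\#C(1)=\#(R/\ell R)^\times$ by identifying $R/\ell R$ as one of the three commutative $\F_\ell$-algebras of dimension $2$. Writing $R=\Z_\ell[\omega]$ with $\omega^2=c\omega+d$ as in Theorem \ref{quadratic-rings}, we have $R/\ell R \cong \F_\ell[x]/(x^2-\bar c x-\bar d)$, and inspection of the parameters listed in Propositions \ref{prop:ConditionsCDodd} and \ref{prop:ConditionsCD2} shows: in the split case $x^2-\bar c x-\bar d$ has two distinct roots in $\F_\ell$, so $R/\ell R\cong \F_\ell\times \F_\ell$ has $(\ell-1)^2$ units; in the nonsplit case the polynomial is irreducible over $\F_\ell$, so $R/\ell R\cong \F_{\ell^2}$ has $\ell^2-1=(\ell-1)(\ell+1)$ units; in the ramified case the polynomial has a repeated root, so $R/\ell R\cong \F_\ell[\epsilon]/(\epsilon^2)$ has $\ell(\ell-1)$ units. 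As a direct sanity check, one may instead use the concrete description \eqref{nfaa} and count the pairs $(x,y)\in \F_\ell^2$ with $x(x+cy)-dy^2 \not\equiv 0\pmod \ell$, obtaining the same three values.

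For the inductive step, the reduction map $R/\ell^{n+1}R \to R/\ell^nR$ has kernel $\ell^nR/\ell^{n+1}R$, a free $\F_\ell$-module of rank $2$ and hence of size $\ell^2$. For $n\geqslant 1$, an element of $R/\ell^{n+1}R$ is a unit if and only if its image in $R/\ell R$ is a unit (Hensel lifting), so every lift of a unit in $(R/\ell^nR)^\times$ lies in $(R/\ell^{n+1}R)^\times$. This gives exactly $\ell^2$ lifts of each unit at level $n$, hence $\#C(n+1)=\ell^2\cdot \#C(n)$ and the formula $\#C(n)=\#C(1)\cdot \ell^{2(n-1)}$ follows by induction.

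The only case requiring care is the ramified one, which by Definition \ref{Cartan} covers both non-maximal $\Z_\ell$-orders in étale quadratic algebras and maximal orders in ramified extensions of $\Q_\ell$. In every such sub-case the admissible parameters — $(c,d)=(0,d)$ with $\ell\mid d$ when $\ell$ is odd, and $c=0$ (with $d$ of any valuation) when $\ell=2$ — force $x^2-\bar c x-\bar d$ to acquire a double root mod $\ell$, so that $R/\ell R$ is uniformly the nilpotent algebra $\F_\ell[\epsilon]/(\epsilon^2)$. Apart from this routine verification the proof is a direct computation.
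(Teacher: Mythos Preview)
Your proof is correct and more explicit than the paper's. The paper's own argument is very terse: for $n=1$ it simply says ``a straightforward computation'', and for $n>1$ it invokes the higher-dimensional Hensel's Lemma applied to the Zariski closure of $C$ in $\GL_{2,\Z_\ell}$, which is smooth of relative dimension $2$, so that each point of $C(n)$ has exactly $\ell^2$ lifts to $C(n+1)$.

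Your route is genuinely different in flavour: you stay on the ring side, identifying $C(n)$ with $(R/\ell^n R)^\times$ and using the elementary fact that an element of $R/\ell^{n+1}R$ is a unit if and only if its image in $R/\ell R$ is. This avoids any appeal to smoothness of schemes or to the cited multivariable Hensel lemma, and it makes the $n=1$ step transparent by classifying the three possible $2$-dimensional $\F_\ell$-algebras. What the paper's approach buys is generality (the same sentence would work for any smooth affine group scheme over $\Z_\ell$); what yours buys is self-containment and a clear reason why the factor is $\ell^2$ rather than some scheme-theoretic dimension count. Both arguments are ultimately the same Hensel-type lifting, just packaged differently.
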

\begin{proof}
The assertion for
 $n=1$ is a straightforward computation, while for $n>1$ it follows from the (higher-dimensional version of) Hensel's Lemma \cite[Proposition 7.8]{Nekovar} because the Zariski closure of $C$ in  $\GL_{2,\mathbb Z_{\ell}}$ is smooth of relative dimension $2$.
\end{proof}

\subsection{Normalizers of Cartan subgroups}

\begin{lem}\label{lem-Norm}
A Cartan subgroup of $\GL_2(\Z_\ell)$ has index 2 in its normalizer. If $C$ is as in \eqref{nfaa}, its normalizer $N$ in $\GL_2(\Z_\ell)$ is the disjoint union of $C$ and  
$$C':=\begin{pmatrix}
1 & c \\ 0 & -1
\end{pmatrix} \cdot C\,.$$ 
We have instead $C':=\begin{pmatrix}
0 & 1 \\ 1 & 0
\end{pmatrix}\cdot C$ for a split Cartan subgroup as in \eqref{eq:SplitCartan}.
\end{lem}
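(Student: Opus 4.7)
The approach is to reduce the claim to the classical description of the normalizer of a maximal torus. Since $C=R^\times$ is an $\ell$-adically open subgroup of $F^\times = \mathcal{T}(\Q_\ell)$ and the Weyl group of a maximal torus in $\GL_2$ has order $2$, one expects $[N:C]=2$, with the non-trivial coset realizing the unique non-trivial $\Q_\ell$-algebra involution $\sigma$ of $F$; concretely, $\sigma$ sends the generator $\omega = \begin{pmatrix} 0 & d \\ 1 & c \end{pmatrix}$ of $R$ over $\Z_\ell$ to $c-\omega$.

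The crux is the commutant identity
\[
F^\times \cap \GL_2(\Z_\ell) \;=\; R^\times \;=\; C,
\]
which I would verify by direct inspection of \eqref{nfaa}: any element of $F \subseteq M_2(\Q_\ell)$ has the form $xI + y\omega$ with $x,y \in \Q_\ell$, and such a matrix lies in $M_2(\Z_\ell)$ precisely when $x,y \in \Z_\ell$, i.e.\ when it is the image of an element of $R$. This yields the reduction $N = \GL_2(\Z_\ell) \cap N_{\GL_2(\Q_\ell)}(F^\times)$: if $g \in \GL_2(\Z_\ell)$ normalises $F^\times$, then $gCg^{-1} \subseteq F^\times \cap \GL_2(\Z_\ell) = C$, and symmetrically for $g^{-1}$; conversely, $C$ spans $F$ as a $\Q_\ell$-vector space (it contains $I$ together with some matrix of the form in \eqref{nfaa} having a unit in the lower-left entry), so any $g$ normalising $C$ conjugates the $\Q_\ell$-span $F$ to itself and hence normalises $F^\times$.

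Given this reduction, the canonical map $N/C \hookrightarrow N_{\GL_2(\Q_\ell)}(F^\times)/F^\times \cong \Z/2\Z$ is injective, so $[N:C] \leqslant 2$. A short computation shows that $P := \begin{pmatrix} 1 & c \\ 0 & -1 \end{pmatrix}$ has determinant $-1 \in \Z_\ell^\times$ and satisfies $P\omega P^{-1} = c-\omega = \sigma(\omega)$; hence $PRP^{-1}=R$ and $P$ normalises $C = R^\times$. Since conjugation by $P$ induces $\sigma \neq \mathrm{id}$ on $F$, the matrix $P$ is not in $F^\times$, so $P \notin C$, and we conclude $N = C \sqcup PC$. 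For the split diagonal model \eqref{eq:SplitCartan}, the identical argument applies with the swap matrix $\begin{pmatrix} 0 & 1 \\ 1 & 0 \end{pmatrix}$ (also of determinant $-1$) playing the role of $P$. The only step requiring care is the commutant identity; beyond that, the lemma is a formal consequence of the standard Weyl-group picture.
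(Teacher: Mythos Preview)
Your proof is correct and takes a more conceptual route than the paper's. The paper argues by direct computation: after verifying that $P=\begin{pmatrix}1&c\\0&-1\end{pmatrix}$ normalises $C$, it takes an arbitrary $A\in N$, writes $A\omega A^{-1}=\begin{pmatrix}x&yd\\y&x+yc\end{pmatrix}$ for some $x,y\in\Z_\ell$, and uses trace and determinant comparisons to pin down the two possible values of $(x,y)$; for each value it then solves $A\omega=\begin{pmatrix}x&yd\\y&x+yc\end{pmatrix}A$ explicitly and finds $A\in C\cup PC$. You instead pass to $\GL_2(\Q_\ell)$, establish the commutant identity $F^\times\cap\GL_2(\Z_\ell)=C$, and deduce an injection $N/C\hookrightarrow N_{\GL_2(\Q_\ell)}(F^\times)/F^\times\cong\Z/2\Z$, realising the nontrivial coset via the involution $\omega\mapsto c-\omega$. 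This is essentially the viewpoint the authors record in the remark immediately following the lemma (``the previous lemma also follows from the fact that any maximal torus in $\GL_2$ has index~$2$ in its normalizer''), which they chose to mention as an alternative rather than as the proof itself. Your argument is cleaner and explains \emph{why} the index is $2$; the paper's computation is entirely self-contained and avoids invoking facts about maximal tori or $\Aut_{\Q_\ell}(F)$.
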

\begin{proof}
An easy verification shows $C'\subset N$. If $A\in N$, there exist $x, y \in \Z_\ell$ such that we have 
\begin{equation*}A \begin{pmatrix}
0 & d \\ 1 & c
\end{pmatrix} A^{-1} = \begin{pmatrix}
x & yd \\ y & x+yc
\end{pmatrix}.
\end{equation*}
If $c=0$, by comparing traces we find $x=0$ and hence by comparing determinants  we have $(x,y) = (0,\pm 1)$. If $\ell=2$ and $c=1$, by comparing traces we find $y=1-2x$ and hence by comparing determinants we have $-x^2+x=0$, so $(x,y)$ is either $(0,1)$ or $(1,-1)$. We compute
\begin{equation*}A \begin{pmatrix}
0 & d \\ 1 & c
\end{pmatrix} = \begin{pmatrix}
x & yd \\ y & x+yc
\end{pmatrix} A
\end{equation*}
for any explicit value of $(x,y)$ as above, finding in each case $A \in C \cup C'$.
The last assertion about a split Cartan subgroup is well-known and easy to prove.
\end{proof}

\begin{rem}
If one considers a Cartan subgroup of $\GL_2(\Z_\ell)$  as the $\Z_\ell$-valued points of a maximal torus of $\GL_2$, the previous lemma also follows from the fact that any maximal torus in $\GL_2$ has index 2 in its normalizer (the Weyl group of $\GL_2$ is $\Z/2\Z$).\end{rem}

\begin{lem}\label{lemma:NormalizerAIsZero}
If $C$ is as in \eqref{nfaa} and $N$ is its normalizer then we have
\begin{equation}\label{eqcomplement}
N\setminus C =\Big\{\begin{pmatrix}
z & -dw+cz \\ w & -z
\end{pmatrix} \quad \text{
with $z,w\in \Z_{\ell}$\; and\; $v_{\ell}(-z^2+dw^2-czw)=0$\Big\}\,.}
\end{equation}
Consider $M \in N \setminus C$. If $\ell$ is odd, we have $M \not \equiv I \pmod{\ell}$; if $\ell=2$, we have $M \not \equiv I \pmod{4}$, and if $C$ is unramified we also have $M \not \equiv I \pmod{2}$.
\end{lem}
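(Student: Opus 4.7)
The plan for the first assertion is to start from the description $N\setminus C=\begin{pmatrix}1 & c \\ 0 & -1\end{pmatrix}\cdot C$ given by Lemma~\ref{lem-Norm} and compute the product with a generic element of $C$ written as in \eqref{nfaa}. Multiplying out yields
\[
\begin{pmatrix}1 & c \\ 0 & -1\end{pmatrix}\begin{pmatrix} x & dy \\ y & x+cy\end{pmatrix}=\begin{pmatrix} x+cy & dy+c(x+cy) \\ -y & -(x+cy)\end{pmatrix},
\]
so the natural substitution $z:=x+cy$, $w:=-y$ (which is invertible over $\Z_\ell$) yields exactly the shape in \eqref{eqcomplement}. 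The determinant of this matrix is $-z^2+dw^2-czw$, and the unit condition transports directly from the corresponding condition on $\det\begin{pmatrix}x & dy \\ y & x+cy\end{pmatrix}$ in \eqref{nfaa}, since left-multiplication by $\begin{pmatrix}1&c\\0&-1\end{pmatrix}\in\GL_2(\Z_\ell)$ preserves $\ell$-integrality of the determinant.

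For the second assertion, the idea is simply to read off the diagonal entries. If $M\in N\setminus C$ is congruent to $I$ modulo $\ell^k$, then both $z\equiv 1\pmod{\ell^k}$ and $-z\equiv 1\pmod{\ell^k}$, hence $2z\equiv 0\pmod{\ell^k}$. For $\ell$ odd this forces $z\equiv 0\pmod\ell$, contradicting $z\equiv 1\pmod\ell$, so $M\not\equiv I\pmod\ell$. For $\ell=2$ the same comparison modulo $4$ gives $z\equiv 1$ and $z\equiv -1\pmod 4$, an outright contradiction, so $M\not\equiv I\pmod 4$.

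Finally, assume $\ell=2$ and $C$ is unramified, so by Proposition~\ref{prop:ConditionsCD2} we have $c=1$. If $M\equiv I\pmod 2$, then $w\equiv 0$ and $z\equiv 1\pmod 2$, but the $(1,2)$-entry $-dw+cz=z-dw$ must also be even, forcing $z\equiv dw\equiv 0\pmod 2$, a contradiction.

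The only potential pitfall is bookkeeping: one must be careful that the parametrisation $(z,w)$ really ranges over all of $\Z_\ell^2$ (it does, because the change of variables is $\Z_\ell$-linear and invertible) and that the unit condition matches up on the two sides, but both are immediate from the computation above. No genuinely hard step arises.
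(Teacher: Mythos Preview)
Your proof is correct and follows essentially the same approach as the paper: the first assertion is obtained by multiplying out the description of $N\setminus C$ from Lemma~\ref{lem-Norm}, and the second by noting that the diagonal entries $z,-z$ (equivalently, the trace being zero) obstruct congruence to $I$, with the $(1,2)$-entry handling the unramified $\ell=2$ case via $c=1$. The paper is simply terser, invoking ``trace zero'' directly rather than spelling out the diagonal comparison.
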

\begin{proof}
The first assertion follows from the previous lemma and \eqref{nfaa}. 
Since $M$ has trace zero, we have $M \not \equiv I \pmod{\ell}$ for $\ell$ odd and  $M \not \equiv I \pmod{4}$ for $\ell=2$. If $\ell=2$ and $C$ is unramified we know $c=1$ thus $M \equiv I \pmod{2}$ is impossible.
\end{proof}

\begin{rem}\label{nor}
By comparing \eqref{nfaa} and \eqref{eqcomplement}, we see that the sets $C(n)$ and $(N\setminus C) (n)$ are disjoint for $n\geqslant 2$ (if $\ell$ is odd or $C$ is unramified, for $n\geqslant 1$). By Lemma \ref{lem-Norm} we then have $\# N(n)=2 \cdot \# C(n)$.
\end{rem}

\subsection{The tangent space of a Cartan subgroup}\label{subsec-Lie}

Let $G$ be an open subgroup of either $\GL_2(\Z_\ell)$ or of the normalizer of a Cartan subgroup of $\GL_2(\Z_\ell)$. Then $G$ is an $\ell$-adic manifold, and there is a well-defined notion of a tangent space $T_{I}G$ at the identity. This is a $\Q_\ell$-vector subspace of $\operatorname{Mat}_2(\Q_\ell)$, of dimension equal to the dimension of $G$ as a manifold (in particular, if $G=\GL_2(\Z_\ell)$ we have $T_{I}G =\operatorname{Mat}_2(\Q_\ell)$). For our lifting questions, however, we are more interested in the `mod-$\ell$' tangent space, which can be defined either as the reduction modulo $\ell$ of the intersection of $T_{I}G$ with $\operatorname{Mat}_2(\Z_\ell)$, or as the tangent space to the modulo-$\ell$ fiber of the Zariski closure of $G$ in $\GL_{2,\Z_\ell}$. More concretely, the next two definitions describe the tangent space explicitly:

\begin{defi} If $C$ is as in \eqref{nfaa}, its tangent space is 
\[
\mathbb{T}:=\Big\{\begin{pmatrix}
x & dy \\ y & x+cy
\end{pmatrix} \, : \, x,y \in \Z/\ell \Z \Big\}
\]
where $(c,d)$ are here the reductions modulo $\ell$ of the parameters of $C$.
Write $\mathbb{T}^\times=C(1)$ for the subset of $\mathbb{T}$ consisting of the invertible matrices.
\end{defi}

We clearly have $\#\mathbb{T}=\ell^2$ and by Lemma \ref{lemma:CardinalityCartans} we also know $\#\mathbb{T}^\times$. So we get:
\begin{equation}\label{eqprop:XB}
\begin{tabular}{|c|c|c|c|}
\hline
{Type of $C$}  &$\#\mathbb{T}$ & $\#\mathbb{T}^\times$ & $\#\mathbb{T}-\#\mathbb{T}^\times-1$ \\ \hline
split & $\ell^2$ & $(\ell-1)^2$ & $2(\ell-1)$ \\ \hline
nonsplit & $\ell^2$ & $\ell^2-1$ & $0$ \\ \hline
ramified & $\ell^2$ & $\ell(\ell-1)$ & $\ell-1$ 
\\ \hline
\end{tabular}
\end{equation}

We define the tangent space of an open subgroup of the normalizer of $C$ as the tangent space of $C$. We also define the tangent space of an open subgroup of $\GL_2(\Z_\ell)$ as follows:

\begin{defi}\label{LieGL}
Let $G$ be an open subgroup of $\GL_2(\Z_\ell)$. The tangent space of $G$ is $\mathbb{T}:=\operatorname{Mat}_2(\Z/\ell \Z)$ and we write $\mathbb{T}^\times=\GL_2(\Z/\ell \Z)$.
\end{defi}

For $\GL_2(\Z_\ell)$ we have  $\#\mathbb{T}=\ell^4$, $\#\mathbb{T^\times}=\ell (\ell-1)^2(\ell+1)$ and $\#\mathbb{T}-\#\mathbb{T}^\times-1=(\ell+1)(\ell^2-1)$.

\section{The group structure of the $1$-Eigenspace}\label{sec-1ES}

\subsection{The level} Let $G'$ be either $\GL_2(\mathbb{Z}_\ell)$ or the normalizer of a Cartan subgroup of $\GL_2(\mathbb{Z}_\ell)$. Let $G$ be an open subgroup of $G'$ of finite index $[G':G]$. Call $G'(n)$ and $G(n)$ the reductions of $G'$ and $G$ modulo $\ell^n$, that is their respective images in $\GL_2(\Z/\ell^n\Z)$.

If $n$ is the smallest positive integer such that we have $[G'(n):G(n)]=[G':G]$, we  define the \emph{level} $n_0$ of $G$ as
\[
n_0 = \begin{cases} \begin{array}{ll}
\max\{n,2\} & \text{if } \ell = 2 \text{ and $G'$ is the normalizer of a ramified Cartan} \\ n & \text{otherwise.} \end{array} \end{cases}
\]

\begin{rem}\label{rem:Level}
It is easy to check that all our statements involving the notion of \emph{level} remain true if $n_0$ is replaced by any larger integer. 
\end{rem}

All matrices in $G'$ that are congruent to the identity modulo $\ell^{n_0}$ belong to $G$. In other words, $G$ is the inverse image of $G{(n_0)}$ for the reduction map $G' \to G'(n_0)$.
Consequently we have 
\begin{equation}\label{n0-forH}
[G'(n):G(n)]=[G':G]\qquad \text{for every $n\geqslant n_0$\,.}
\end{equation}
The dimension of $G'$ is $4$ if $G'=\GL_2(\Z_\ell)$ and is $2$ otherwise, and we have
\begin{equation}\label{dimG}
[G(n+1):G(n)]=[G'(n+1):G'(n)]=\ell^{\dim G'}\qquad \text{for every $n\geqslant n_0$\,.}
\end{equation}

\begin{rem}\label{kerT}
Let $G$ be an open subgroup of either $\GL_2(\Z_\ell)$ or the normalizer of a  Cartan subgroup of $\GL_2(\Z_\ell)$. Let $n_0$ be the level of $G$.
For any $n\geqslant n_0$ the map $M \mapsto \ell^{-n}(M-I)$ identifies the tangent space of $G$ with the kernel of $G(n+1) \to G(n)$. This is immediate for $\GL_2(\Z_\ell)$, and for Cartan subgroups it follows from \eqref{nfaa}. The assertion also holds for normalizers of Cartan subgroups  because by Lemma \ref{lemma:NormalizerAIsZero} all matrices reducing to the identity in $G(n)$ are contained in the Cartan subgroup.
\end{rem}

\subsection{The $1$-Eigenspace}

We identify an element of $\GL_2(\mathbb Z_{\ell})$ with an automorphism of the direct limit in $n$ of $(\Z/\ell^n \Z)^2$. For all integers $a,b\geqslant 0$, if $X\subseteq \GL_2(\mathbb Z_{\ell})$ we define
\begin{equation}
\mathcal M_{a,b}(X):=\{M\in X: \, \ker (M-I) \simeq  \Z/{\ell^a}\Z\times \Z/{\ell^{a+b}}\Z\}
\end{equation}
and call $\mathcal M_{a,b}(X;n)$ the reduction of $\mathcal M_{a,b}(X)$ modulo $\ell^n$. To ease notation, we write $\mathcal M_{a,b}:=\mathcal M_{a,b}(G)$ and $\mathcal M_{a,b}(n):=\mathcal M_{a,b}(G;n)$.

We consider the normalized Haar measure on $G$ and call $\mu_{a,b}$ the measure of the set $\mathcal M_{a,b}$. Since $\mathcal M_{a,b}(n)$ is a subset of $G(n)$, we may consider its measure $$\mu_{a,b}(n):=\#\mathcal M_{a,b}(n)/\#G(n)\,.$$
The sets $\mathcal{M}_{a,b}$ are pairwise disjoint, but the same is not necessarily true for the sets $\mathcal{M}_{a,b}(n)$. 
The sequence $\mu_{a,b}(n)$ is constant for $n>a+b$: this shows that $\mu_{a,b}$ is well-defined and that we have $\mu_{a,b}=\mu_{a,b}(n)$ for every $n>a+b$.

\begin{rem}\label{vuoto}
We clearly have $\mathcal M_{a,b}=G\cap \mathcal M_{a,b}(G')$. Moreover, we have
$$\mathcal M_{a,b}=\emptyset \qquad \Leftrightarrow\qquad  G(n_0)\cap \mathcal M_{a,b}(G'; n_0)=\emptyset\,.$$
Indeed we know $\mathcal M_{a,b}(n_0)\subseteq G(n_0)\cap \mathcal M_{a,b}(G';n_0)$ so if the latter is empty so is $\mathcal{M}_{a,b}$.
Conversely, matrices in $\mathcal M_{a,b}(G')$ whose reduction modulo $\ell^{n_0}$ lies in $G(n_0)$ are in $\mathcal M_{a,b}$ because $G$ is the inverse image in $G'$ of $G(n_0)$.
\end{rem}

\subsection{Additional notation}
We write $\det_{\ell}$ for the \emph{$\ell$-adic valuation of the determinant}. If $M$ is in $\operatorname{Mat}_2(\Z/\ell^n\Z)$, then $\det(M)$ is well-defined modulo $\ell^n$ so we can write $\det_{\ell}(M)\geqslant n$ if the determinant is zero modulo $\ell^n$. Notice that the matrices in $\operatorname{Mat}_2(\Z_{\ell})$ that are zero modulo $\ell^a$ for some $a\geqslant 0$ and with a given reduction modulo $\ell^n$ for some $n>a$ have a determinant which is well-defined modulo $\ell^{a+n}$.
More generally, if $p$ is a polynomial with integer coefficients and $z_1, z_2$ are in $\mathbb{Z}/\ell^n\mathbb{Z}$ then we write $v_\ell(p(z_1,z_2))$ for the minimum of $v_\ell(p(Z_1,Z_2))$ over all lifts $Z_1, Z_2$ of $z_1, z_2$ to $\mathbb{Z}_\ell$. For example, if $z \equiv \ell^{t} \pmod{\ell^n}$ with $t<n$ then we have $v_\ell(z^2)=2t$ because all lifts $Z$ of $z$ satisfy $v_\ell(Z^2)=2t$.

\subsection{Conditions related to the group structure of the $1$-Eigenspace}

\begin{lem}\label{condi-det}
The set $\mathcal M_{a,b}$ consists of the matrices $M\in G$ that satisfy
\begin{equation}\label{conditions-Mab}
M-I \equiv 0\!\!\!\! \pmod{\ell^a},\; M-I \not \equiv 0 \!\!\!\!\pmod{\ell^{a+1}}\quad\text{and}\quad \mathrm{det}_{\ell}(M-I)=2a+b\,.
\end{equation}
For every $n>a+b$ the set $\mathcal M_{a,b}$ is the preimage of $\mathcal M_{a,b}(n)$ in $G$, and $\mathcal{M}_{a,b}(n)$ consists of the matrices $M\in G(n)$ satisfying \eqref{conditions-Mab}.
\end{lem}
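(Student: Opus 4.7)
The plan is to use the Smith normal form of $M-I$ over $\mathbb{Z}_\ell$ to convert the $1$-Eigenspace condition into \eqref{conditions-Mab}. Assuming $M \neq I$, write $M-I = UDV$ with $U,V \in \GL_2(\mathbb{Z}_\ell)$ and $D = \operatorname{diag}(\ell^\alpha, \ell^\beta)$ for some $0 \leqslant \alpha \leqslant \beta \leqslant +\infty$ (a factor $\ell^{+\infty}$ stands for $0$ and arises precisely when $M-I$ has rank $1$). Since $U,V$ act invertibly on $\varinjlim_n (\mathbb{Z}/\ell^n\mathbb{Z})^2 \simeq (\mathbb{Q}_\ell/\mathbb{Z}_\ell)^2$, the kernel of $M-I$ on this module is isomorphic to the kernel of $D$, which is $\mathbb{Z}/\ell^\alpha\mathbb{Z} \times \mathbb{Z}/\ell^\beta\mathbb{Z}$. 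Hence $M \in \mathcal{M}_{a,b}$ if and only if $(\alpha,\beta) = (a, a+b)$, and the degenerate case $M = I$ is excluded from every $\mathcal{M}_{a,b}$ (and from \eqref{conditions-Mab}) simultaneously.

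Next I translate the invariants into \eqref{conditions-Mab}. The first elementary divisor $\alpha$ equals the minimum $\ell$-adic valuation of the entries of $M-I$, so $\alpha = a$ is precisely the pair of congruences $M-I \equiv 0 \pmod{\ell^a}$ and $M-I \not\equiv 0 \pmod{\ell^{a+1}}$. Multiplicativity of the determinant with $\det(U), \det(V) \in \mathbb{Z}_\ell^\times$ yields $\det_\ell(M-I) = \alpha + \beta$, so granted $\alpha = a$ the condition $\beta = a+b$ rewrites as $\det_\ell(M-I) = 2a+b$. This establishes the first assertion of the lemma.

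For the second assertion, I verify that each condition in \eqref{conditions-Mab} is determined by $M \bmod \ell^n$ as soon as $n > a+b$. The first two conditions only involve $M \bmod \ell^{a+1}$. For the determinantal one, once $M - I \equiv 0 \pmod{\ell^a}$ I factor $M - I = \ell^a N$ with $N \in \operatorname{Mat}_2(\mathbb{Z}_\ell)$, whence $\det_\ell(M-I) = 2a + v_\ell(\det N)$; the condition $v_\ell(\det N) = b$ depends only on $N \bmod \ell^{b+1}$, hence on $M \bmod \ell^{a+b+1}$. Since $n > a+b$ forces $n \geqslant a+b+1$, all three conditions descend to $G(n)$, which simultaneously gives both the preimage description of $\mathcal{M}_{a,b}$ and the characterization of $\mathcal{M}_{a,b}(n)$ as the matrices in $G(n)$ satisfying \eqref{conditions-Mab}. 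The only structurally non-trivial ingredient is the identification of $\ker(M-I)$ on the direct limit with the elementary divisors of $M-I$; once this Smith-form calculation is in place, the rest is routine bookkeeping with $\ell$-adic valuations.
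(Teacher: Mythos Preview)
Your proof is correct and follows essentially the same idea as the paper's: both arguments identify the group structure of $\ker(M-I)$ with the elementary divisors of $M-I$, and then read off the three conditions from them. The paper does this a bit more implicitly---proving necessity via $\#\ker A=\ell^{\det_\ell A}$ and sufficiency by writing $M-I=\ell^a A$ with $A\not\equiv 0\pmod\ell$ so that $\ker A$ is cyclic---whereas your use of the Smith normal form handles both directions at once and makes the elementary-divisor structure explicit; the treatment of the mod-$\ell^n$ descent is the same in both.
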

\begin{proof}
The necessity of \eqref{conditions-Mab} follows from the fact that for $A\in \operatorname{Mat}_2(\Z_\ell)$  the order of the kernel of $A$ (considered as acting on the direct limit $\varinjlim_n (\Z/\ell^n\Z)^2$) equals $\ell^{\det_\ell A}$, that is, there are $\ell^{\det_\ell A}$ points $x$ in $\varinjlim_n (\Z/\ell^n\Z)^2$ such that $Ax=0$.
 Now suppose that $M\in G$ satisfies \eqref{conditions-Mab}, and write $M=I+\ell^a A$ for some $A\in \operatorname{Mat}_2(\Z_{\ell})$ which is non-zero modulo $\ell$. We have $\det_\ell(A)=b$. Since $A$ is nonzero modulo $\ell$, the kernel of $A$ is cyclic. Thus $\ker(A)\simeq \Z/\ell^b\Z$ and hence $\ker(M-I)\simeq \Z/\ell^a \Z \times \Z/\ell^{a+b}\Z$. If $n>a+b$, \eqref{conditions-Mab} holds for the matrices in $\mathcal{M}_{a,b}(n)$ and their preimages in $G$.
\end{proof}

By Remark \ref{rmk:DiagonalModelWorks} and Lemma \ref{condi-det}, the maps in \eqref{phil}  preserve $\mathcal{M}_{a,b}$ and $\mathcal{M}_{a,b}(n)$, thus for a split Cartan subgroup we can indifferently use the general model \eqref{nfaa} or the diagonal model \eqref{eq:SplitCartan}.

\subsection{Existence of the Haar measure}

A fundamental tool in dealing with Haar measures on profinite groups is the following simple lemma:
\begin{lem}\label{lemma:Haar}{\cite[Lemma 18.1.1]{MR2445111}}
Let $\mathcal G_1$ be a profinite group equipped with its normalized Haar measure, and let $\mathcal G_2$ be an open normal subgroup of $\mathcal G_1$. Call $\pi$ the natural projection $\mathcal G_1 \to \mathcal G_1/\mathcal G_2$. For any subset $S$ of the finite group $\mathcal G_1/\mathcal G_2$, the set $\pi^{-1}(S)$ is measurable in $\mathcal G_1$, and its Haar measure is $\#S/\#(\mathcal G_1/\mathcal G_2)$.
\end{lem}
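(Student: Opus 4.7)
The plan is to reduce the statement to two standard facts about Haar measure on profinite groups: left-invariance and the fact that open subgroups of compact groups have finite index. Since $\mathcal G_1$ is profinite (hence compact) and $\mathcal G_2$ is open in $\mathcal G_1$, the quotient $\mathcal G_1/\mathcal G_2$ is finite, so we may write
\[
\mathcal G_1 = \bigsqcup_{i=1}^{N} g_i \mathcal G_2, \qquad N = \#(\mathcal G_1/\mathcal G_2),
\]
for some system of coset representatives $g_1,\ldots,g_N$.

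First I would verify measurability: each coset $g_i \mathcal G_2$ is open (translate of the open subgroup $\mathcal G_2$), so $\pi^{-1}(S) = \bigsqcup_{s \in S} g_s \mathcal G_2$ is a finite disjoint union of open sets, in particular a Borel set, hence Haar-measurable.

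Next I would compute the measure of a single coset. By left-invariance of the normalized Haar measure $\mu$ on $\mathcal G_1$, all cosets $g_i \mathcal G_2$ have the same measure. Since they form a finite partition of $\mathcal G_1$ and $\mu(\mathcal G_1) = 1$, additivity gives $\mu(g_i\mathcal G_2) = 1/N$ for each $i$. Finally, additivity on the disjoint decomposition $\pi^{-1}(S) = \bigsqcup_{s \in S} g_s \mathcal G_2$ yields
\[
\mu(\pi^{-1}(S)) = \sum_{s \in S} \mu(g_s \mathcal G_2) = \frac{\#S}{N} = \frac{\#S}{\#(\mathcal G_1/\mathcal G_2)}.
\]

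There is no real obstacle here — the argument uses nothing beyond the defining properties of a normalized Haar measure (existence, left-invariance, normalization to $1$) on a compact group, together with the elementary observation that open subgroups of compact groups have finite index. The assumption that $\mathcal G_2$ be \emph{normal} is in fact not needed for this computation; it is included only so that $\mathcal G_1/\mathcal G_2$ carries a group structure and $\pi$ is a homomorphism, which is convenient for how the lemma will be applied in later sections.
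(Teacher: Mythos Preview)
Your proof is correct and is the standard argument; the paper itself does not give a proof of this lemma but simply cites \cite[Lemma 18.1.1]{MR2445111}. Your observation that normality of $\mathcal G_2$ is not actually needed for the measure computation is also accurate.
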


\begin{lem}\label{lemma:EverythingHasWellDefinedAB}
For all integers $a,b\geqslant 0$ the set $\mathcal{M}_{a,b}$ is measurable in $G$ and we have $\mu_{a,b}=\mu_{a,b}(n)$ whenever $n>a+b$. In particular we have $\mu_{a,b}=0$ if and only if $\mathcal{M}_{a,b}=\emptyset$. The set $\bigcup_{a,b \in \mathbb N} \mathcal{M}_{a,b}$ is measurable in $G$, and its complement has measure zero.
\end{lem}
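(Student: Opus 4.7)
The plan is to reduce everything to the elementary Lemma \ref{lemma:Haar}, applied to the reduction maps $\pi_n : G \twoheadrightarrow G(n)$. First, for $n > \max(a+b, n_0-1)$, Lemma \ref{condi-det} identifies $\mathcal{M}_{a,b}$ with $\pi_n^{-1}(\mathcal{M}_{a,b}(n))$, and $\ker \pi_n$ is an open normal subgroup of $G$. Lemma \ref{lemma:Haar}, applied with $\mathcal G_1 = G$, $\mathcal G_2 = \ker \pi_n$ and $S = \mathcal{M}_{a,b}(n) \subset G(n)$, immediately gives both the measurability of $\mathcal{M}_{a,b}$ and the equality $\mu_{a,b} = \#\mathcal{M}_{a,b}(n)/\#G(n) = \mu_{a,b}(n)$. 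The equivalence $\mu_{a,b}=0 \iff \mathcal{M}_{a,b}=\emptyset$ then follows at once: a single element of $\mathcal{M}_{a,b}$ reduces to one in $\mathcal{M}_{a,b}(n)$, forcing $\mu_{a,b}(n) \geqslant 1/\#G(n) > 0$.

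For the final claim, $\bigcup_{a,b}\mathcal{M}_{a,b}$ is measurable as a countable union of measurable sets, and its complement is
\[
V = \{M \in G : \ker(M-I) \text{ is infinite}\} = \{M \in G : \det(M-I)=0\},
\]
since, as used implicitly in the proof of Lemma \ref{condi-det}, the kernel of $M-I$ acting on $\varinjlim_n (\Z/\ell^n\Z)^2$ has order $\ell^{\det_\ell(M-I)}$ and is therefore finite precisely when $\det(M-I) \neq 0$ in $\Z_\ell$. Setting $V_N := \{M \in G : \det(M-I) \equiv 0 \pmod{\ell^N}\}$ for $N \geqslant n_0$, another application of Lemma \ref{lemma:Haar} yields $\mu(V_N) = \#V_N(N)/\#G(N)$; since $V = \bigcap_N V_N$, continuity of measure from above reduces the problem to showing $\mu(V_N) \to 0$.

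The hard part is this last estimate. The key observation is that $f(M) := \det(M-I)$ is not identically zero on $G$: for $\GL_2(\Z_\ell)$ this is obvious; on a Cartan subgroup parametrised by \eqref{nfaa} we have $f = (x-1)(x+cy-1) - dy^2$, whose coefficient of $y^2$ is $-d$ (nonzero except when $d=0$, in which case $c=1$ and $f = (x-1)(x+y-1)$ is still nonzero); and on the non-identity coset of the normalizer of $C$ parametrised by \eqref{eqcomplement} one computes $f(M) = 1 + \det M$, again a non-constant polynomial. Consequently the zero locus of $f$ is a proper closed subscheme of the Zariski closure of $G$ in $\GL_{2,\Z_\ell}$, of relative dimension at most $\dim G - 1$. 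The plan is to convert this dimension drop into the bound $\#V_N(N) = O(\ell^{(\dim G - 1)N})$, which together with $\#G(N) \asymp \ell^{(\dim G) N}$ from \eqref{dimG} gives $\mu(V_N) = O(\ell^{-N}) \to 0$ and hence $\mu(V) = 0$. The counting itself is elementary in each case: one stratifies according to the $\ell$-adic valuations of the defining coordinates and applies Hensel's lemma on the smooth locus of $\{f = 0\}$.
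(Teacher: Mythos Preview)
Your argument follows the paper's proof almost exactly: both reduce the first assertions to Lemma~\ref{lemma:Haar} via Lemma~\ref{condi-det}, identify the complement of $\bigcup_{a,b}\mathcal{M}_{a,b}$ with the locus $\det(M-I)=0$, and finish by showing that the number of mod-$\ell^N$ points on this locus is $O(\ell^{(\dim G-1)N})$ while $\#G(N)\asymp\ell^{(\dim G)N}$.

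The only substantive difference is in that last point-count. The paper simply invokes a general result of Oesterl\'e (\cite[Theorem~4]{MR656627}) on reductions of closed $\ell$-adic analytic subsets, which immediately yields $\#\pi_N(\{f=0\})\leqslant c\,\ell^{(\dim G-1)N}$. You instead sketch a hands-on argument via Hensel on the smooth locus plus stratification by valuations. This is viable here because the equations are explicit, but your final sentence glosses over the one genuinely delicate point: the hypersurface $\{f=0\}$ is singular at $M=I$ (all partials of $\det(M-I)$ vanish there), so Hensel alone does not control the lifts in a neighbourhood of the identity. One must separately bound, for each $k$, the matrices with $M\equiv I\pmod{\ell^k}$, $M\not\equiv I\pmod{\ell^{k+1}}$ and $\det_\ell(M-I)\geqslant N$, which is not hard but is exactly the kind of case analysis your sentence hides. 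The citation in the paper buys you this uniformity for free; your route is more self-contained but needs that extra paragraph to be complete.
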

\begin{proof}
For the first assertion apply Lemma \ref{lemma:Haar} to $G$, $\ker(G \to G(n))$ and $\mathcal{M}_{a,b}(n)$, noticing that $\mathcal{M}_{a,b}$ is the preimage of 
$\mathcal{M}_{a,b}(n)$ in $G$ by Lemma \ref{condi-det}. The set $\mathcal M :=\bigcup_{a,b \in \mathbb N} \mathcal{M}_{a,b}$ is measurable because it is a countable union of measurable sets. We now prove $\mu(G\setminus \mathcal M)=0$. Fix $n_0$ as in \eqref{n0-forH} and for $n\geqslant n_0$ call $\pi_n: G\rightarrow G(n)$ the reduction modulo $\ell^n$. We have $G\setminus \mathcal M \subseteq \pi_n^{-1}\left( \pi_n\left(G\setminus \mathcal M \right) \right)$, so by Lemma \ref{lemma:Haar} it suffices to show that
\begin{equation}\label{10}
\mu(\pi_n\left(G\setminus \mathcal M \right))=\frac{\# \pi_n\left( G\setminus \mathcal M \right)}{\#G(n)}
\end{equation}
tends to 0 as $n$ tends to infinity. By \eqref{dimG} we know that $\#G(n)$ is a constant times $\ell^{n \dim G'}$. Let $G'_{\infty}$ be the closed $\ell$-adic analytic subvariety of $G'$ defined by $\det(M-I)=0$.
We have $G\setminus \mathcal M\subseteq G'_{\infty}$ because for any $M\in G$ with $\det(M-I) \neq 0$ there exists $n$ such that $M \not \equiv I \pmod{\ell^n}$ and $\det_\ell(M-I) \leqslant n$, whence $M \in \mathcal{M}$. Thus the numerator in \eqref{10} is at most $\# \pi_n(G'_{\infty})$, which by \cite[Theorem 4]{MR656627} is at most a constant times $\ell^{n \dim(G'_{\infty})}=\ell^{n(\dim G' -1)}$.\end{proof}

\subsection{The complement of a Cartan subgroup in its normalizer}\label{star}

Fix a Cartan subgroup $C$ of $\GL_2(\Z_\ell)$ and denote by $N$ its normalizer. If $G$ is an open subgroup of $N$, set
\[
\mathcal{M}^*_{a,b}:=(N \setminus C) \cap \mathcal{M}_{a,b}\,.
\]

We denote by $\mathcal{M}^*_{a,b}(n)$ the reduction of $\mathcal{M}^*_{a,b}$ modulo $\ell^n$, that is its image in $G(n)$.

If $G$ is not contained in $C$, the sets $G \cap C$ and $G \cap (N \setminus C)$ are measurable and  have measure $1/2$ in $G$ because of Lemma \ref{lemma:Haar} applied to the canonical projection $G \to G/(G \cap C) \cong \Z/2\Z$. In particular we have
\[
\mu(\mathcal{M}_{a,b}) = \mu\left(\mathcal{M}_{a,b} \cap C\right) + \mu\left( \mathcal{M}^*_{a,b} \right).
\]
Since $\mu_N\left( \mathcal{M}_{a,b} \cap C \right)=1/2 \cdot \mu_C\left( \mathcal{M}_{a,b} \cap C \right)$, to determine $\mu_{a,b}$ we are reduced to computing $\mu( \mathcal{M}^*_{a,b})$ and  studying $G\cap C$, which is open in the Cartan subgroup $C$.

\begin{prop}\label{complement}
We have $\mathcal{M}^*_{a,b} = \emptyset$ for $a > 1$ (if $\ell$ is odd or $C$ is unramified, for $a>0$).
\end{prop}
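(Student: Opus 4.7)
The plan is simply to combine the description of $\mathcal{M}_{a,b}$ from Lemma \ref{condi-det} with the congruence restrictions on elements of $N \setminus C$ from Lemma \ref{lemma:NormalizerAIsZero}. A matrix $M \in \mathcal{M}_{a,b}$ satisfies in particular $M - I \equiv 0 \pmod{\ell^a}$, so if $a$ is too large this contradicts the fact that $M \not\equiv I$ modulo a certain power of $\ell$ whenever $M \in N \setminus C$.

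More precisely, Lemma \ref{lemma:NormalizerAIsZero} gives three cases. First, if $\ell$ is odd, every $M \in N \setminus C$ satisfies $M \not\equiv I \pmod{\ell}$; hence $M \in \mathcal{M}_{a,b}$ forces $a = 0$. Second, if $\ell = 2$ and $C$ is unramified, again $M \not\equiv I \pmod{2}$ for $M \in N \setminus C$, so $a = 0$. Third, if $\ell = 2$ and $C$ is ramified, we only know $M \not\equiv I \pmod{4}$, which rules out $a \geq 2$ but leaves the cases $a = 0, 1$ possible. In all three situations the stated vanishing of $\mathcal{M}^*_{a,b}$ follows immediately.

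There is no real obstacle: the proof is just a one-line combination of the two lemmas. The only care needed is to line up the three sub-cases with the parenthetical remark in the statement (odd $\ell$ or unramified $C$ versus ramified $C$ at $\ell = 2$). I would write the argument as a short paragraph spelling out these three cases explicitly.
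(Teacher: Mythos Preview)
Your proposal is correct and is exactly the paper's approach: the paper's proof consists of the single sentence ``This is a consequence of Lemma \ref{lemma:NormalizerAIsZero}'', which is precisely the combination you describe (the condition $M \equiv I \pmod{\ell^a}$ from the definition of $\mathcal{M}_{a,b}$ against the non-congruence in Lemma \ref{lemma:NormalizerAIsZero}). Your write-up simply makes the three sub-cases explicit.
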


\begin{proof} This is a consequence of Lemma \ref{lemma:NormalizerAIsZero}.
\end{proof}

\section{First results on the cardinality of $\mathcal{M}_{a,b}(n)$}

\begin{thm}\label{thm:GeneralLift}
Let $G'$ be either $\GL_2(\Z_\ell)$ or the normalizer of an unramified Cartan subgroup of $\GL_2(\Z_\ell)$. Let $G$ be an open subgroup of $G'$ of level $n_0$.
Call ${\mathcal{H}_{a,b}(n)}$ the set of matrices $M$ in $G(n)$ satisfying the following conditions:
\begin{enumerate}
\item if $a>0$, $M \equiv I \pmod{\ell^a}$; if $n>a$, $M \not \equiv I \pmod{\ell^{a+1}}$;
\item if $a<n \leqslant a+b$, $\det_\ell(M-I) \geqslant a+n$; if $n>a+b$, $\det_\ell(M-I)=2a+b$.
\end{enumerate}
For every integer $n\geqslant 1$ define 
\[
f(n)=\left\{\begin{array}{ll}
1 & \text{if $n < a$} \\
\#\mathbb{T}^\times & \text{if $n=a, b=0$} \\
\#\mathbb{T}-\#\mathbb{T}^\times-1 & \text{if $n=a, b>0$}\\
\#\mathbb{T}\cdot \ell^{-1}  & \text{if $a<n<a+b$} \\
\#\mathbb{T}\cdot (1-\ell^{-1}) & \text{if $n=a+b$, $b>0$} \\
\#\mathbb{T} & \text{if $n>a+b$}\,. \\
\end{array}\right.
\]

Then the following holds:
\begin{enumerate} 
\item[(i)] For every $n\geqslant n_0$ we have ${\# \mathcal{H}_{a,b}(n+1)}=f(n)\cdot {\#\mathcal{H}_{a,b}(n)}$. For each $M \in\mathcal{H}_{a,b}(n)$ the number of matrices ${M'} \in\mathcal{H}_{a,b}(n+1)$ such that ${M'} \equiv M \pmod{\ell^n}$ equals $f(n)$.
\item[(ii)] If $\mathcal{M}_{a,b}\neq \emptyset$ and $n \geqslant n_0$, or if $n>a+b$, we have $\mathcal{M}_{a,b}(n)=\mathcal{H}_{a,b}(n)$.
\item[(iii)] For $a\geqslant n_0$ we have $\mathcal{M}_{a,b}= \emptyset$ if and only if $b>0$ and $\#\mathbb{T}-\#\mathbb{T}^\times-1=0$.
\end{enumerate}
\end{thm}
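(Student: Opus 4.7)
The plan is to prove part~(i) by fixing $M \in \mathcal{H}_{a,b}(n)$ with $n \geqslant n_0$ and using Remark~\ref{kerT} to parametrize its lifts in $G(n+1)$ as $M' = M + \ell^n B$ with $B$ ranging over the tangent space $\mathbb{T}$. Condition~(1) in the definition of $\mathcal{H}_{a,b}$ passes automatically to every such lift once $n \geqslant a$, and for $n < a$ it pins $\mathcal{H}_{a,b}(n)$ down to the singleton $\{I\}$ (so $f(n) = 1$ with $M' = I$ as the unique lift). The only substantive constraint is therefore condition~(2), which concerns $\det_\ell(M'-I)$.

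Writing $M - I = \ell^a A$ with $A \not\equiv 0 \pmod \ell$ (legitimate for $n \geqslant a$), the expansion $\det(A + \ell^{n-a} B) = \det(A) + \ell^{n-a}\, \mathrm{tr}(\mathrm{adj}(A) B) + \ell^{2(n-a)} \det(B)$ converts condition~(2) into a linear congruence on $B$ modulo $\ell$ of the form $\mathrm{tr}(\mathrm{adj}(A) B) \equiv \gamma \pmod \ell$ for some $\gamma$ determined by $M$. A case split on the position of $n$ relative to $a$ and $a+b$ then yields the six declared values of $f(n)$: at $n = a$ with $b = 0$ condition~(2) forces $\det(B)\not\equiv 0\pmod\ell$, i.e.\ $B\in\mathbb{T}^\times$; at $n = a$ with $b > 0$ it forces $B$ non-zero but non-invertible ($\#\mathbb{T} - \#\mathbb{T}^\times - 1$ choices); for $a < n < a + b$ it is a single non-trivial linear congruence (a fraction $\ell^{-1}$ of $\mathbb{T}$); at $n = a + b$ with $b > 0$ it is the negation of such a congruence (fraction $1 - \ell^{-1}$); and for $n > a + b$ the target valuation is automatic, so every $B$ works.

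The main obstacle is to show that the functional $B \mapsto \mathrm{tr}(\mathrm{adj}(A) B) \bmod \ell$ on $\mathbb{T}$ is surjective onto $\mathbb{F}_\ell$ whenever $A \not\equiv 0 \pmod \ell$, since this ensures every fibre has the expected size $\#\mathbb{T}/\ell$. For $\GL_2$ this is immediate. For the normalizer of an unramified Cartan I would split into two sub-cases. If $M \in C$, then $A$ lies in the Cartan tangent $\mathbb{T}$, and the symmetric bilinear form $(A, B) \mapsto \mathrm{tr}(\mathrm{adj}(A) B)$ on $\mathbb{T}$ has Gram matrix $\bigl(\begin{smallmatrix} 2 & c \\ c & -2d \end{smallmatrix}\bigr)$ in the $(x, y)$-coordinates of $\mathbb{T}$; its determinant $-(c^2 + 4d)$ is the discriminant of the quadratic ring $R$, which is an $\ell$-adic unit exactly when $C$ is unramified by Propositions~\ref{prop:ConditionsCDodd} and~\ref{prop:ConditionsCD2}, so non-degeneracy yields surjectivity. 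If instead $M \in N \setminus C$ (which forces $a = 0$ by Lemma~\ref{lemma:NormalizerAIsZero}), a direct computation from the shape~\eqref{eqcomplement} yields $\mathrm{tr}(\mathrm{adj}(M-I) B) = -2X - cY$ in the Cartan coordinates of $B$, which is non-zero mod $\ell$ under the unramified hypothesis.

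Part~(ii) is then a formal consequence. The inclusion $\mathcal{M}_{a,b}(n) \subseteq \mathcal{H}_{a,b}(n)$ is Lemma~\ref{condi-det}, as is the reverse inclusion when $n > a + b$. For $n_0 \leqslant n \leqslant a + b$, starting from any $M \in \mathcal{H}_{a,b}(n)$ I would iterate part~(i) to build a compatible sequence of lifts $M_n, M_{n+1}, \ldots$; the assumption $\mathcal{M}_{a,b} \neq \emptyset$ forces $f(m) > 0$ at every step, so compactness of $G$ yields an element of $\varprojlim_m \mathcal{H}_{a,b}(m)$, which for $m > a+b$ sits in $\mathcal{M}_{a,b}$ and reduces back to $M$. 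Finally, for part~(iii) one has $\mathcal{H}_{a,b}(a) = \{I\}$ when $a \geqslant n_0$; the first lift step is feasible iff $f(a) > 0$, i.e.\ iff $b = 0$ or $\#\mathbb{T} - \#\mathbb{T}^\times - 1 \neq 0$, and every subsequent $f(m)$ is strictly positive by~\eqref{eqprop:XB}. Hence $\mathcal{M}_{a,b} = \emptyset$ precisely when $b > 0$ and $\#\mathbb{T} = \#\mathbb{T}^\times + 1$.
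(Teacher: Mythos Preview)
Your overall architecture matches the paper's proof closely, and your treatment of the case $M\in C$ via the Gram matrix of the trace pairing $(A,B)\mapsto\mathrm{tr}(\mathrm{adj}(A)B)$ on $\mathbb{T}$ is a clean repackaging of the paper's explicit coefficient check: the non-degeneracy of this form is precisely the unramifiedness of $C$, so your surjectivity claim goes through there.

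However, there is a genuine error in the case $M\in N\setminus C$. Remark~\ref{kerT} identifies the \emph{kernel} of $G(n+1)\to G(n)$ with $\mathbb{T}$, so multiplicatively the lifts of $M$ are $L(I+\ell^nT)$ with $T\in\mathbb{T}$. Additively this reads $M'=L+\ell^n(\bar L\cdot T)$, where $\bar L=L\bmod\ell$; hence the additive perturbation $B$ ranges over $\bar L\cdot\mathbb{T}$, not over $\mathbb{T}$ itself. When $L\in C$ (or $G'=\GL_2$) we have $\bar L\cdot\mathbb{T}=\mathbb{T}$ and your parametrisation is fine, but when $L\in N\setminus C$ one gets instead the trace-zero space $\mathbb{T}_1$ of~\eqref{defT}, exactly as the paper records. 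Your computation $\mathrm{tr}(\mathrm{adj}(M-I)B)=-2X-cY$ is correct for $B\in\mathbb{T}$ but irrelevant: with $B=\bigl(\begin{smallmatrix}Z&-dW+cZ\\W&-Z\end{smallmatrix}\bigr)\in\mathbb{T}_1$ the functional is
\[
\mathrm{tr}\bigl(\mathrm{adj}(M-I)B\bigr)=-(2z+cw)\,Z+(2dw-cz)\,W,
\]
which \emph{does} depend on $M$ (compare the paper's equation~\eqref{eq:LiftsNormalizer}). One must then argue that this linear form on $\mathbb{T}_1$ is non-zero modulo $\ell$: the invertibility of $M$ forces $(z,w)\not\equiv(0,0)\pmod\ell$, and the unramified parameter constraints ($c=0$, $\ell\nmid 2d$ for $\ell$ odd; $c=1$ for $\ell=2$) then give the conclusion. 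Once this is fixed, parts~(ii) and~(iii) go through as you wrote them, essentially identically to the paper.
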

\begin{proof} We first prove (i). Since $n\geqslant n_0$, all lifts to $G'(n+1)$ of matrices in ${\mathcal{H}_{a,b}(n)}$ are in $G(n+1)$. If $n>a+b$ then clearly every lift to $G(n+1)$ of a matrix in $\mathcal{H}_{a,b}(n)$ belongs to $\mathcal{H}_{a,b}(n+1)$. If $n<a$, the sets ${\mathcal{H}_{a,b}(n)}$ and ${\mathcal{H}_{a,b}(n+1)}$ contain only the identity and we are done.

Suppose $n=a$: the only matrix in ${\mathcal{H}_{a,b}(a)}$ is the identity, so we apply Remark \ref{kerT}. For $b=0$ we count the matrices of the form $I+\ell^a T$ with $T \in \mathbb{T}$ and $\det_{\ell}(T)= 0$. For $b>0$ we count those $M\in \mathcal{H}_{a,b}(a+1)$ that are congruent to the identity modulo $\ell^{a}$ but not modulo $\ell^{a+1}$ and such that $\det_\ell(M-I) \geqslant 2a+1$: this means $M=I+\ell^a T$, where $T \in \mathbb{T}$ with $\det_\ell T \neq 0$, and excluding $T=0$.

Now consider the case $a<n<a+b$. Let $M \in\mathcal{H}_{a,b}(n)$ and fix some lift $L$ to $G(n+1)$.  
The lifts of $M$ are those matrices of the form $M' =L+\ell^n T$ with $T \in \mathbb{T}$, unless $G'$ is the normalizer of a Cartan subgroup $C$ and $M\notin C(n)$, for which by Lemma \ref{lemma:NormalizerAIsZero} we have $a=0$ and $T\in \mathbb T_1$, where 
\begin{equation}\label{defT}
\mathbb T_1:=\Big\{\begin{pmatrix}
z & -dw+cz \\ w & -z
\end{pmatrix}\quad \text{where $z,w\in \Z/\ell\Z$}\Big\}\,.
\end{equation}
Write $L-I=\ell^a N$ for some $N\in \operatorname{Mat}_2(\Z/\ell^{n+1-a}\Z)$. Since $b>n-a$, we have $\det(N)=\ell^{n-a} z$ for some $z \in \Z/\ell\Z$.
Setting $(N \bmod \ell)=(n_{ij})$ and $T=(t_{ij})$, we get the following congruence modulo $\ell^{n+1-a}$: 
\[
\det (M'-I ) \equiv \ell^{2a}\cdot \det (N+\ell^{n-a}T) 
\equiv
\ell^{n+a} \left(z + n_{11}t_{22}+ n_{22}t_{11}-n_{21}t_{12}-n_{12}t_{21} \right). 
\]
So the condition for $M'$ to be in ${\mathcal{H}_{a,b}(n+1)}$ is  
\begin{equation}\label{eq:Lifts1}
z + n_{22}t_{11}+n_{11}t_{22}-n_{21}t_{12}-n_{12}t_{21}= 0.
\end{equation}
We conclude by checking that this equation defines an affine subspace of codimension 1 in $\mathbb{T}$ (resp. in $\mathbb{T}_1$, if $M \not \in C(n)$). The equation is nontrivial because at least one of the $n_{ij}$ is nonzero, and this remark suffices for  $\GL_2(\Z_{\ell})$. If $G'$ is the normalizer of a Cartan subgroup, we also have to check that \eqref{eq:Lifts1} is independent from the equations defining $\mathbb{T}$ (resp.  $\mathbb{T}_1$), which are 
\[
\begin{cases}
t_{22}=t_{11}+ct_{21} \\
t_{12}=dt_{21}.
\end{cases}
\qquad\text{resp.}\qquad
\begin{cases} t_{22}=-t_{11} \\ t_{12}=-dt_{21}+ct_{11}. \end{cases}
\]

For the elements of $\mathbb{T}$, noticing that $(N \bmod \ell )$ depends only on $(M \bmod \ell^{a+1})$ and it is in $\mathbb T\setminus \{0\}$, 
we can rewrite \eqref{eq:Lifts1} as  
\begin{equation*}
z + (2n_{11}+cn_{21})t_{11}+(n_{11}c-2dn_{21})t_{21}=0
\end{equation*}
and we can easily check by Proposition \ref{prop:ConditionsCD2} that  $2n_{11}+cn_{21}$ or $n_{11}c-2dn_{21}$ is nonzero.

For the elements of $\mathbb{T}_1$, we again conclude by Proposition \ref{prop:ConditionsCD2} because $a=0$ and we have $(N+I \bmod \ell )\in \mathbb{T}_1\setminus\{0\}$, thus \eqref{eq:Lifts1} becomes
\begin{equation}\label{eq:LiftsNormalizer}
z - (2(n_{11}+1)+c n_{21})t_{11}+ (2dn_{21}-c(n_{11}+1))t_{21}=0. 
\end{equation}

If $n=a+b$ and $b>0$, we can reason as in the previous case. Now the condition for $M'$ to be in ${\mathcal{H}_{a,b}(a+b+1)}$ is that \eqref{eq:Lifts1} is \textit{not} satisfied: we conclude because that equation has $\ell^{-1}\cdot \#\mathbb{T}$ solutions.

We now prove (ii). The assertion for $n>a+b$ is the content of Lemma \ref{condi-det}, so in particular we know $\mathcal{M}_{a,b}(a+b+1)=\mathcal{H}_{a,b}(a+b+1)$ and we may suppose $n \leqslant a+b$. We clearly have $\mathcal{M}_{a,b}(n) \subseteq\mathcal{H}_{a,b}(n)$ and are left to prove the other inclusion.
The assumption $\mathcal{M}_{a,b}\neq \emptyset$ implies that for all $x\geqslant 1$ the sets $\mathcal{M}_{a,b}(x)$ and $\mathcal{H}_{a,b}(x)$ are non-empty and hence   by (i) we know $f(x)\neq 0$ for all $x \geqslant n_0$.
Thus for any $M \in\mathcal{H}_{a,b}(n)$ there is some ${M'} \in {\mathcal{H}_{a,b}(a+b+1)}$ satisfying ${M'} \equiv M \pmod{\ell^n}$, and we deduce $M\in \mathcal{M}_{a,b}(n)$.

Finally, we prove (iii). The condition $f(a)=0$ is equivalent to $b>0$ and $\#\mathbb{T}-\#\mathbb{T}^\times-1=0$. By (i), if $f(a)=0$ then $\mathcal{H}_{a,b}(a+1)$ is empty and hence also $\mathcal{M}_{a,b}(a+1)$ and $\mathcal{M}_{a,b}$ are empty. If $f(a)\neq 0$ then we have $f(x)\neq 0$ for all $x\geqslant 1$. Since $\mathcal{H}_{a,b}(a)$ contains the identity, we deduce that $\mathcal{H}_{a,b}(a+b+1)=\mathcal{M}_{a,b}(a+b+1)$ is nonempty, and hence $\mathcal{M}_{a,b}\neq \emptyset$.
\end{proof}

\section{The number of lifts for the reductions of matrices}

\subsection{Main result}

We study the lifts of a matrix $M\in \mathcal{M}_{a,b}(n)$ to $\mathcal{M}_{a,b}(n+1)$, namely the matrices in $\mathcal{M}_{a,b}(n+1)$ which are congruent to $M$ modulo $\ell^n$.

\begin{thm}\label{thm-lifts}
Let $G$ be an open subgroup of either $\GL_2(\Z_\ell)$ or the normalizer $N$ of a Cartan subgroup $C$ of $\GL_2(\Z_\ell)$. Let $n_0$ be the level of $G$.
For $n\geqslant n_0$ the number of lifts of a matrix $M\in \mathcal{M}_{a,b}(n)$ to $\mathcal{M}_{a,b}(n+1)$ is independent of $M$ in the first case, while in the second case it depends at most on whether $M$ belongs to either $C(n)$ or $(N\setminus C)(n)$.
\end{thm}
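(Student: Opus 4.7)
The proof will split into two cases matching the statement: $G$ open in $\GL_2(\Z_\ell)$, and $G$ open in the normalizer $N$ of a Cartan subgroup $C$. The first case is immediate from Theorem \ref{thm:GeneralLift}(i), which exhibits the number of lifts of a matrix in $\mathcal{M}_{a,b}(n)$ to $\mathcal{M}_{a,b}(n+1)$ as a function $f(n)$ depending only on $(a,b,n)$ and not on $M$. The novelty of Theorem \ref{thm-lifts} is therefore the normalizer case, where moreover ramified Cartan subgroups must now be covered, since they were excluded in Theorem \ref{thm:GeneralLift}.

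The first step for the normalizer case is to observe that by Remark \ref{nor}, together with the definition of $n_0$ (which is forced to be $\geq 2$ when $\ell = 2$ and $C$ is ramified, exactly to accommodate Lemma \ref{lemma:NormalizerAIsZero}), the sets $C(n)$ and $(N \setminus C)(n)$ are disjoint for every $n \geq n_0$, and likewise at level $n+1$. Consequently any lift of $M \in G(n)$ to $G(n+1)$ remains in the same component of $N$ as $M$: lifts of elements of $C(n)$ stay in $C(n+1)$, and lifts of elements of $(N\setminus C)(n)$ stay in $(N\setminus C)(n+1)$. It therefore suffices to count lifts within each of the two components separately and check that the count depends only on $(a,b,n)$ and on the component.

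For $M \in C(n) \cap \mathcal{M}_{a,b}(n)$, I plan to run the argument of Theorem \ref{thm:GeneralLift}(i) within $C$ itself, exploiting Remark \ref{kerT} to identify $\ker(C(n+1) \to C(n))$ with the tangent space $\mathbb{T}$. Writing $M - I = \ell^a N$ and expanding $\det(M + \ell^n T - I)$ modulo a suitable power of $\ell$ yields, just as in the proof of Theorem \ref{thm:GeneralLift}, a linear equation in $T \in \mathbb{T}$ whose solution set has cardinality independent of $M$, provided the equation is nontrivial. The only step genuinely requiring new input is verifying nontriviality in the ramified setting: this is a finite case check using the explicit parameterization \eqref{nfaa} and the classification of parameters $(c,d)$ of Propositions \ref{prop:ConditionsCDodd} and \ref{prop:ConditionsCD2}. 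For $M \in (N \setminus C)(n) \cap \mathcal{M}_{a,b}(n)$, Proposition \ref{complement} restricts us to $a \leq 1$ (and to $a = 0$ unless $\ell = 2$ and $C$ is ramified), so only a handful of subcases are relevant. Here the lifts are parameterized via the "antidiagonal" tangent set $\mathbb{T}_1$ of \eqref{defT}, and the analogue of equation \eqref{eq:LiftsNormalizer} again yields a linear condition whose nontriviality is verified directly from \eqref{eqcomplement}.

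The main obstacle is thus the ramified Cartan case excluded from Theorem \ref{thm:GeneralLift}: the determinant expansions on $\mathbb{T}$ and $\mathbb{T}_1$ need to be carried out for all ramified values of $(c,d)$, and one must confirm in each subcase that the linear coefficients do not all vanish modulo $\ell$. The case $\ell = 2$ with $C$ ramified will require particular attention, since both the enlarged level $n_0 \geq 2$ and the mod-$2$ vs mod-$4$ distinction in Lemma \ref{lemma:NormalizerAIsZero} come into play; the goal there is to show that the count of lifts in the $N \setminus C$ component, although possibly different from the one in the $C$ component, still depends only on $(a,b,n)$.
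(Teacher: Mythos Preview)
Your plan is correct for the $\GL_2$ case and for the unramified Cartan case: there the paper also simply invokes Theorem \ref{thm:GeneralLift}(i). The gap lies in the ramified Cartan component.

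You claim that for $M \in C(n)$ in the ramified case the argument of Theorem \ref{thm:GeneralLift} goes through and ``the only step genuinely requiring new input is verifying nontriviality'' of the linear equation, which you describe as a finite case check. This is exactly where the approach breaks. For a ramified Cartan the parameters are $(c,d)=(0,d)$ with $\ell \mid d$ (for $\ell$ odd), and the linear equation \eqref{eq:Lifts1} restricted to $\mathbb{T}$ becomes
\[
z + 2n_{11}\, t_{11} - 2d\, n_{21}\, t_{21} = 0 \pmod \ell.
\]
Here $n_{11}\equiv x$ and $n_{21}\equiv y$ are the entries of $\ell^{-a}(M-I)$ modulo $\ell$. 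Since $d\equiv 0$, the coefficient of $t_{21}$ vanishes identically, so nontriviality hinges on $n_{11}\not\equiv 0$. But in a ramified Cartan it is perfectly possible to have $x\equiv 0\pmod\ell$ while $y$ is a unit (this is precisely the situation where only $y$, not $x$, witnesses $M\not\equiv I\pmod{\ell^{a+1}}$). For such $M$ the linear form collapses to $z=0$, and the number of lifts is then $\ell^2$ or $0$ according to the value of $z$, which depends on $M$. So the linear-equation method does not, on its own, establish that the count is independent of $M$.

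The paper handles this via Theorem \ref{thm:LiftsRamifiedCartan}, whose proof is not a linear-algebra check at all: it is a structural case analysis on $v_\ell(x^2)$ versus $v_\ell(d)$, showing in each regime either that \emph{all} $M\in\mathcal{M}_{a,b}(n)$ share the same valuation profile (so all lifts automatically lie in $\mathcal{M}_{a,b}(n+1)$, giving $\ell^2$ lifts), or that a genuine linear condition survives (giving $\ell$ lifts), and crucially that these regimes do not mix within a single $\mathcal{M}_{a,b}(n)$. A similar remark applies to the $(N\setminus C)$ component for $\ell=2$ ramified: the paper's Theorem \ref{thm:LiftsNormalizer} uses the extra mod-$2^{n+1}$ information in $\det(M-I)$ coming from the trace-zero form \eqref{eqcomplement}, rather than the linear equation \eqref{eq:LiftsNormalizer}. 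You will need these additional arguments; the ``finite case check'' you envisage does not suffice.
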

\begin{proof}[Proof of Theorem \ref{thm-lifts}]
If $G$ is open in $\operatorname{GL}_2(\Z_\ell)$ or if $C$ is unramified, the number of lifts of $M$ to $G(n+1)$ is independent of $M$ by Theorem \ref{thm:GeneralLift} (i). 
If $C$ is ramified the assertion follows from Theorems \ref{thm:LiftsRamifiedCartan} and  \ref{thm:LiftsNormalizer}.
\end{proof}

\begin{exa}
The number of lifts may indeed depend on the coset of $N/C$. Suppose that $\ell$ is odd and consider the Cartan subgroup $C$ of $\operatorname{GL}_2(\Z_\ell)$ with parameters $(0,\ell)$. If $G$ is the normalizer of $C$ then the matrices 
\[
\begin{pmatrix}
1 & \ell \\ 1 & 1
\end{pmatrix}
\qquad\text{and}\qquad
\begin{pmatrix}
1 & -\ell \\ 1 & -1
\end{pmatrix}
\]
are in $\mathcal M_{0,1}$ and their reductions modulo $\ell$ have respectively $\ell^2$ and $\ell^2-\ell$ lifts to $\mathcal M_{0,1}(2)$. Indeed, their lifts to $G(2)$ are of the form 
$$L=\begin{pmatrix}
1+\ell u & \ell \\ 1+\ell v & 1+\ell u
\end{pmatrix}\qquad \text{and}\qquad
L'=\begin{pmatrix}
1+\ell u & -\ell \\ 1+\ell v & -1-\ell u
\end{pmatrix}
$$
respectively, where $u,v\in \Z/\ell\Z$: we have 
$\det_\ell (L-I)=1$ for every $u,v$ while $\det_\ell (L'-I)=1$ holds if and only if $2u-1 \not \equiv 0 \pmod \ell$.
\end{exa}

\subsection{Ramified Cartan subgroups}

\begin{thm}\label{thm:LiftsRamifiedCartan}
Let $G$ be open in a ramified Cartan subgroup of $\GL_2(\Z_\ell)$. 
Let $n_0$ be the level of $G$. For all $a,b\geqslant 0$ and for all $n\geqslant n_0$ the number of lifts of a matrix $M\in \mathcal{M}_{a,b}(n)$ to $\mathcal{M}_{a,b}(n+1)$ is independent of $M$.
\end{thm}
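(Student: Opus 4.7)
The plan is to follow the strategy of Theorem \ref{thm:GeneralLift}. For $M \in \mathcal{M}_{a,b}(n)$ with $n \geq n_0$ and $n > a$, after fixing a lift to $G$ I write $M = I + \ell^a N$ with $N = \begin{pmatrix} u & dw \\ w & u+cw \end{pmatrix}$ and $\bar N \neq 0$; the $\#\mathbb{T}$ lifts of $M$ in $G(n+1)$ are then of the form $M + \ell^n T$ for $T \in \mathbb{T}$. As in the proof of Theorem \ref{thm:GeneralLift}, whether such a lift belongs to $\mathcal{M}_{a,b}(n+1)$ is controlled by the linear equation
\[
z + (2\bar u + \bar c \bar w)\, t_{11} + (\bar u \bar c - 2 \bar d \bar w)\, t_{21} \equiv 0 \pmod \ell,
\]
where $z := \det(N)/\ell^{n-a} \bmod \ell$; for $a < n < a+b$ we want the equation to be satisfied, for $n = a+b$ we want it to fail, for $n > a+b$ every lift is automatically valid, and the cases $n \leqslant a$ are either trivial or already covered by Theorem \ref{thm:GeneralLift}.

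For a ramified Cartan, Propositions \ref{prop:ConditionsCDodd} and \ref{prop:ConditionsCD2} force $\bar c = 0$, with $\bar d = 0$ except possibly $\bar d = 1$ when $\ell = 2$. The linear form in $T$ therefore reduces to $2\bar u\, t_{11} - 2\bar d \bar w\, t_{21}$, which is identically zero when $\ell = 2$ (both coefficients are even) and reduces to $2\bar u\, t_{11}$ for $\ell$ odd. The new phenomenon relative to Theorem \ref{thm:GeneralLift} is that this form can vanish identically modulo $\ell$; the equation then degenerates to $z \equiv 0 \pmod \ell$, and the count of valid $T$ is either $0$ or $\#\mathbb{T}$ depending on $z$, so the codimension-one argument breaks down.

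The key observation is that in the degenerate case the count of valid $T$ cannot be $0$: since $\mathcal{M}_{a,b}(n)$ is by definition the reduction modulo $\ell^n$ of $\mathcal{M}_{a,b}$, every $M \in \mathcal{M}_{a,b}(n)$ admits at least one valid lift, namely the reduction modulo $\ell^{n+1}$ of any preimage in $\mathcal{M}_{a,b}$. Hence whenever the form vanishes the count is exactly $\#\mathbb{T}$, independently of $M$. It remains to check that the dichotomy ``form nonzero'' vs.\ ``form identically zero'' is determined by $(a,b)$ alone and not by the individual $M$. For $\ell = 2$ this is automatic since the form vanishes in all cases. For $\ell$ odd, the identity $\det N = u^2 - d w^2$ together with $v_\ell(d) \geq 1$ (since $C$ is ramified) yields $\bar u \neq 0 \Leftrightarrow b = 0$: if $b = 0$ then $n > a = a+b$ as soon as $n > a$ and every lift is automatically valid, while if $b \geq 1$ the form vanishes uniformly and the existence-of-lift argument gives count $\#\mathbb{T}$. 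The main obstacle is the bookkeeping across the various flavours of ramified Cartan (maximal and non-maximal, $\ell$ odd and $\ell = 2$) enumerated by Propositions \ref{prop:ConditionsCDodd} and \ref{prop:ConditionsCD2}, but in each case the dichotomy above and the uniform existence-of-lift argument close the proof.
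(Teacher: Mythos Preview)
Your argument has a genuine gap. The linear equation
\[
z + (2\bar u + \bar c \bar w)\, t_{11} + (\bar u \bar c - 2\bar d \bar w)\, t_{21} \equiv 0 \pmod \ell
\]
does \emph{not} characterise membership of $M' = M + \ell^n T$ in $\mathcal{M}_{a,b}(n+1)$ when $a < n < a+b$. It only characterises the weaker condition $\det_\ell(M'-I) \geqslant a+n+1$, i.e.\ membership in the auxiliary set $\mathcal{H}_{a,b}(n+1)$ of Theorem~\ref{thm:GeneralLift}. In the unramified situation these coincide because Theorem~\ref{thm:GeneralLift}(ii) proves $\mathcal{H}_{a,b}(n) = \mathcal{M}_{a,b}(n)$, but that step uses precisely the non-degeneracy of the linear form, which you have just shown fails in the ramified case. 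So your existence-of-lift argument only shows that every $M'$ lies in $\mathcal{H}_{a,b}(n+1)$; it says nothing about whether $M'$ lifts further to $\mathcal{M}_{a,b}$.

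Concretely, take $\ell$ odd, $d = \ell^2$, $a = 0$, $b = 4$, $n = 2$. Elements of $\mathcal{M}_{0,4}$ are $I + \begin{pmatrix} \ell u_1 & \ell^2 w \\ w & \ell u_1 \end{pmatrix}$ with (say) $v_\ell(u_1 - w) = 2$ and $u_1 + w$ a unit. All $\ell^2$ lifts $M'$ of $M \in \mathcal{M}_{0,4}(2)$ to $G(3)$ satisfy $\det_\ell(M'-I) \geqslant 3$, so the degenerate equation is satisfied by every $T$. But writing $M' = I + \begin{pmatrix} \ell U_1 & \ell^2 W \\ W & \ell U_1 \end{pmatrix}$ with $U_1 \in \Z/\ell^2\Z$ and $W \in \Z/\ell^3\Z$, one checks that $M' \in \mathcal{M}_{0,4}(3)$ forces $U_1 \equiv W \pmod{\ell^2}$: this pins down $U_1$ uniquely among the $\ell$ lifts of $u_1 \bmod \ell$, while $W$ remains free. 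Hence exactly $\ell$ of the $\ell^2$ lifts lie in $\mathcal{M}_{0,4}(3)$, not all $\ell^2$. The theorem is still true here---the count is $\ell$ independently of $M$---but your argument does not see this, and the paper's proof obtains both $\ell$ and $\ell^2$ as possible counts, depending on the relation between $n$, $a$, $b$ and $v_\ell(d)$. The paper handles this by a direct and rather delicate case analysis on $v_\ell(x)$, $v_\ell(d)$, and $v_\ell(x^2 - dy^2)$, including (when $d$ is a square times an even power of $\ell$) factoring $X^2 - dY^2$ over $\Z_\ell$; the linear-equation approach from Theorem~\ref{thm:GeneralLift} is simply not fine enough.
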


\begin{proof}
For $n \leqslant a$ the set $\mathcal{M}_{a,b}(n)$ consists at most of the identity matrix, so suppose $n>a$. Let $(0,d)$ be the parameters for the Cartan subgroup (for convenience, we do not use a different notation for $d$ and its reductions  modulo powers of $\ell$). The matrices in $\mathcal{M}_{a,b}(n)$ are of the form  
\begin{equation}\label{emmmmmm}
 M=I+\ell^a\begin{pmatrix}
x & dy \\ y & x
\end{pmatrix}
\end{equation}
where $x,y\in \Z/\ell^{n-a}\Z$ are not both divisible by $\ell$ and have lifts $X,Y\in \Z_\ell$ satisfying $v_\ell(X^2-dY^2)=b$.

\emph{If all matrices in $\mathcal{M}_{a,b}(n)$ satisfy $x \equiv 0 \pmod{\ell^{n-a}}$ then they all have the same number of lifts to $\mathcal{M}_{a,b}(n+1)$.} Since $y$ is a unit, for any $M_1, M_2 \in \mathcal{M}_{a,b}(n)$ there is an obvious bijection between the lifts of $M_1-I$ and of $M_2-I$ given by rescaling by a suitable unit.

\emph{If some $M_0\in \mathcal{M}_{a,b}(n)$ satisfies $x_0 \not \equiv 0 \pmod{\ell^{n-a}}$ and either $v_\ell(x_0^2)\neq v_\ell(d)$ or $v_\ell(x_0^2)=v_\ell(d)=v_\ell(x_0^2-dy_0^2)$ then every matrix in $\mathcal{M}_{a,b}(n)$ has $\ell^2$ lifts to $\mathcal{M}_{a,b}(n+1)$.}

It suffices to show that for $M\in \mathcal{M}_{a,b}(n)$ all lifts $X,Y$ of $x,y$ to $\Z_{\ell}$  satisfy $v_\ell(X^2-dY^2)=b$ because this implies that all lifts of $M$ to $G$ belong to $\mathcal M_{a,b}$.

If $v_\ell(x_0^2)< v_\ell(d)$ then for any $X_0,Y_0$ lifting $x_0,y_0$ we have $v_\ell(X_0^2-dY_0^2)=v_\ell(X_0^2)=v_\ell(x_0^2)$, so this number is independent of the lift and it is equal to $b$. In particular, we have $b<v_\ell(d)$ and $b<2(n-a)$. For $M\in \mathcal{M}_{a,b}(n)$ there exist lifts $X,Y \in \mathbb{Z}_\ell$ of $x,y$ that satisfy $v_\ell(X^2-dY^2)=b$. We deduce $v_\ell(X^2)=b$ and hence $v_\ell(x^2)\leqslant b$: since $v_\ell(x^2)< v_\ell(d)$ and $x \not \equiv 0 \pmod{\ell^{n-a}}$ we can  reason as for $M_0$ and we conclude.

If $v_\ell(x_0^2)> v_\ell(d)$ then $y_0$ must be a unit, so we have $v_\ell(d)=v_\ell(x_0^2-dy_0^2)$, and the same holds for all lifts $X_0,Y_0$. In particular, we have $b=v_\ell(d)<2(n-a)$.

If $v_\ell(x_0^2)=v_\ell(d)=v_\ell(x_0^2-dy_0^2)$, we write $x_0=\ell^k u_0$ and $d=\ell^{2k} \delta$, where $u_0, \delta$ are units and $k<n-a$. Then $u_0^2-\delta y_0^2$ is a unit and hence $v_\ell(U_0^2-\delta Y_0^2)=0$ for all lifts $U_0, Y_0$ of $u_0, y_0$. We deduce $v_\ell(X_0^2-dY_0^2)= v_\ell(d)$ for all lifts $X_0,Y_0$ of $x_0,y_0$ and again we have $b = v_\ell(d)<2(n-a)$.

So suppose $b = v_\ell(d)<2(n-a)$. For $M\in \mathcal{M}_{a,b}(n)$ there are lifts $X,Y$ of $x,y$ satisfying $v_\ell(X^2-dY^2)=b$ and hence $v_\ell(X^2)\geqslant v_\ell(d)$ and $v_\ell(x^2-dy^2)\leqslant v_\ell(d)$.
If $x \not \equiv 0 \pmod{\ell^{n-a}}$ then either $v_\ell(x^2) \neq v_\ell(d)$ or we have $v_\ell(x^2)= v_\ell(d)$ and $v_\ell(x^2-dy^2)=v_\ell(d)$, so we can reason as for $M_0$. If $x \equiv 0 \pmod{\ell^{n-a}}$ then $v_\ell(x^2)>v_\ell(d)$ and $y$ is a unit: we deduce $v_\ell(X^2-dY^2)=b$ for all lifts $X,Y$.

\emph{If some $M_0 \in \mathcal{M}_{a,b}(n)$ satisfies $x_0 \not \equiv 0 \pmod{\ell^{n-a}}$, $v_\ell(x_0^2)=v_\ell(d)$ and $v_\ell(x_0^2-dy_0^2)>v_\ell(d)$, then no $M\in \mathcal{M}_{a,b}(n)$ has $x \equiv 0 \pmod{\ell^{n-a}}$.} From $v_\ell(x_0^2-dy_0^2)>v_\ell(d)$ we deduce $b>v_\ell(d)$. Supposing that such an $M$ exists, let $X,Y$ be lifts of $x, y$ to $\mathbb{Z}_\ell$ such that $v_\ell(X^2-dY^2)=b$. 
Since $y$ must be a unit, $v_\ell(X^2-dY^2)>v_\ell(d)$ implies $v_\ell(X^2)=v_\ell(dY^2)=v_\ell(d)$. We deduce $v_\ell(x_0)=v_\ell(X) \geqslant v_\ell(x)$, which contradicts $v_\ell(x_0) < n-a \leqslant v_\ell(x)$.

\emph{Finally, if all $M \in \mathcal{M}_{a,b}(n)$ satisfy $x \not \equiv 0 \pmod{\ell^{n-a}}$, $v_\ell(x^2)=v_\ell(d)$ and $v_\ell(x^2-dy^2)>v_\ell(d)$ then the number of lifts of $M$ to $\mathcal{M}_{a,b}(n+1)$ only depends on $G,d,n,a,b$.} The hypotheses imply $v_\ell(x^2)=v_\ell(dy^2)$ and hence $y$ is a unit, otherwise neither $x$ nor $y$ would be units. We can write 
$d=\ell^{2k} \delta$, $x=\ell^{k} u$ and $X = \ell^k U$ where  $\delta,u,U$ are units.
We are counting the reductions modulo $\ell^{n-a+1}$ of the pairs $(X,Y)\in \Z_\ell^2$ that satisfy:
\begin{equation}\label{syss}
\begin{cases}
U \equiv u \pmod{\ell^{n-a-k}} \\
Y \equiv y \pmod{\ell^{n-a}} \\
v_\ell(U^2- \delta Y^2)=b-2k.
\end{cases}
\end{equation}

{Consider the case where $\ell$ is odd.} If $b-2k\leqslant n-a-k$, the third condition of \eqref{syss} is a consequence of the first two because it only depends on $U,Y$ through $u,y$ (since by assumption it holds for {some} lifts, it then holds for {all} lifts). So $M$ has $\ell^2$ lifts to $\mathcal{M}_{a,b}(n+1)$. Now suppose that $b-2k> n-a-k$.

We know that $\delta$ is a square in $\Z_\ell^\times$ because $\ell \mid u^2-\delta y^2$ and $\ell \nmid y$. Since $\ell$ is odd, we may assume without loss of generality that $u-\sqrt{\delta}y\equiv 0 \pmod{\ell}$ and $u+\sqrt{\delta}y\not\equiv 0 \pmod{\ell}$.
We may then rewrite the third condition of \eqref{syss} as
\begin{equation}\label{syss2}
U-\sqrt{\delta}\, Y \equiv 0 \pmod{\ell^{b-2k}}, \qquad U-\sqrt{\delta}\, Y \not \equiv 0 \pmod{\ell^{b-2k+1}}.
\end{equation}
If we choose $(Y \bmod {\ell^{n-a+1}})$ arbitrarily among the lifts of $y$, \eqref{syss2} uniquely determines the value of $(U \bmod {\ell^{n-a-k+1}})$, so $M$ has $\ell$ lifts  to $\mathcal{M}_{a,b}(n+1)$.

Now consider the case $\ell=2$. If $b-2k \leqslant n-a-k+1$ there are $4$ lifts for $M$ to $\mathcal{M}_{a,b}(n+1)$ because again the third condition of \eqref{syss} is a consequence of the first two: notice that $(u \bmod 2^{n-a-k})$ determines $(u^2 \bmod 2^{n-a-k+1})$, and likewise for $y$. Suppose instead that $b-2k > n-a-k+1$. If $\delta$ is a square in $\Z_2^\times$ we can proceed as for $\ell$ odd, where we may suppose $v_2(U-\sqrt{\delta}Y)=b-2k-1$ and $v_2(U+\sqrt{\delta}Y)=1$ because $U-\sqrt{\delta}Y$ and $U+\sqrt{\delta}Y$ are even and  not both divisible by 4. Thus $M$ has 2 lifts to $\mathcal{M}_{a,b}(n+1)$. Finally, suppose that $\delta$ is not a square in $\Z_2^\times$, i.e. $\delta \not \equiv 1 \pmod 8$. 
For all $X,Y\in \Z_2$ lifting $x,y$ we know that $Y$ is odd, and we have $$v_2(X^2-dY^2)=2k+v_2(U^2-\delta Y^2)=2k+ \begin{cases} 1, \text{ if }\delta \equiv 3 \pmod 4 \\ 2, \text{ if } \delta \equiv 5 \pmod 8\,.  \end{cases}$$

Since $v_2(X^2-dY^2)$ is independent of $X,Y$, the matrix $M$ has $4$ lifts to $\mathcal{M}_{a,b}(n+1)$.
\end{proof}

\subsection{Normalizers of ramified Cartan subgroups}

Recall from Proposition \ref{complement} that $\mathcal{M}^*_{a,b}=\emptyset$ if $\ell$ is odd and $a>0$, or if $\ell=2$ and $a>1$.

\begin{thm}\label{thm:LiftsNormalizer}
Let $G$ be open in the normalizer of a ramified Cartan subgroup $C$ of $\GL_2(\Z_\ell)$. 
Let $n_0$ be the level of $G$. Assume $a=0$ if $\ell$ is odd, and $a\in \{0,1\}$ if $\ell=2$. Let $n\geqslant 1$.

If $\ell$ is odd, define $\mathcal{N}_{a,b}(n)$ as the subset of $G(n) \setminus C(n)$ consisting of those matrices $M$ that satisfy the following conditions: 
\begin{itemize}
\item[$\bullet$] $\det_\ell(M-I) \geqslant n$, if $n \leqslant b$; $\det_\ell(M-I)=b$, if $n>b$.
\end{itemize}

If $\ell=2$, define $\mathcal{N}_{a,b}(n)$ as the subset of $G(n) \setminus C(n)$ consisting of those matrices $M$ that satisfy the following conditions:
\begin{itemize}
\item[$\bullet$] $M \equiv I \pmod{2^a}$, $M \not \equiv I \pmod{2^{a+1}}$;
\item[$\bullet$] $\det_2(M-I) \geqslant n+1$, if $n<2a+b$; $\det_2(M-I) = 2a+b$, if $n \geqslant 2a+b$.
\end{itemize}

Define for $\ell$ odd and $\ell=2$ respectively:
\[
f(n)=\left\{\begin{array}{ll}
\ell & \text{if $n<b$} \\
\ell(\ell-1) & \text{if $n=b$} \\
\ell^2 & \text{if $n>b$} 
\end{array}\right.
\qquad
f(n) = \left\{\begin{array}{ll}
2 & \text{if $ n < 2a+b$} \\
4 & \text{if $n \geqslant 2a+b$\,.}
\end{array}\right.
\]

\begin{enumerate}
\item[(i)] For every $n \geqslant n_0$ we have $\#\mathcal{N}_{a,b}(n+1)=f(n)\cdot  \#\mathcal{N}_{a,b}(n)$. More precisely, for every matrix in $\mathcal{N}_{a,b}(n)$
the number of lifts to $\mathcal{N}_{a,b}(n+1)$ is $f(n)$.
\item[(ii)] If $n \geqslant n_0$ or if $n>a+b$ we have $\mathcal{M}^*_{a,b}(n)=\mathcal{N}_{a,b}(n)$.
\end{enumerate}

\end{thm}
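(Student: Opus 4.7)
The plan is to follow the structure of the proof of Theorem~\ref{thm:GeneralLift}(i), replacing the tangent space $\mathbb{T}$ of $C$ with the tangent direction $\mathbb{T}_1$ of the non-identity coset $N\setminus C$ (see \eqref{defT}, specialised to $c=0$ since $C$ is ramified). The first step is to parametrise the $\ell^2$ lifts of $M\in(N\setminus C)(n)$ to $G(n+1)$ as $M'=L+\ell^n T$ for $T\in\mathbb{T}_1$, where $L$ is any fixed lift. Indeed, by Lemma~\ref{lemma:NormalizerAIsZero} together with $n_0\geq 2$ when $\ell=2$, the kernel of $G(n+1)\to G(n)$ lies in $C$, so every lift equals $L(I+\ell^n K)=L+\ell^n\cdot LK$ for some $K\in\mathbb{T}$; the map $K\mapsto LK$ is a bijection $\mathbb{T}\to\mathbb{T}_1$ (its matrix is $L\bmod\ell$, which is invertible because $\det M$ is a unit), and the resulting lifts stay in $N\setminus C$ since the latter is a single $C$-coset.

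Writing $L-I=\bigl(\begin{smallmatrix}z-1 & -dw\\ w & -z-1\end{smallmatrix}\bigr)$ and $T=\bigl(\begin{smallmatrix}s & -dt\\ t & -s\end{smallmatrix}\bigr)$, the key expansion is
\[
\det(M'-I)=\det(L-I)+\ell^n(-2zs+2dwt)+\ell^{2n}(-s^2+dt^2).
\]
For $\ell$ odd, using $v_\ell(d)=1$ and $2$ a unit, the right-hand side simplifies modulo $\ell^{n+1}$ to $\det(L-I)-2z\ell^n s$, a nondegenerate linear form in $s$ (with $z$ necessarily a unit, else $\det M$ would vanish mod $\ell$). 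The variable $t$ is unconstrained, so counting $(s,t)\in(\mathbb{Z}/\ell)^2$ meeting the required condition on $\det_\ell(M'-I)$ at level $n+1$ yields $f(n)\in\{\ell,\ell(\ell-1),\ell^2\}$ according to whether $n<b$, $n=b$ or $n>b$.

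For $\ell=2$ the linear coefficient has an extra factor of $2$, so I would work modulo $2^{n+2}$; the hypothesis $n_0\geq 2$ kills the quadratic term and leaves $\det(M'-I)\equiv\det(L-I)+2^{n+1}(-zs+dwt)\pmod{2^{n+2}}$. The linear form $(-zs+dwt)\bmod 2$ is nondegenerate in every case: for $a=0$, the two ramified subcases of Proposition~\ref{prop:ConditionsCD2} combined with the unit condition on $\det M$ force $(z,dw)\not\equiv(0,0)\pmod 2$; for $a=1$, one has $z$ odd and $w$ even, so the form reduces to $-zs$ with $z$ odd. Each required $\det_2$ condition at level $n+1$ then becomes a single nondegenerate linear equation on $(\mathbb{Z}/2)^2$, with exactly $2$ solutions when $n<2a+b$ and all $4$ solutions once $n\geq 2a+b$ (the equation is then trivially satisfied), yielding the stated $f(n)$.

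Claim~(ii) follows as in Theorem~\ref{thm:GeneralLift}(ii): the inclusion $\mathcal{M}^*_{a,b}(n)\subseteq\mathcal{N}_{a,b}(n)$ is immediate from Lemma~\ref{condi-det} applied to any lift in $\mathcal{M}^*_{a,b}$; conversely, because $f(k)>0$ for every $k\geq n_0$, any $M\in\mathcal{N}_{a,b}(n)$ can be lifted indefinitely by (i), and profinite compactness produces an element of $\mathcal{M}^*_{a,b}$ reducing to $M$. The main obstacle is the $\ell=2$ analysis: the extra factor of $2$ in the linear coefficient forces computation one $\ell$-adic unit higher, and the ramified classification of Proposition~\ref{prop:ConditionsCD2} is essential for verifying nondegeneracy of the residual linear form on the two-element field, with separate bookkeeping for the $a=0$ and $a=1$ subcases.
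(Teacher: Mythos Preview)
Your argument is correct and follows essentially the same route as the paper: expand $\det(M'-I)$ explicitly, identify the linear term in the $\mathbb{T}_1$-direction, and verify its nondegeneracy via the invertibility of $M$. For $\ell$ odd the paper phrases the nondegeneracy check by quoting equation~\eqref{eq:LiftsNormalizer} from the proof of Theorem~\ref{thm:GeneralLift}, whereas you recompute directly; for $\ell=2$ the paper merely says ``an explicit verification \ldots\ shows that there are precisely two lifts'', and you supply that verification in full. Your treatment of (ii) via successive lifting and compactness is equivalent to the paper's reduction to level $2a+b+1$.

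Two small inaccuracies do not affect validity but are worth correcting. First, for $\ell$ odd you write $v_\ell(d)=1$, but a ramified Cartan can have $v_\ell(d)\geqslant 2$ (the non-maximal case); your computation only uses $\ell\mid d$, so simply replace ``$v_\ell(d)=1$'' by ``$\ell\mid d$''. Second, for $\ell=2$ you invoke ``the two ramified subcases of Proposition~\ref{prop:ConditionsCD2}'', but that proposition characterises only the \emph{maximal} ramified Cartans; for a general ramified Cartan one only knows $c=0$. Your actual argument, however, does not need the classification: the dichotomy $d$ odd versus $d$ even, combined with $\det M\in\Z_2^\times$, already forces $(z,dw)\not\equiv(0,0)\pmod 2$ in every case, so the nondegeneracy claim stands.
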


\begin{proof}
We first prove (i). The parameters for $C$ are $(0,d)$, where $\ell \mid d$ if $\ell$ is odd, and by Lemma \ref{lemma:NormalizerAIsZero} any matrix in $G \setminus C$ is of the form 
\begin{equation}\label{nieuw}
M=\begin{pmatrix}
x & d y \\ -y & -x
\end{pmatrix}\,.
\end{equation}

\textit{The case $\ell$ odd ($n \geqslant n_0$ and $a=0$).} If $b<n$, every lift of a matrix in $\mathcal{N}_{0,b}(n)$ to $G(n+1)$ is in $\mathcal{N}_{0,b}(n+1)$.
If $b\geqslant n>0$ we proceed as for Theorem \ref{thm:GeneralLift}, noticing two facts: by Proposition \ref{complement} no matrix in $G\setminus C$ is congruent to the identity modulo $\ell$; the coefficient of $t_{11}$ in \eqref{eq:LiftsNormalizer} is nonzero because $\mathrm{det}_\ell(M-I)>0$ gives $x^2 \equiv 1 \pmod{\ell}$, and we have $n_{11}+1\equiv x \pmod{\ell}$.

\textit{The case $\ell=2$ ($n \geqslant n_0$ and $a\in \{0,1\}$).} Remark that $(M \bmod 2^n)$ determines $\det(M-I)$ modulo $2^{n+1}$. In particular, $\mathcal{N}_{a,b}(n)$ is well-defined.
Fix $M\in \mathcal{N}_{a,b}(n)$, and let $L$ be a lift of $M$ to $G(n+1)$.
Since $n\geqslant 2$, we know $L \equiv I \pmod{2^a}$ and $L \not \equiv I \pmod{2^{a+1}}$. If $n \geqslant 2a+b$, by the above remark all $4$ lifts of $M$ to $G(n+1)$ are in $\mathcal{N}_{a,b}(n+1)$. If $n < 2a+b$, we  have $\det_2(M-I)\geqslant n+1$ and hence $\det_2(L-I)\geqslant n+1$: writing any lift of $M$ in the form $L'=L+2^n T$ with $T$ as in \eqref{defT}, we are left to verify $\det_2(L'-I)\geqslant n+2$ for $n+1<2a+b$ and $\det_2(L'-I)= n+1$ for $n+1=2a+b$. We thus  study the inequality $\mathrm{det}_2\left( (L-I) + 2^n T\right) \geqslant n+2$
and an explicit verification (by Lemma \ref{lemma:NormalizerAIsZero} and because $2^{n+2}\mid 2^{2n}$) shows that there are precisely two lifts in $\mathcal{N}_{a,b}(n+1)$ as claimed.

We can prove (i)$\Rightarrow$(ii) as for Theorem \ref{thm:GeneralLift}: we clearly have  $f(n)\neq 0$ for all $n\geqslant 1$, and we have $\mathcal{N}_{a,b}(2a+b+1)=\mathcal{M}^*_{a,b}(2a+b+1)$ because the defining conditions hold for a matrix if and only if they hold for its lifts to $G$. 
\end{proof}

\section{Measures related to the $1$-Eigenspace}

\subsection{The case of $\GL_2(\Z_\ell)$ and unramified Cartan subgroups}

\begin{prop}\label{prop:FinitelyManyCab}
Suppose that $G$ is open either in $\GL_2(\Z_\ell)$ or in the normalizer of an unramified  Cartan subgroup of $\GL_2(\Z_\ell)$. Suppose $\mathcal{M}_{a,b}\neq \emptyset$. We have 
$\mathcal{M}_{a,b}(n_0)=\{I\}$ if $n_0 \leqslant a$ and 
$\mathcal{M}_{a,b}(n_0)=\mathcal{M}_{a,n_0-a}(n_0)$ if $a<n_0 \leqslant a+b$, and in particular we have:
$$\mu_{a,b}(n_0)=\left\{
\begin{array}{ll}
 \#G(n_0)^{-1} & \text{if $n_0 \leqslant a $}\\
\mu_{a,n_0-a}(n_0) & \text{if $a<n_0 \leqslant a+b$\,.}\\
\end{array}
  \right. $$
\end{prop}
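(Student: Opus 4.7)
The plan is to reduce everything to Theorem \ref{thm:GeneralLift}: part (ii) identifies $\mathcal{M}_{a,b}(n_0)$ with $\mathcal{H}_{a,b}(n_0)$ whenever $\mathcal{M}_{a,b}\neq \emptyset$, while the conditions defining $\mathcal{H}_{a,b}(n_0)$ depend on $b$ only through the inequality $n_0 \leqslant a+b$. Once these two facts are combined, the measure formulas are immediate from $\mu_{a,b}(n_0)=\#\mathcal{M}_{a,b}(n_0)/\#G(n_0)$.

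The case $n_0 \leqslant a$ is immediate: every $M \in \mathcal{M}_{a,b}$ satisfies $M \equiv I \pmod{\ell^a}$, so $\mathcal{M}_{a,b}(n_0)\subseteq \{I\}$, and non-emptiness forces equality. For the case $a<n_0\leqslant a+b$, set $b':=n_0-a\geqslant 1$. By Theorem \ref{thm:GeneralLift}(ii) we have $\mathcal{M}_{a,b}(n_0)=\mathcal{H}_{a,b}(n_0)$. Inspecting the defining conditions, the congruence requirements only depend on $a$ and $n_0$, while the determinant condition takes the form $\det_\ell(M-I)\geqslant a+n_0$ in both cases, because both $n_0\leqslant a+b$ and $n_0\leqslant a+b'$ hold (the latter as an equality). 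Hence $\mathcal{H}_{a,b}(n_0)=\mathcal{H}_{a,b'}(n_0)$.

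To conclude I must show $\mathcal{M}_{a,b'}\neq \emptyset$, which is the main point of the argument. For $n\geqslant n_0>a$ the values $f(n)$ attached to the pair $(a,b')$ belong to $\{\#\mathbb{T}\cdot\ell^{-1},\,\#\mathbb{T}(1-\ell^{-1}),\,\#\mathbb{T}\}$ and are thus strictly positive; moreover $\mathcal{H}_{a,b'}(n_0)=\mathcal{H}_{a,b}(n_0)$ is non-empty. By Theorem \ref{thm:GeneralLift}(i) I can lift any element of $\mathcal{H}_{a,b'}(n_0)$ successively, and since $G$ is profinite the resulting compatible system yields an $M_\infty\in G$ whose reduction modulo $\ell^m$ lies in $\mathcal{H}_{a,b'}(m)$ for every $m\geqslant n_0$. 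For $m>a+b'$ the conditions of $\mathcal{H}_{a,b'}(m)$ coincide with those of Lemma \ref{condi-det} for the pair $(a,b')$, so $M_\infty\in \mathcal{M}_{a,b'}$. Applying Theorem \ref{thm:GeneralLift}(ii) once more yields $\mathcal{M}_{a,b'}(n_0)=\mathcal{H}_{a,b'}(n_0)=\mathcal{M}_{a,b}(n_0)$, and dividing by $\#G(n_0)$ gives the claimed measure formulas.
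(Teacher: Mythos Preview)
Your proof is correct and follows essentially the same route as the paper's: identify $\mathcal{M}_{a,b}(n_0)$ with $\mathcal{H}_{a,b}(n_0)$ via Theorem~\ref{thm:GeneralLift}(ii), observe that $\mathcal{H}_{a,b}(n_0)=\mathcal{H}_{a,n_0-a}(n_0)$ from the definitions, and then check $\mathcal{M}_{a,n_0-a}\neq\emptyset$ to apply Theorem~\ref{thm:GeneralLift}(ii) again. The only difference is that for the non-emptiness of $\mathcal{M}_{a,b'}$ you lift through all levels and invoke a profinite compactness argument, whereas the paper notes that a \emph{single} lifting step suffices: since $n_0=a+b'$, one has $n_0+1>a+b'$, so $\mathcal{H}_{a,b'}(n_0+1)=\mathcal{M}_{a,b'}(n_0+1)$ directly by the ``$n>a+b$'' clause of Theorem~\ref{thm:GeneralLift}(ii), and non-emptiness of this finite set already gives $\mathcal{M}_{a,b'}\neq\emptyset$.
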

\begin{proof}
For $n_0 \leqslant a$ the set $\mathcal{M}_{a,b}(n_0)$ contains at most the identity and it is non-empty by the assumption on $\mathcal{M}_{a,b}$. Now suppose $a<n_0 \leqslant a+b$. We claim that $\mathcal{M}_{a,n_0-a} \neq \emptyset$: the statement then follows from Theorem \ref{thm:GeneralLift} (ii) because by definition $\mathcal{H}_{a,b}(n_0)=\mathcal{H}_{a,n_0-a}(n_0)$.

We prove the claim by making use of Theorem \ref{thm:GeneralLift}. The assumption $\mathcal{M}_{a,b}\neq \emptyset$ implies that the set $\mathcal{M}_{a,b}(n_0)=\mathcal{H}_{a,b}(n_0)=\mathcal{H}_{a,n_0-a}(n_0)$ is nonempty. Since $n_0>a$ we have $f(n_0)\neq 0$ and hence $\mathcal{H}_{a,n_0-a}(n_0+1)$ is nonempty. This set equals $\mathcal{M}_{a,n_0-a}(n_0+1)$ because $n_0+1>a+(n_0-a)$.
\end{proof}

\begin{prop}\label{prop:GeneralCount}
Let $G'$ be either $\GL_2(\Z_\ell)$, an unramified Cartan subgroup of $\GL_2(\Z_\ell)$, or the normalizer of an unramified Cartan subgroup of $\GL_2(\Z_\ell)$. If $G$ is open in $G'$, we have:
\[
\mu_{a,b} = \mu_{a,b}(n_0) \cdot \left\{ \begin{array}{ll}
\#\mathbb{T}^{-(a+1-n_0)}\cdot  \#\mathbb{T}^\times & \text{if $n_0 \leqslant a, b=0$} \\
\#\mathbb{T}^{-(a+1-n_0)} \cdot (\#\mathbb{T}-\#\mathbb{T}^\times-1)\cdot  \ell^{-b}(\ell-1) & \text{if $n_0 \leqslant a, b>0$} \\
\ell^{-(a+b+1-n_0)} (\ell-1) & \text{if $a < n_0 \leqslant a+b$} \\
1 & \text{if $n_0>a+b$}\,.
\end{array}\right.
\]

We also have \[
\mu_{a,b} =  [G':G]\cdot  \#\mathbb{T}^{-a} \cdot \varepsilon \cdot \left\{ \begin{array}{ll}
1 & \text{if $n_0 \leqslant a, b=0$}  \\
\displaystyle \frac{\#\mathbb{T}-\#\mathbb{T}^\times-1}{\#\mathbb{T}^\times} \cdot \ell^{-b} (\ell-1) & \text{if $n_0 \leqslant a, b>0$}  \\
\end{array}\right.
\]
where $\varepsilon=\frac{1}{2}$ if $G'$ is the normalizer of a Cartan subgroup and $\varepsilon=1$ otherwise.
\end{prop}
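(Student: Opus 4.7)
The idea is to compute $\mu_{a,b}=\mu_{a,b}(n)$ for $n>a+b$ (Lemma \ref{lemma:EverythingHasWellDefinedAB}) by telescoping the ratios $\mu_{a,b}(n+1)/\mu_{a,b}(n)$ from $n_0$ upwards. Writing $\mu_{a,b}(n)=\#\mathcal{M}_{a,b}(n)/\#G(n)$, equation \eqref{dimG} shows that the denominator is multiplied by $\ell^{\dim G'}=\#\mathbb{T}$ when $n$ is replaced by $n+1$, while Theorem \ref{thm:GeneralLift}(i) shows that the numerator is multiplied by $f(n)$. The cases $G'=\GL_2(\Z_\ell)$ and $G'$ the normalizer of an unramified Cartan are covered directly; the case $G'$ an unramified Cartan subgroup $C$ is handled by viewing $G$ as an open subgroup of the normalizer $N$ and noting that, since $\mathcal{M}_{a,b}\subseteq C$ and $C(n)$, $(N\setminus C)(n)$ are disjoint by Remark \ref{nor}, all lifts of a matrix in $\mathcal{M}_{a,b}(n)\subseteq C(n)$ to $G(n+1)$ lie in $C(n+1)$, so they coincide with the lifts counted by Theorem \ref{thm:GeneralLift}.

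Setting $n_1:=\max(n_0,a+b+1)$, so that $\mu_{a,b}=\mu_{a,b}(n_1)$, the previous paragraph yields
\[
\mu_{a,b}=\mu_{a,b}(n_0)\cdot\prod_{n=n_0}^{n_1-1}\frac{f(n)}{\#\mathbb{T}},
\]
and the first displayed formula of the proposition follows by a direct four-way case analysis depending on the position of $n_0$ with respect to $a$ and $a+b$. For instance, when $n_0\leqslant a$ and $b>0$ the product contains $(a-n_0)$ trivial factors, a factor $\#\mathbb{T}-\#\mathbb{T}^\times-1$ at $n=a$, $(b-1)$ factors $\#\mathbb{T}/\ell$ for $a<n<a+b$ and a factor $\#\mathbb{T}(\ell-1)/\ell$ at $n=a+b$; dividing by $\#\mathbb{T}^{a+b+1-n_0}$ simplifies to the claimed expression $\#\mathbb{T}^{-(a+1-n_0)}(\#\mathbb{T}-\#\mathbb{T}^\times-1)(\ell-1)\ell^{-b}$. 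The remaining cases are entirely analogous, and the case $n_0>a+b$ is automatic since then $\mu_{a,b}(n_0)=\mu_{a,b}$ already.

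For the second formula we are in the range $n_0\leqslant a$, where Proposition \ref{prop:FinitelyManyCab} gives $\mu_{a,b}(n_0)=1/\#G(n_0)$. By \eqref{n0-forH} we have $\#G(n_0)=\#G'(n_0)/[G':G]$, and by \eqref{dimG} applied to $G'$ itself (valid for all $n\geqslant 1$ in the unramified setting) we have $\#G'(n_0)=\#G'(1)\cdot\#\mathbb{T}^{n_0-1}$. A case-by-case inspection, using Remark \ref{nor} in the normalizer case, shows that $\#G'(1)=\varepsilon^{-1}\#\mathbb{T}^\times$ in all three situations, which is precisely how the factor $\varepsilon$ enters the final answer. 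Substituting into the first formula and collecting terms yields the second. The only real difficulty is bookkeeping: one must check that the exponents of $\ell$, $\#\mathbb{T}$ and $\#\mathbb{T}^\times$ combine correctly across the various ranges, but no input beyond Theorem \ref{thm:GeneralLift} and Proposition \ref{prop:FinitelyManyCab} is required.
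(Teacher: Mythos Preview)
Your approach is essentially the paper's: telescope $\mu_{a,b}(n)$ from $n_0$ to $a+b+1$ using Theorem~\ref{thm:GeneralLift}(i) for the numerator and \eqref{dimG} for the denominator, then for the second formula substitute $\mu_{a,b}(n_0)=1/\#G(n_0)$ via Proposition~\ref{prop:FinitelyManyCab} and rewrite $\#G(n_0)$ in terms of $\#G'(1)$.

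There is one genuine gap. Both Theorem~\ref{thm:GeneralLift}(ii) (needed to identify $\mathcal{M}_{a,b}(n)$ with $\mathcal{H}_{a,b}(n)$ so that the ratio is $f(n)$) and Proposition~\ref{prop:FinitelyManyCab} carry the hypothesis $\mathcal{M}_{a,b}\neq\emptyset$, which you never address. For the first displayed formula this is harmless, since $\mathcal{M}_{a,b}=\emptyset$ forces $\mu_{a,b}=\mu_{a,b}(n_0)=0$ and the identity becomes $0=0$. For the second formula, however, emptiness gives $\mu_{a,b}=0$ on the left, whereas the right-hand side is $[G':G]\cdot\#\mathbb{T}^{-a}\cdot\varepsilon$ (for $b=0$) or $[G':G]\cdot\#\mathbb{T}^{-a}\cdot\varepsilon\cdot(\#\mathbb{T}-\#\mathbb{T}^\times-1)\cdot(\ell-1)\ell^{-b}/\#\mathbb{T}^\times$ (for $b>0$), and neither of these is \emph{a priori} zero. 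You need to invoke Theorem~\ref{thm:GeneralLift}(iii): since $a\geqslant n_0$, the set $\mathcal{M}_{a,b}$ is empty only when $b>0$ and $\#\mathbb{T}-\#\mathbb{T}^\times-1=0$, in which case the right-hand side does vanish. This is exactly how the paper closes the argument.

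A minor remark on your reduction of the Cartan case to the normalizer case: the real point is that, since $[N(n):C(n)]=2$ for all $n\geqslant 1$ in the unramified setting (Remark~\ref{nor}), the level of $G$ computed inside $N$ coincides with its level inside $C$, so Theorem~\ref{thm:GeneralLift} applies with the same $n_0$. Your remark about lifts staying in $C(n+1)$ is true but not quite the issue, since $G(n+1)$ is the same set regardless of the ambient group.
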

\begin{proof}
To prove the first assertion we may suppose $\mathcal{M}_{a,b}\neq \emptyset$, because otherwise $\mu_{a,b}=\mu_{a,b}(n_0)=0$. The formula for $n_0>a+b$ has been proven in Lemma \ref{lemma:EverythingHasWellDefinedAB}. We have
\begin{equation*}
\#\mathcal{M}_{a,b}(a+b+1)=\#\mathcal{M}_{a,b}(n_0) \prod_{j=n_0}^{a+b} \frac{\#\mathcal{M}_{a,b}(j+1)}{\#\mathcal{M}_{a,b}(j)}
\end{equation*}
and by definition of $n_0$ we know $\#G(a+b+1)=\#G(n_0) \cdot \#\mathbb{T}^{a+b+1-n_0}$. We then obtain 
\begin{equation*}
\mu_{a,b} = \mu_{a,b}(n_0) \cdot \prod_{j=n_0}^{a+b} \#\mathbb{T}^{-1}\cdot  \frac{\#\mathcal{M}_{a,b}(j+1)}{\#\mathcal{M}_{a,b}(j)}
\end{equation*}
and the formulas for $n_0 \leqslant a+b$ can easily be deduced from Theorem \ref{thm:GeneralLift}.
We now turn to the second assertion. 
By Theorem \ref{thm:GeneralLift} (iii), $b=0$ implies $\mathcal M_{a,b}\neq  \emptyset$ while $b>0$ and $\mathcal M_{a,b}= \emptyset$ imply $\#\mathbb{T}-\#\mathbb{T}^\times-1=0$. In the latter case the formula for $\mu_{a,b}$ clearly holds, so we can assume $\mathcal M_{a,b}\neq \emptyset$ and hence 
$\mu_{a,b}(n_0)=\#G(n_0)^{-1}$ by Proposition \ref{prop:FinitelyManyCab}.
By Remark \ref{nor} and Lemma \ref{lemma:CardinalityCartans} (respectively, by Definition \ref{LieGL})
we know that $\#G'(1)=\varepsilon^{-1}\cdot \#\mathbb{T}^\times$ and $\#G'(n_0)=\#G'(1) \cdot \#\mathbb{T}^{n_0-1}$. We conclude because we have
\[
\#G(n_0)^{-1}=[G':G]\cdot  (\#G'(n_0))^{-1}=[G':G] \cdot \varepsilon \cdot (\#\mathbb{T}^\times)^{-1} \cdot \#\mathbb{T}^{1-n_0}.
\]
\end{proof}

\begin{exa}
Let $G$ be the inverse image in $\GL_2(\Z_2)$ of  
$$
G(2)=\langle
\begin{pmatrix}
3 & 3 \\ 0 & 1
\end{pmatrix}, \begin{pmatrix}
1 & 1 \\ 3 & 0
\end{pmatrix}
\rangle  \subset \GL_2(\Z/4\Z).
$$
Since $G$ has index $8$ and level $2$ in $\GL_2(\Z_2)$, by Proposition \ref{prop:FinitelyManyCab} we get $\mu_{a,b}(2)=1/12$ if $a\geqslant 2$ and $\mu_{a,b}(2)=\mu_{a,2-a}(2)$ if $a=0,1$ and $a+b\geqslant 2$. A direct computation gives $\mu_{0,0}(2)=1/3$, $\mu_{1,0}(2)=1/12$, $\mu_{0,2}(2)=1/2$ and 
$\mu_{0,1}(2)=\mu_{1,1}(2)=0$. So by Proposition \ref{prop:GeneralCount} we have:
\[
\mu_{a,b}=\left\{\begin{array}{lllll}
0 & \textit{ if $ a\in \{0,1\}, b=1$} \\ 
{1}/{3} & \textit{ if  $a=b=0$} \\ 
{1}/{12} & \textit{ if  $a=1, b=0$} \\ 
2^{-b} & \textit{ if $a=0, b \geqslant 2$} \\ 
8 \cdot 2^{-4a} & \textit{ if $a \geqslant 2, b=0$} \\  
12 \cdot 2^{-4a-b} & \textit{ if  $a \geqslant 2, b>0$}\,. 
\end{array}\right.
\]
\end{exa}

\begin{lem}\label{finitea}
Suppose that $G$ is open in a Cartan subgroup of $\GL_2(\Z_\ell)$. Let $n_0$ be the level of $G$. For all $a \geqslant n_0$ we have $\mu_{a,b}=\ell^{-2(a-n_0)}\mu_{n_0,b}$.
\end{lem}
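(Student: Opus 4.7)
The strategy is to evaluate $\mu_{a,b}$ through the formula $\mu_{a,b} = \#\mathcal{M}_{a,b}(n)/\#G(n)$, valid for any $n>a+b$ by Lemma~\ref{condi-det}, taking $n=a+b+1$. The plan is to show the numerator $\#\mathcal{M}_{a,b}(a+b+1)$ is \emph{independent of $a$} (for $a\geqslant n_0$), while the denominator $\#G(a+b+1)$ grows precisely by a factor of $\ell^{2(a-n_0)}$ compared to $\#G(n_0+b+1)$. For the denominator this is immediate: since $a\geqslant n_0$ and the Cartan subgroup has dimension $2$, equation~\eqref{dimG} gives $\#G(a+b+1)=\ell^{2(a-n_0)}\cdot \#G(n_0+b+1)$.

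For the numerator, let $(c,d)$ be parameters for the ambient Cartan subgroup $C$ as in Theorem~\ref{quadratic-rings}. Because $a\geqslant n_0$ and $G$ is the preimage of $G(n_0)$ under $C\to C(n_0)$, every matrix $M\in C$ with $M\equiv I\pmod{\ell^a}$ automatically lies in $G$. Using \eqref{nfaa}, any such $M$ can be uniquely written as $M=I+\ell^a A$ with
\[
A=\begin{pmatrix}\alpha & d\beta\\ \beta & \alpha+\beta c\end{pmatrix},\qquad \alpha,\beta\in\Z_\ell.
\]
By Lemma~\ref{condi-det} the condition $M\in\mathcal{M}_{a,b}$ translates into the two conditions $A\not\equiv 0\pmod{\ell}$ and $\det_\ell(A)=b$. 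A short calculation based on the congruence $\det(A+\ell^{b+1}B)\equiv \det(A)\pmod{\ell^{b+1}}$ shows that both conditions depend only on the class of $A$ modulo $\ell^{b+1}$, that is, only on the class of $M$ modulo $\ell^{a+b+1}$.

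Consequently the assignment $M\mapsto (A\bmod \ell^{b+1})$ yields a bijection between $\mathcal{M}_{a,b}(a+b+1)$ and the set
\[
\mathcal{A}_b := \bigl\{A\bmod \ell^{b+1}\text{ of the above Cartan form}: A\not\equiv 0\pmod\ell,\ \det_\ell(A)=b\bigr\},
\]
which is intrinsic to $C$ and $b$ and does not depend on $a$. Running the same argument with $a$ replaced by $n_0$ exhibits a bijection between $\mathcal{M}_{n_0,b}(n_0+b+1)$ and the same set $\mathcal{A}_b$. Hence $\#\mathcal{M}_{a,b}(a+b+1)=\#\mathcal{M}_{n_0,b}(n_0+b+1)$, and combining with the denominator identity gives $\mu_{a,b}=\ell^{-2(a-n_0)}\mu_{n_0,b}$. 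The argument is uniform across split, nonsplit and ramified Cartan subgroups, since only the parametrization~\eqref{nfaa} is used; there is no genuine obstacle, only the bookkeeping check that the defining conditions of $\mathcal{M}_{a,b}$ descend to the class of $A$ modulo $\ell^{b+1}$.
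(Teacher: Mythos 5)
Your argument is correct, but it takes a genuinely different route from the paper's. The paper proceeds by induction on $a$: it introduces the rescaling map $\phi:\mathcal{M}_{a,b}(a+b+2)\to\mathcal{M}_{a+1,b}(a+b+2)$, $M\mapsto I+\ell(M-I)$, and shows it is well defined, surjective and $\ell^2$-to-one, the fibre count being supplied by Theorem \ref{thm:GeneralLift} (i)--(ii); since both sets are compared inside the same finite group $G(a+b+2)$, the factor $\ell^{-2}$ arises in the numerator. You instead fix $b$ and identify every $\mathcal{M}_{a,b}(a+b+1)$ with one and the same set $\mathcal{A}_b$ of Cartan-form classes modulo $\ell^{b+1}$, using \eqref{nfaa}, Lemma \ref{condi-det}, and the observation that the conditions ``$A\not\equiv 0\pmod{\ell}$ and $\det_\ell A=b$'' depend only on $A\bmod \ell^{b+1}$; the entire $a$-dependence then sits in the denominator through \eqref{dimG}. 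This bypasses Theorem \ref{thm:GeneralLift} and the induction altogether, at the cost of one point you leave implicit: surjectivity onto $\mathcal{A}_b$ needs the remark that for any Cartan-form lift $A\in\operatorname{Mat}_2(\Z_\ell)$ the matrix $I+\ell^a A$ has determinant $\equiv 1\pmod{\ell}$ (because $a\geqslant n_0\geqslant 1$), hence lies in $C$, and lies in $G$ because $a\geqslant n_0$; with that one line added your proof is complete. Both arguments rest on the same structural fact --- inside a Cartan subgroup, matrices congruent to $I$ modulo $\ell^a$ are parametrized by two coordinates and membership in $\mathcal{M}_{a,b}$ is a determinant-valuation condition --- but yours is the more self-contained, while the paper's reuses its general lifting theorem and compares consecutive values of $a$ at a common level.
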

\begin{proof}
We prove that for all $a\geqslant n_0$ we have $\mu(\mathcal{M}_{a+1,b})=\ell^{-2} \mu(\mathcal{M}_{a,b})$. We claim that the map 
\[
\begin{array}{cccc}
\phi: & \mathcal{M}_{a,b}(a+b+2) & \to & \mathcal{M}_{a+1,b}(a+b+2) \\
& M & \mapsto & I+\ell(M-I)
\end{array}
\]
is well-defined, surjective and $\ell^2$-to-$1$, so we have: 
\[
\mu(\mathcal{M}_{a+1,b})=\frac{\#\mathcal{M}_{a+1,b}(a+b+2) }{\#G(a+b+2)} = \frac{\ell^{-2} \#\mathcal{M}_{a,b}(a+b+2)}{\#G(a+b+2)}=\ell^{-2} \mu(\mathcal{M}_{a,b})\,.
\]
We are left to prove the claim. Since $a\geqslant n_0$, all matrices in the Cartan subgroup that are congruent to the identity modulo $\ell^a$ are in $G$ thus we may suppose that $G$ is the Cartan subgroup.
 A matrix $M\in G(a+b+2)$ is in the domain of $\phi$ if and only if the conditions in \eqref{conditions-Mab} hold, and these imply that $\phi(M)$ is in $\mathcal{M}_{a+1,b}(a+b+2)$ by \eqref{nfaa} and because we have:
$$\phi(M) \equiv I \pmod{\ell^{a+1}}\qquad  \phi(M )\not \equiv I \pmod{\ell^{a+2}}\qquad \mathrm{det}_{\ell}(\phi(M)-I)=2(a+1)+b.$$ If $N$ is in the codomain of $\phi$ then $I+\ell^{-1}(N-I)$ is well-defined modulo $\ell^{a+b+1}$: by Lemma \ref{condi-det} this matrix  belongs to $\mathcal{M}_{a,b}(a+b+1)$ and if $M$ is any lift of it to $\mathcal{M}_{a,b}(a+b+2)$ we have $\phi(M)=N$. 
This proves that $\phi$ is surjective (we may suppose that domain and codomain are nonempty, otherwise they must both be empty and the statement holds trivially). The set of preimages of $N$ consists of the matrices in $\mathcal{M}_{a,b}(a+b+2)$ congruent to $M$ modulo $\ell^{a+b+1}$, thus there are $\ell^2$ such preimages by Theorem \ref{thm:GeneralLift} (i)-(ii).
\end{proof}

\begin{rem}\label{rem:emptyness}
For every $a,b\geqslant 0$ we have $\mathcal M_{a,b}(\GL_2(\mathbb Z_\ell))\neq \emptyset$ because this set contains 
\[\qquad \begin{pmatrix} 2 & 1 \\ 1 & 1 \end{pmatrix} \quad\text{for $a=b=0$ ,\; and}\qquad
 \begin{pmatrix} 1 & \ell^{a+b}\\ \ell^a & 1 \end{pmatrix}\qquad\text{otherwise}.\]

If $C$ is a split Cartan subgroup of $\GL_2(\mathbb{Z}_\ell)$, we have $\mathcal M_{a,b}(C)\neq \emptyset$ for every $a,b\geqslant 0$, with the exception of $\ell=2$ and $a=0$:
 considering the diagonal model, if $\ell \neq 2$ or if $a \geqslant 1$ the set  $\mathcal M_{a,b}(C)$ contains 
$
\begin{pmatrix}
1+ \ell^{a}& 0\\
0 & 1+ \ell^{a+b}
\end{pmatrix};
$
however, for $\ell=2$ every diagonal invertible matrix is congruent to the identity modulo $2$.

If $C$ is a nonsplit Cartan subgroup of $\GL_2(\mathbb{Z}_\ell)$, we have $\mathcal M_{a,b}(C)= \emptyset$ for every $b>0$.
Indeed, if $M\in \mathcal M_{a,b}(C)$ then for $\ell$ odd (resp. $\ell=2$) we have 
$$\ell^{-a}(M-I)= \begin{pmatrix} z & dw \\ w & z \end{pmatrix}\qquad \text{resp.}\qquad 2^{-a}(M-I)= \begin{pmatrix} z & dw \\ w & z+w \end{pmatrix} $$ 
for some $z,w\in \Z_{\ell}$, and by Propositions \ref{prop:ConditionsCDodd} and \ref{prop:ConditionsCD2} these matrices are invertible unless $z$ and $w$ are  zero modulo $\ell$.
\end{rem}

\subsection{Ramified Cartan subgroups}

\begin{lem}\label{lem:ReductionCartan}
Suppose that $G$ is open in a Cartan subgroup $C$ of $\GL_2(\mathbb{Z}_\ell)$ with parameters $(0,d)$.
Write $d=m \ell^{v}$ with $\ell\nmid m$.
\begin{enumerate}
\item[\textit{(i)}] If $v$ is odd, we have $\mu_{a,b}=0$ for every $b>v$.
\item[\textit{(ii)}] If $v$ is even and $m$ is not a square in $\Z_\ell^\times$, we have $\mu_{a,b}=0$ for every $b>v+2$ (if $\ell$ is odd we have $\mu_{a,b}=0$ for $b>v$).
\item[\textit{(iii)}] If $v$ is even and $m$ is a square in $\Z_\ell^\times$, consider the  Cartan subgroup $C'$ of $\GL_2(\Z_\ell)$ with parameters $(0,1)$. There exists a closed subgroup $G_1$ of $C'$ such that the following holds: there is an explicit isomorphism between $G$ and $G_1$; the level of $G_1$ does not exceed the level of $G$ by more than ${v}/{2}$; 
for all $b>v$ the sets $\mathcal{M}_{a, b}(G)$ and 
$\mathcal{M}_{a+{v}/{2}, b-v}(G_1)$ have the same Haar measure in $G$ and $G_1$ respectively.
\end{enumerate}
\end{lem}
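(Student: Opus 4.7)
The plan is to analyse the three parts separately, using throughout the explicit parametrisation \eqref{nfaa}: writing $M - I = \ell^a \begin{pmatrix} x & dy \\ y & x \end{pmatrix}$ with $(x,y)$ not both divisible by $\ell$, the condition $M \in \mathcal{M}_{a,b}(C)$ amounts to $v_\ell(x^2 - dy^2) = b$ by Lemma \ref{condi-det}. For (i), since $v$ is odd, the valuations $v_\ell(x^2) = 2v_\ell(x)$ and $v_\ell(dy^2) = v + 2v_\ell(y)$ have opposite parities and thus always differ, so $v_\ell(x^2 - dy^2) = \min\{v_\ell(x^2), v_\ell(dy^2)\}$; pushing this minimum past $v$ would force $v_\ell(y) \geqslant 1$ and $v_\ell(x) > v/2 \geqslant 0$, contradicting ``not both divisible by $\ell$''. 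For (ii), I would split again according to whether $v_\ell(x^2) = v_\ell(dy^2)$; in the equal case I would write $x = \ell^{v/2+k} u$, $y = \ell^k t$ with $u, t$ units, and use that $m$ is a non-square to bound $v_\ell(u^2 - m t^2)$ from above (by $0$ when $\ell$ is odd, by $2$ when $\ell = 2$, since then $m \not\equiv 1 \pmod 8$); pushing $b$ past the allowed threshold would again force $k \geqslant 1$ and hence both $\ell \mid x$ and $\ell \mid y$, a contradiction.

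For (iii), the ring $R = \Z_\ell[\omega]$ with $\omega^2 = d$ embeds into $R_1 = \Z_\ell[\omega_1]$ with $\omega_1^2 = 1$ via $\omega \mapsto s\ell^{v/2}\omega_1$, well defined since $(s\ell^{v/2}\omega_1)^2 = s^2 \ell^v = d$. Passing to units gives a continuous injective group homomorphism $\iota\colon C \to C'$, which on matrices reads $\begin{pmatrix} x & dy \\ y & x \end{pmatrix} \mapsto \begin{pmatrix} x & s\ell^{v/2} y \\ s\ell^{v/2} y & x \end{pmatrix}$. Setting $G_1 := \iota(G)$ yields a topological isomorphism $G \cong G_1$ (both are compact and $\iota$ is continuous and injective), which automatically transports the normalized Haar measure of $G$ onto that of $G_1$. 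For the level bound, given $M_1 \in C'$ congruent to $I$ modulo $\ell^{n_0+v/2}$, the preimage in $R$ is obtained by dividing the off-diagonal entry of $M_1 - I$ by $s\ell^{v/2}$, which is integral because the $\ell$-adic valuation of that entry is at least $n_0 + v/2 \geqslant v/2$; the resulting matrix is in $C$ and congruent to $I$ modulo $\ell^{n_0}$, hence lies in $G$, so $M_1 \in G_1$, showing that the level of $G_1$ is at most $n_0 + v/2$.

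The heart of the proof is identifying $\iota(\mathcal{M}_{a,b}(G))$ with $\mathcal{M}_{a + v/2, b - v}(G_1)$ for $b > v$. The forward inclusion is a direct computation: for $M \in \mathcal{M}_{a,b}(G)$, the hypothesis $v_\ell(x^2 - dy^2) = b > v$ forces $v_\ell(x^2) = v_\ell(dy^2)$, and the condition ``not both divisible by $\ell$'' then forces $v_\ell(y) = 0$ and $v_\ell(x) = v/2$; inspection shows that $\iota(M) - I$ equals $\ell^{a+v/2}$ times a matrix with a unit entry, and its determinant has valuation $2a + b = 2(a+v/2) + (b - v)$. The reverse inclusion is the main obstacle: given $M_1 \in \mathcal{M}_{a+v/2,b-v}(G_1)$ and its unique preimage $M \in G$ under $\iota$, one must rule out the degenerate scenario in which the diagonal of $M_1 - I$ has valuation exactly $a + v/2$ while the off-diagonal has strictly larger valuation, since then $M$ would be congruent to $I$ modulo $\ell^{a+1}$ and belong to $\mathcal{M}_{a',b'}(G)$ for some $a' > a$. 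The key observation killing this scenario is that it would force $v_\ell(\det(M_1-I)) = 2a+v$, contradicting the defining condition $v_\ell(\det(M_1-I)) = 2a+b$ since $b > v$. Once the bijection is established, the equality of Haar measures follows from the measure-preserving isomorphism $G \cong G_1$.
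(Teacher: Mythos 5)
Your proof is correct and follows essentially the same route as the paper: the valuation analysis of $v_\ell(x^2-dy^2)$ for (i)--(ii), and for (iii) the same rescaling map $\omega\mapsto\sqrt{d}\,\omega_1$ (conjugation by $\operatorname{diag}(1,\sqrt{d})$), the same level bound, and the conclusion via a Haar-measure-preserving continuous isomorphism of profinite groups. The only difference is that you spell out the bijection $\mathcal{M}_{a,b}(G)\to\mathcal{M}_{a+v/2,\,b-v}(G_1)$ (ruling out the degenerate case via the determinant valuation), which the paper leaves as a ``straightforward verification''.
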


\begin{proof}
Fix $n>a+b+1$. By \eqref{nfaa} we can write any matrix in $ \mathcal M_{a,b}(n)$ as 
$$M=I+\ell^a \begin{pmatrix}
x & dy \\ y & x
\end{pmatrix}
$$
where $x,y\in \Z/\ell^{n-a}\Z$ satisfy $\lval(x^2-dy^2)=b$ and we have $\lval(x)=0$ or $\lval(y)=0$.

\emph{Proof of (i):} We have $\lval(x^2)\neq \lval(dy^2)$ and hence 
$v_\ell(x^2-dy^2)\leqslant v$, which implies that $\mathcal{M}_{a,b}(n)$ is empty for $b>v$.

\emph{Proof of (ii):} For $b>v$ we have $v_\ell(x^2)\geqslant v$ and we can write $b=v + v_\ell(x_1^2-my^2)$, where $x=\ell^{v/2} x_1$. We must have $x_1^2 \equiv my^2 \pmod {\ell}$, which is impossible for $\ell$ odd because $m$ is not a square modulo $\ell$.
If $\ell=2$ and $b>v+2$ we should similarly have $x_1^2 \equiv my^2 \pmod {8}$, which is impossible because $m$ is not a square modulo $8$.

\emph{Proof of (iii):} Since $d$ is a square in $\Z_\ell$, we may fix a square root $\sqrt{d}$ of it. Define $G_1$ to be the image of 
\[
\begin{array}{cccc}
\phi: & G & \to & C' 
\\ & I+\ell^a \begin{pmatrix}
x & d y \\ y & x
\end{pmatrix} & \mapsto & I+\ell^a \begin{pmatrix}
x &  y \sqrt{d} \\   y \sqrt{d} & x
\end{pmatrix}\,.
\end{array}
\]
Embedding $G$ in $\GL_2(\mathbb Q_{\ell})$, the map $\phi$ can be identified with the conjugation by $\begin{pmatrix}
1 & 0 \\ 0 & \sqrt{d}
\end{pmatrix}
$, thus $\phi$ is a continuous group isomorphism between $G$ and $G'$, and for every $M\in G$ we have $\det \phi(M)=\det(M)$ and $\det (\phi(M)-I)=\det(M-I)$.
Let $n_0$ denote the level of $G$. The level of $G_1$ is at most $n_0+v/2$ because any  matrix in $C'$ which is congruent to the identity modulo $\ell^{n_0+v/2}$ is the image via $\phi$ of a matrix in $C$ that is congruent to the identity modulo $\ell^{n_0}$:
$$I+\ell^{n_0} \begin{pmatrix} \ell^{v/2} z &  y d \\ y & \ell^{v/2} z \end{pmatrix} \stackrel{\phi}{\longrightarrow} I+\ell^{n_0+v/2} \begin{pmatrix} z & y\sqrt{m} \\ y\sqrt{m} & z \end{pmatrix}\,. $$

If $b>v$, we have $v_\ell(x^2)\geqslant v$ and a straightforward verification shows that $\phi$ induces a bijection from $\mathcal{M}_{a,b}(G)$ to $\mathcal{M}_{a+v/2,b-v}(G_1)$, and we conclude because a continuous isomorphism of profinite groups preserves the normalized Haar measure, see \cite[Proposition 18.2.2]{MR2445111}.\end{proof}

\begin{lem}\label{lem:FakeSplitCartan}
If $G$ is open in the Cartan subgroup of $\GL_2(\Z_2)$ with parameters $(0,1)$ the sets $\mathcal{M}_{a,1}$ and $\mathcal{M}_{a,2}$ are empty. Moreover, there exists an open subgroup $G_1$ of the subgroup of diagonal matrices in $\GL_2(\Z_2)$ such that the following holds: there is an explicit isomorphism between $G$ and $G_1$; the level of $G_1$ does not exceed the level of $G$ by more than 1; for all $b>2$ the sets $\mathcal{M}_{a,b}(G)$ and $\mathcal{M}_{a+1,b-2}(G_1)$ have the same Haar measure in $G$ and $G_1$ respectively. 
\end{lem}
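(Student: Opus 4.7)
For the emptiness of $\mathcal{M}_{a,1}$ and $\mathcal{M}_{a,2}$ I would use \eqref{nfaa}: an element of the Cartan $C$ with parameters $(0,1)$ has the form $M=\begin{pmatrix} x & y \\ y & x \end{pmatrix}$ with $x^2-y^2\in\Z_2^\times$, which forces $x$ and $y$ to have opposite parities. If $a=0$ then $x$ is even and $y$ odd, so both $x-1$ and $y$ are odd, and the congruence $w^2\equiv 1\pmod 8$ for every odd $w\in\Z_2$ forces $v_2(\det(M-I))=v_2((x-1)^2-y^2)\geqslant 3$, so $b\geqslant 3$. If $a\geqslant 1$, writing $M-I=2^a\begin{pmatrix} u & v \\ v & u \end{pmatrix}$ with $u,v$ not both even, one gets $\det(M-I)=2^{2a}(u^2-v^2)$, which by the same $\pmod 8$ analysis has $b=0$ when $u,v$ have opposite parities and $b\geqslant 3$ when both are odd. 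In either case $b\notin\{1,2\}$.

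Next I introduce the map
\[
\phi: C\to D,\qquad \begin{pmatrix} x & y \\ y & x \end{pmatrix}\mapsto \begin{pmatrix} x-y & 0 \\ 0 & x+y \end{pmatrix},
\]
where $D$ denotes the group of diagonal matrices in $\GL_2(\Z_2)$. A short calculation shows $\phi$ is a continuous group homomorphism, and the inverse formulas $x=(X+Y)/2$, $y=(Y-X)/2$ (well-defined because $X,Y\in\Z_2^\times$ are both odd) identify it with an isomorphism onto $D$. Set $G_1:=\phi(G)$. Since $G$ contains every matrix in $C$ congruent to $I$ modulo $2^{n_0}$, the image $G_1$ contains $I+2^{n_0}\operatorname{diag}(u-v,u+v)$ for every $u,v\in\Z_2$; substituting $u=s+t$ and $v=t-s$ shows that $G_1$ contains every diagonal matrix congruent to $I$ modulo $2^{n_0+1}$, bounding the level of $G_1$ by $n_0+1$.

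For the measure statement, fix $b>2$ and $a\geqslant 0$, and write $M\in\mathcal{M}_{a,b}(G)$ as $M=I+2^a\begin{pmatrix} u & v \\ v & u \end{pmatrix}$ (with $u=x-1$, $v=y$ when $a=0$). By the first paragraph $u,v$ are both odd. Setting $u':=(u-v)/2$ and $v':=(u+v)/2$, a direct computation gives $\phi(M)=I+2^{a+1}\operatorname{diag}(u',v')$, and since $u'+v'=u$ is odd, exactly one of $u',v'$ is odd. Moreover $v_2(u'v')=v_2((u^2-v^2)/4)=b-2$, so $\det_2(\phi(M)-I)=2(a+1)+(b-2)$ and $\phi(M)\in\mathcal{M}_{a+1,b-2}(G_1)$. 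The same computation run backwards shows that $\phi^{-1}$ sends $\mathcal{M}_{a+1,b-2}(G_1)$ into $\mathcal{M}_{a,b}(G)$: here the hypothesis $b>2$ is precisely what guarantees $v_2(u'v')>0$, so that exactly one of $u',v'$ is odd in any element of $\mathcal{M}_{a+1,b-2}(G_1)$. Hence $\phi$ restricts to a bijection, and because $\phi$ is a continuous isomorphism of profinite groups it preserves normalized Haar measure by \cite[Proposition~18.2.2]{MR2445111}, giving the asserted equality.

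The only delicate step is the parity bookkeeping underlying the last paragraph: one must track simultaneously the parities of $u,v,u',v'$, the congruence class of $M-I$ modulo the relevant power of $2$, and the $2$-adic valuation of $\det(M-I)$, in order to verify that $\phi$ shifts the indexing by $(a,b)\mapsto(a+1,b-2)$ rather than fixing it. The restriction $b>2$ is exactly what makes the bijection work in both directions, and it is not a surprise that it excludes precisely the indices $b\in\{1,2\}$ that the first paragraph already ruled out as empty.
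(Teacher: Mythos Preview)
Your argument is correct and follows essentially the same route as the paper: the emptiness for $b\in\{1,2\}$ comes from the mod-$8$ analysis of $u^2-v^2$, and the isomorphism is the change of variables $(u,v)\mapsto(u\pm v)$ sending the Cartan with parameters $(0,1)$ to the diagonal torus (the paper's $\phi$ differs from yours only by swapping the two diagonal entries). Your write-up is in fact more complete than the paper's sketch---you verify explicitly that $\phi$ is a group isomorphism onto all of $D$, check the level bound via the substitution $u=s+t,\ v=t-s$, and run the parity bookkeeping in both directions to confirm the bijection $\mathcal{M}_{a,b}(G)\leftrightarrow\mathcal{M}_{a+1,b-2}(G_1)$, whereas the paper defers these details to the analogous Lemma~\ref{lem:ReductionCartan}(iii).
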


\begin{proof}
We can write any matrix in $\mathcal{M}_{a,b}(G)$ as 
\begin{equation}\label{proofM}
M=I + 2^a \begin{pmatrix}
x & y \\ y & x
\end{pmatrix}
\end{equation}
where at least one between $x$ and $y$ is a $2$-adic unit. Working modulo $8$, we see that $b=v_2(x^2-y^2)$ cannot be $1$ or $2$.

We sketch the rest of the proof, which mimics Lemma \ref{lem:ReductionCartan} (iii). We define a map $\phi$ from $G$ to $\GL_2(\Z_2)$, denoting $G_1$ its image:  
\begin{equation}\label{proofM'}
\phi(M)=I + 2^a \begin{pmatrix}
x+y & 0 \\ 0 & x-y
\end{pmatrix}.
\end{equation}
We clearly have $\det_2(\phi(M)-I)=\det_2(M-I)$. If $b>2$, then $x+y$ and $x-y$ must be even and not both divisible by $4$, and it follows that $\phi(M)\in \mathcal{M}_{a+1,b-2}(G_1)$.
\end{proof}

\begin{lem}\label{lem:MabN0IsIndependentOfB}
Let $G$ be open in a Cartan subgroup $C$ of $\GL_2(\mathbb{Z}_\ell)$ with parameters $(0,d)$, where $d$ is a square in $\Z_\ell$. 
For any fixed value of $a$, the set $\mathcal{M}_{a,b}(n_0)$ does not depend on $b$ provided that $b\geqslant b_0$, where $b_0:=\max\{1+v_\ell(4d), n_0-a+v_\ell(2d)\}$.
\end{lem}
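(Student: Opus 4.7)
The plan is to describe $\mathcal{M}_{a,b}(n_0)$ explicitly via the factorisation $X^2 - dY^2 = UV$ with $U = X-eY$ and $V = X+eY$ (where $d = e^2$), and to show the description stabilises for $b \geqslant b_0$. The case $n_0 \leqslant a$ is immediate since $\mathcal{M}_{a,b}(n_0) \subseteq \{I\}$. Assume $n_0 > a$: writing $M - I = \ell^a\bigl(\begin{smallmatrix} X & dY \\ Y & X\end{smallmatrix}\bigr)$, reductions of elements of $\mathcal{M}_{a,b}$ modulo $\ell^{n_0}$ correspond to pairs $(x,y) \in (\mathbb{Z}/\ell^{n_0-a}\mathbb{Z})^2$ with $(x,y) \not\equiv (0,0) \pmod \ell$ admitting a lift $(X,Y) \in \mathbb{Z}_\ell^2$ with $v_\ell(X^2-dY^2) = b$. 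Set $k := v_\ell(2e)$, so $v_\ell(4d) = 2k$ and $v_\ell(2d) = k + w$ (with $w = v_\ell(e)$); the pair $(U, V)$ coming from some $(X, Y) \in \mathbb{Z}_\ell^2$ is characterised by $v_\ell(V - U) \geqslant k$ (together with a parity condition for $\ell=2$).

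The crux is a case analysis of $(v_\ell U, v_\ell V)$ given $b = v_\ell(UV) \geqslant b_0$. If $\min(v_\ell U, v_\ell V) < k$, the sublattice constraint forces $v_\ell U = v_\ell V$, yielding $b \leqslant 2(k-1) < v_\ell(4d) \leqslant b_0 - 1$, a contradiction. If $\min(v_\ell U, v_\ell V) > k$, then both $X = (U+V)/2$ and $Y = (V-U)/(2e)$ lie in $\ell\mathbb{Z}_\ell$, contradicting the non-vanishing hypothesis. The borderline $v_\ell U = v_\ell V = k$ gives $b = 2k = v_\ell(4d) < b_0$ since $b_0 \geqslant 1 + v_\ell(4d)$. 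Hence exactly one of $v_\ell U, v_\ell V$ equals $k$, and the other equals $b - k$. Assuming without loss of generality that $v_\ell V = k$, we get $v_\ell U = b - k \geqslant n_0 - a$ from $b_0 \geqslant n_0 - a + v_\ell(2d)$. This translates on the reduction side to $x \equiv ey \pmod{\ell^{n_0-a}}$ together with $v_\ell(x + ey) = k$ — conditions that depend only on $(x, y)$ and on $(n_0, a, d)$, not on $b$.

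Conversely, any $(x, y)$ satisfying these two conditions admits lifts realising every $b \geqslant b_0$: perturbing $X \mapsto X + \ell^{n_0-a}s$ shifts $U$ by an arbitrary element of $\ell^{n_0-a}\mathbb{Z}_\ell$ while leaving $v_\ell(V) = k$ unchanged, so any $v_\ell(U) \geqslant n_0 - a$, and hence any $b \geqslant b_0$, can be attained. Combining with the symmetric case $v_\ell U = k$ gives a description of $\mathcal{M}_{a,b}(n_0)$ depending only on $(n_0, a, d)$, completing the proof.

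The main obstacle is the case $\ell = 2$, where $k = 1 + w \geqslant 1$ is always positive and the parity of $U + V = 2X$ interacts non-trivially with the sublattice constraint. The two thresholds $1 + v_\ell(4d)$ and $n_0 - a + v_\ell(2d)$ in the definition of $b_0$ are calibrated precisely to rule out the degenerate configurations in the case analysis and to leave enough slack between $n_0 - a$ and $k$ for the perturbation step to produce lifts of any prescribed valuation $\geqslant b_0$.
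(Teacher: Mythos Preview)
Your overall strategy---factor $X^{2}-dY^{2}=(X-eY)(X+eY)$, analyse the valuations of the two factors, and then perturb a lift---is exactly the idea behind the paper's proof, and your case analysis showing that for $b\geqslant b_{0}$ one has $\{v_{\ell}U,v_{\ell}V\}=\{k,\,b-k\}$ with $b-k\geqslant n_{0}-a$ is correct.

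There is, however, a genuine gap in the ``Conversely'' paragraph. You assert that the perturbation $X\mapsto X+\ell^{\,n_{0}-a}s$ leaves $v_{\ell}(V)=k$ unchanged, but this is only true when $k<n_{0}-a$; nothing in the hypotheses forces this (for instance take $\ell$ odd, $d=\ell^{2k}$ with $k\geqslant n_{0}-a$). In that regime your condition ``$v_{\ell}(x+ey)=k$'' is not even a condition on the reduction, since $x+ey\equiv 0\pmod{\ell^{\,n_{0}-a}}$, and the perturbation of $X$ can very well change $v_{\ell}(V)$. A concrete example is $\ell=3$, $d=9$, $n_{0}=2$, $a=1$: here $k=n_{0}-a=1$, and adding $3s$ to $X$ alters $v_{3}(V)$ for suitable $s$.

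The paper avoids this by a small but decisive additional move: having shown that $y$ is a unit and that (after a choice of sign for $e$) one has $v_{\ell}(x-ey)\geqslant n_{0}-a+w$, it \emph{replaces} $y$ by $y'':=x/e$, which is still congruent to $y$ modulo $\ell^{\,n_{0}-a}$ and forces $U=0$ exactly. Only then does it perturb $x\mapsto x+\ell^{B}$, so that $U$ becomes $\ell^{B}$ on the nose while $V=2x+\ell^{B}$ retains valuation $k$ because $B=b'-k>k$. You can patch your argument the same way: once you know $x\equiv ey\pmod{\ell^{\,n_{0}-a}}$, pass to the lift $(x+\ell^{B},x/e)$ rather than perturbing $X$ alone. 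With that adjustment your proof becomes equivalent to the paper's.
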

\begin{proof}
Let $b \geqslant b_0>0$ and consider a matrix in $\mathcal{M}_{a,b}$:
$M=I+\ell^a \begin{pmatrix}
x & dy \\ y & x
\end{pmatrix}$.
It suffices to show that for every $b' \geqslant b_0$ there is $M' \in \mathcal{M}_{a,b'}$ that is congruent to $M$ modulo $\ell^{n_0}$.
We have $v_\ell(x^2-d y^2)\geqslant b>v_\ell(d)$ and at least one among $x$ and $y$ is a unit. One checks easily that $y$ cannot be divisible by $\ell$, so we have $v_\ell(x^2)= v_\ell(d)$ and we can define $y''=x/\sqrt{d}$. Given two units in $\mathbb Z_\ell$, either their sum or their difference has valuation $v_\ell(2)$, so up to replacing $\sqrt{d}$ by $-\sqrt{d}$ we get $v_\ell(x-\sqrt{d} y)\geqslant b-v_\ell(d)/2-v_\ell(2)\geqslant n_0-a+v_\ell(d)/2$ and hence $y'' \equiv y \pmod{\ell^{n_0-a}}$. Defining $B:=b'-v_\ell(d)/2-v_\ell(2)\geqslant n_0-a$, the matrix
$$M'=I+\ell^a \begin{pmatrix}
x+\ell^B & dy'' \\ y'' & x +\ell^B
\end{pmatrix}$$ is congruent to $M$ modulo $\ell^{n_0}$ and we have $\det(M'-I)=\ell^{2a}(2x\ell^B+\ell^{2B})$.
Since $B>v_\ell(d)/2+v_\ell(2)=v_\ell(x)+v_\ell(2)$ we have $\det_\ell(M'-I)=2a+b'$ and hence $M'\in \mathcal{M}_{a,b'}$.
\end{proof}

\subsection{Normalizers of Cartan subgroups}

Recall the notation from Section \ref{star}.

\begin{thm}\label{thm:CountCPrime}
Let $G$ be open in the normalizer of a Cartan subgroup of $\GL_2(\mathbb{Z}_\ell)$. Let $n_0$ be the level of $G$. 
\begin{enumerate}
\item[(i)] If $\ell$ is odd or $C$ is unramified, we have: 
$$\mu(\mathcal M^*_{a,b})= \left\{ 
\begin{array}{lllll}
0 & \text{if $a>0$} \\
\mu(\mathcal M^*_{0,b}(n_0)) & \text{if $a=0, b < n_0$} \\
\mu(\mathcal M^*_{0,n_0}(n_0)) \cdot (\ell-1) \cdot \ell^{n_0 -b- 1} & \text{if $a=0, b  \geqslant n_0$\,.}
\end{array}\right.$$

\item[(ii)] If $\ell=2$ and $C$ is ramified, we have: 
$$\mu(\mathcal M^*_{a,b})= \left\{ 
\begin{array}{lllll}
0 & \text{if $a> 1$} \\
\mu(\mathcal M^*_{a,b}(n_0)) & \text{if $a\leqslant 1$ and $2a+b \leqslant n_0$} \\
\mu(\mathcal M^*_{a,n_0+1}(n_0)) \cdot 2^{n_0-2a-b} & \text{if $a\leqslant 1$ and $2a+b >  n_0$\,.}
\end{array}\right.$$ 
\end{enumerate}
\end{thm}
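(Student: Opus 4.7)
My plan is to compute $\mu(\mathcal{M}^*_{a,b})=\lim_{n\to\infty}\#\mathcal{M}^*_{a,b}(n)/\#G(n)$ by combining Lemma \ref{lemma:EverythingHasWellDefinedAB} with the lifting formulas of Theorem \ref{thm:GeneralLift} (when $C$ is unramified) and Theorem \ref{thm:LiftsNormalizer} (when $C$ is ramified). The vanishing statements (the case $a>0$ in (i), and $a>1$ in (ii)) are immediate from Proposition \ref{complement}; the rest is to evaluate the measure in the surviving range.

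For case (i), only $a=0$ is relevant. When $b<n_0$ we have $a+b<n_0$, so Lemma \ref{lemma:EverythingHasWellDefinedAB} gives $\mu(\mathcal{M}^*_{0,b})=\mu(\mathcal{M}^*_{0,b}(n_0))$ directly. When $b\geq n_0$, Theorem \ref{thm:LiftsNormalizer}(ii) (or, for unramified $C$, the analysis of normalizer lifts in Theorem \ref{thm:GeneralLift}) yields $\mathcal{M}^*_{0,b}(n)=\mathcal{N}_{0,b}(n)$ for every $n\geq n_0$, and the lifting factors $f(k)=\ell$ for $n_0\leq k<b$ together with $f(b)=\ell(\ell-1)$ produce
\[
\#\mathcal{N}_{0,b}(b+1)=\#\mathcal{N}_{0,b}(n_0)\cdot\ell^{b-n_0}\cdot\ell(\ell-1).
\]
Dividing by $\#G(b+1)=\#G(n_0)\cdot\ell^{2(b+1-n_0)}$ and using $\mu(\mathcal{M}^*_{0,b})=\mu(\mathcal{M}^*_{0,b}(b+1))$ (which holds because $b+1>a+b$) produces the factor $(\ell-1)\cdot\ell^{n_0-b-1}$. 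The essential final observation is that $\mathcal{N}_{0,b}(n_0)$ does not depend on $b$ for $b\geq n_0$, because its defining constraint at level $n_0$ reduces to $\det_\ell(M-I)\geq n_0$; this lets the answer be written uniformly as $\mu(\mathcal{M}^*_{0,n_0}(n_0))\cdot(\ell-1)\cdot\ell^{n_0-b-1}$.

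Case (ii) proceeds along the same lines with the $\ell=2$ version of Theorem \ref{thm:LiftsNormalizer}, where $f(k)=2$ for $k<2a+b$ and $f(k)=4$ for $k\geq 2a+b$. If $2a+b\leq n_0$, then $f(n)=4=\#\mathbb{T}$ at every level $n\geq n_0$, so every lift preserves $\mathcal{N}_{a,b}$ and the density already equals $\mu(\mathcal{M}^*_{a,b}(n_0))$; this treats simultaneously the generic sub-subcase $a+b<n_0$ (via Lemma \ref{lemma:EverythingHasWellDefinedAB}) and the lone boundary pair $a=0,\,b=n_0$ where that lemma just misses. If $2a+b>n_0$, telescoping up to level $2a+b+1$ yields
\[
\mu(\mathcal{N}_{a,b}(2a+b+1))=\mu(\mathcal{N}_{a,b}(n_0))\cdot\frac{2^{2a+b-n_0}\cdot 4}{4^{2a+b+1-n_0}}=\mu(\mathcal{N}_{a,b}(n_0))\cdot 2^{n_0-2a-b},
\]
and again $\mathcal{N}_{a,b}(n_0)$ depends only on $M\equiv I\pmod{2^a}$, $M\not\equiv I\pmod{2^{a+1}}$ and $\det_2(M-I)\geq n_0+1$, so it is constant in $b$ once $2a+b>n_0$ and can be replaced by $\mathcal{M}^*_{a,n_0+1}(n_0)$.

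The main subtlety will be the exact boundaries $a+b=n_0$ (respectively $2a+b=n_0+1$), where Lemma \ref{lemma:EverythingHasWellDefinedAB} does not literally apply: one then has to read off from the explicit value of $f(n_0)$ in Theorem \ref{thm:GeneralLift} or Theorem \ref{thm:LiftsNormalizer} that every lift already satisfies the defining conditions, so that the density has stabilized by level $n_0$. Once this is in hand, the stated formulas follow from the telescoping above together with the crucial constancy of $\mathcal{N}_{a,b}(n_0)$ in $b$ in the large-$b$ regime.
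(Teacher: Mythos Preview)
Your proposal is correct and follows essentially the same route as the paper: Proposition~\ref{complement} for the vanishing, Lemma~\ref{lemma:EverythingHasWellDefinedAB} for the small-$b$ range, and then a telescoping product of the lifting factors from Theorems~\ref{thm:GeneralLift} and~\ref{thm:LiftsNormalizer} together with the observation that $\mathcal{N}_{a,b}(n_0)$ is independent of $b$ in the large-$b$ regime. The paper compresses the telescoping step into the phrase ``by the same argument used to prove Proposition~\ref{prop:GeneralCount}'', whereas you write it out explicitly, but the content is the same; your only mild imprecision is that for unramified $C$ you invoke ``the analysis of normalizer lifts in Theorem~\ref{thm:GeneralLift}'' rather than a formally stated result---this is fine, since the proof of that theorem does treat the $M\notin C(n)$ case separately via $\mathbb{T}_1$ and yields the same $f(n)$.
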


\begin{proof}
For the first assertion, by Proposition  \ref{complement} we have $\mu(\mathcal M^*_{a,b})=0$ for $a>0$.
For $b<n_0$ we clearly have $\mu(\mathcal M^*_{0,b})=\mu(\mathcal M^*_{0,b}(n_0))$. 
If $b \geqslant n_0$, Theorem \ref{thm:LiftsNormalizer} (ii) implies 
\[
\mathcal{M}^*_{0,b}(n_0) = \{ M \in (G \setminus C)(n_0) \bigm\vert \mathrm{det}_\ell(M-I) \geqslant n_0 \}=\mathcal{M}_{0,n_0}^*(n_0)
\]
so in particular we have $\mu(\mathcal M^*_{0,b}(n_0))=\mu(\mathcal M^*_{0,n_0}(n_0))$. We conclude by Theorem \ref{thm:GeneralLift} and \ref{thm:LiftsNormalizer} respectively, by the same argument used to prove Proposition \ref{prop:GeneralCount}.
The second assertion follows analogously from Proposition  \ref{complement} and Theorem \ref{thm:LiftsNormalizer}. Indeed, if $2a+b > n_0$ then $\mathcal{M}^*_{a,b}(n_0)$ is independent of $b$ by Theorem \ref{thm:LiftsNormalizer} (ii).
\end{proof}

\begin{cor}\label{cor:AsymptoticGrowthCartan}
Let $G$ be open in the normalizer $N$ of a Cartan subgroup $C$ of $\GL_2(\Z_\ell)$.
For $a \in \{0,1\}$ there exist (effectively computable) rational numbers $c_1(a), c_2(a), c_3(a)$ such that 
\[
\mu(\mathcal{M}_{a,b}) = c_1(a) \ell^{-b}, \qquad \mu( \mathcal{M}_{a,b} \cap (N\setminus C)) = c_2(a) \ell^{-b}, \qquad \mu( \mathcal{M}_{a,b} \cap C) = c_3(a) \ell^{-b}
\]
hold for all sufficiently large $b$ (and the bound is effective). The rational constants $c_i(a)$ may depend on $\ell$ and $G$, as well as on $a$.
\end{cor}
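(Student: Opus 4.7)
The plan is to split $\mathcal{M}_{a,b}=(\mathcal{M}_{a,b}\cap C)\sqcup\mathcal{M}^*_{a,b}$, reducing the statement to separately exhibiting $c_2(a)$ and $c_3(a)$ with $c_1(a)=c_2(a)+c_3(a)$. The existence of $c_2(a)$ is then immediate from Theorem \ref{thm:CountCPrime}: in both of its cases, once $b$ exceeds a bound depending only on $n_0$ and $a$, the formulas give $\mu(\mathcal{M}^*_{a,b})$ as an explicit rational constant times $\ell^{-b}$, and for the values of $a$ outside the admissible range (namely $a>0$ in case (i), $a>1$ in case (ii)) the measure is zero, so I take $c_2(a)=0$.

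For $c_3(a)$, my plan is to set $H:=G\cap C$ (an open subgroup of $C$) and use the observation preceding Proposition \ref{complement} that $\mu_G(\mathcal{M}_{a,b}\cap C)=\tfrac12\,\mu_H(\mathcal{M}_{a,b}(H))$, with the factor $\tfrac12$ replaced by $1$ when $G\subseteq C$. This reduces the question to computing the Haar measure of $\mathcal{M}_{a,b}(H)$ in an open subgroup of a Cartan subgroup. If $C$ is unramified, Proposition \ref{prop:GeneralCount} applied to $H$ (whose level I denote $n_H$) settles matters directly: in the regime $a<n_H\leqslant a+b$, the displayed formula combined with $\mu_{a,b}(n_H)=\mu_{a,n_H-a}(n_H)$ from Proposition \ref{prop:FinitelyManyCab} has exactly the shape $c\cdot\ell^{-b}$; in the regime $a\geqslant n_H$ (only possible for $a=1$ when $n_H\leqslant 1$) the second closed form of that proposition is already of the required shape. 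In particular $c_3(a)=0$ for nonsplit $C$, since there $\#\mathbb{T}-\#\mathbb{T}^\times-1=0$.

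If $C$ is ramified with parameters $(0,d)$ where $d=m\ell^v$ and $\ell\nmid m$, I will apply Lemma \ref{lem:ReductionCartan}. Parts (i)--(ii) immediately give $c_3(a)=0$ unless $v$ is even and $m$ is a square in $\Z_\ell^\times$; in that remaining case part (iii) produces an isomorphic open subgroup $H_1$ of the Cartan with parameters $(0,1)$ satisfying $\mu_H(\mathcal{M}_{a,b}(H))=\mu_{H_1}(\mathcal{M}_{a+v/2,\,b-v}(H_1))$ for $b>v$. For odd $\ell$ the Cartan with parameters $(0,1)$ is split (Proposition \ref{prop:ConditionsCDodd}), so the unramified analysis above handles $H_1$. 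For $\ell=2$, I will further apply Lemma \ref{lem:FakeSplitCartan} to realise $H_1$ as isomorphic to an open subgroup $H_2$ of the diagonal split Cartan, with a further shift $(a+v/2,b-v)\mapsto(a+v/2+1,b-v-2)$ for $b>v+2$, and the split analysis then handles $H_2$.

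The hard part will be the bookkeeping for the ramified reduction: one must verify that the compound shift sends the first index to a fixed integer (depending only on $a$ and $v$) and translates the second by a fixed even amount, so that the $\ell^{-b}$ scaling is preserved, and that the level of the reduced Cartan grows by at most $v/2+1$, keeping all constants finite. Since $a\in\{0,1\}$ is fixed, once $b$ is large enough the reduced parameters land in the regime covered by the unramified formulas, yielding an explicitly computable $c_3(a)$.
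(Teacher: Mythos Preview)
Your proposal is correct and follows essentially the same route as the paper: split $\mathcal{M}_{a,b}$ into its $C$- and $(N\setminus C)$-parts, read off $c_2(a)$ from Theorem~\ref{thm:CountCPrime}, and for $c_3(a)$ reduce the ramified case via Lemmas~\ref{lem:ReductionCartan} and~\ref{lem:FakeSplitCartan} to an open subgroup of a split Cartan, where Propositions~\ref{prop:FinitelyManyCab} and~\ref{prop:GeneralCount} give the shape $c\cdot\ell^{-b}$.

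Two small points of care. First, your justification ``$c_3(a)=0$ for nonsplit $C$ since $\#\mathbb{T}-\#\mathbb{T}^\times-1=0$'' goes through the second displayed formula of Proposition~\ref{prop:GeneralCount}, which only applies when $n_H\leqslant a$; for $a<n_H$ you should instead invoke Remark~\ref{rem:emptyness} directly ($\mathcal{M}_{a,b}(C)=\emptyset$ for $b>0$ in the nonsplit case), which immediately gives $\mu_{a,b}(n_H)=0$. Second, Proposition~\ref{prop:FinitelyManyCab} carries the hypothesis $\mathcal{M}_{a,b}\neq\emptyset$, so in the split case you need to note that for $b\geqslant n_H-a$ the emptiness of $\mathcal{M}_{a,b}(H)$ is decided by a condition at level $n_H$ that is independent of $b$ (Remark~\ref{vuoto} combined with the fact that $\mathcal{M}_{a,b}(C;n_H)$ stabilises)---this is what the paper's parenthetical ``in view of Remark~\ref{rem:emptyness}'' is doing.
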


\begin{proof}
The assertion for $\mathcal{M}_{a,b}$ follows from the other two, and the assertion for $\mathcal{M}_{a,b} \cap (N\setminus C)$ holds by Theorem \ref{thm:CountCPrime}. 
Now consider $\mathcal{M}_{a,b} \cap C$. Because of Lemmas \ref{lem:ReductionCartan} and \ref{lem:FakeSplitCartan}, we only need to consider the case when $C$ is a split Cartan subgroup. We apply Proposition \ref{prop:FinitelyManyCab} (in view of Remark \ref{rem:emptyness}) to deduce that $\mu_{a,b}(n_0)$ is constant for $b\geqslant n_0$ and then apply Proposition \ref{prop:GeneralCount}.
\end{proof}

\section{The results of the Introduction}

\subsection{Proof of Theorem \ref{main-thm}}

\begin{defi}\label{admissible}
We call a subset of $\mathbb N^2$ \emph{admissible} if it is the product of two subsets of $\mathbb N$, each of which is either finite or consists of all integers greater than some given one. The family of finite unions of admissible sets is closed w.r.t. intersection, union and complement.
\end{defi}

We describe a general computational strategy to determine $\mu_{a,b}$ for all $a,b\geqslant 0$. 
Depending on the input data (i.e.\@ a finite amount of information about the group $G$), we can choose which of the previous results must be applied, and we can compute the finitely many rational parameters that appear in the statements. After a case distinction, we have formulas for all measures $\mu_{a,b}$ that depend only on $a$, $b$, and finitely many known constants. As it can be seen from the explicit description below, the cases give a partition of $\mathbb N^2$ into finitely many admissible subsets and on each of them the formula for $\mu_{a,b}$  is as requested.
We first need to express the relevant properties of $G$ in terms of finitely many parameters:
\begin{enumerate}
\item The group $G$ is open in $G'$, which is either $\GL_2(\Z_\ell)$, 
a Cartan subgroup, or the normalizer of a Cartan subgroup. We describe a Cartan subgroup  with the integer parameters $(c,d)$ of Section \ref{subsec-classification}, which also determine whether this is split, nonsplit or ramified. The cardinality of the tangent space $\mathbb T$ and of its subset $\mathbb T^{\times}$ is known, see Section \ref{subsec-Lie}.

\item We fix an integer $n_0  \geqslant 1$ such that $G$ is the inverse image in $G'$ of $G(n_0)$ for the reduction modulo $\ell^{n_0}$. If $\ell=2$ and $G'$ is (the normalizer of) a ramified Cartan subgroup, we take $n_0 \geqslant 2$ ($n_0$ is not necessarily the level of $G$, see Remark \ref{rem:Level}).

\item We need to know the finite group $G(n_0)$ explicitly. From this we extract various data, including the order of $G(n_0)$, the index $[G'(n_0): G(n_0)]=[G':G]$, and the following information: for each of the finitely many pairs $(a,b)$ such that $a<n_0$ and $b \leqslant n_0-a$, we need to know the counting measure $\mu_{a,b}(n_0)$ and whether the set $G(n_0)\cap \mathcal M_{a,b}(G'; n_0)$ is empty or not. For (normalizers of) ramified Cartan subgroups we may also need finitely many other quantities which can all be read off $G(n_0)$, see the description below.
\end{enumerate}

We make repeated use of the following remark: suppose that for $(a,b)$ in some admissible set $S=A \times B$ with $A$ finite we have $\mu_{a,b}=c(a) \ell^{-b}$, where $c(a)$ is a rational number depending on $a$. Then $S$ is the finite union of the sets $S_a=\{a\} \times B$, and by choosing the constant $c'(a)$ appropriately we have $\mu_{a,b} = c'(a) \ell^{-a \dim(G)-b}$ for all $(a,b) \in S_a$.

\emph{If $G'=\GL_2(\Z_\ell)$:}
We can compute the values $\mu_{a,b}$ for all pairs $(a,b)$ such that $\mathcal M_{a,b}\neq \emptyset$ by Propositions \ref{prop:FinitelyManyCab} and \ref{prop:GeneralCount}. 
Up to refining the partition, we can ensure that $\mu_{a,b}$ is a constant multiple of $\ell^{-4a-b}$ on every set of the partition.

We are left to determine the pairs $(a,b)$ such that $\mathcal M_{a,b}= \emptyset$ (and hence $\mu_{a,b}=0$) and show that they form an admissible subset of $\mathbb N^2$.
By Remark \ref{rem:emptyness} we know $\mathcal{M}_{a,b}(G') \neq \emptyset$, and by Remark \ref{vuoto}  we just need to know whether $G(n_0)\cap \mathcal M_{a,b}(G'; n_0)$ is empty. By Proposition \ref{prop:FinitelyManyCab} (applied to $G'$) there are only finitely many distinct sets of the form $\mathcal M_{a,b}(G'; n_0)$ to consider and it is a finite computation to determine those that intersect $G(n_0)$ trivially.

\emph{If $G'$ is a nonsplit Cartan subgroup:}
By Lemma \ref{finitea} and Remark \ref{rem:emptyness} we reduce to the case $a\leqslant n_0$ and $b=0$. Thus by Proposition \ref{prop:GeneralCount} we only need to evaluate $\mu_{a,0}(n_0)$ for $a \leqslant n_0$. Since we only have finitely many values of $a$ to consider, up to refining the partition we find that $\mu_{a,b}$ is a constant multiple of  $\ell^{-2a-b}$ on every set of the partition.

\emph{If $G'$ is a split Cartan subgroup:}
By Lemma \ref{finitea} we reduce to the case $a\leqslant n_0$, so fix one of those finitely many values for $a$. By Proposition \ref{prop:GeneralCount} it suffices to evaluate $\mu_{a,b}(n_0)$ for all $b\geqslant 0$. If $\mathcal M_{a,b}\neq \emptyset$, by Proposition \ref{prop:FinitelyManyCab} we only need to consider the finitely many cases for which $b\leqslant n_0$.

We are left to determine the pairs $(a,b)$ such that $\mathcal M_{a,b}= \emptyset$ (and hence $\mu_{a,b}=0$) and show that they form an admissible subset of $\mathbb N^2$. By Remark \ref{rem:emptyness} the set $\mathcal{M}_{a,b}(G')$ is empty (and hence $\mathcal M_{a,b}= \emptyset$) if and only if $\ell=2$ and $a=0$. In the remaining cases we have $\mathcal{M}_{a,b}(G') \neq \emptyset$ and $a\leqslant n_0$, so we may reason as for $G'=\GL_2(\Z_\ell)$.
Up to refining the partition, we find that $\mu_{a,b}$ is a constant multiple of $\ell^{-2a-b}$ on every set of the partition.

\emph{If $G'$ is a ramified Cartan subgroup:}
By Lemma \ref{finitea} we reduce to the case $a\leqslant n_0$, so fix one of these finitely many values for $a$. By Propositions \ref{prop:ConditionsCDodd} and \ref{prop:ConditionsCD2}, the parameters for $C$ are $(0,d)$ and we can apply Lemma \ref{lem:ReductionCartan}.
If we are in cases (i)-(ii) of this lemma, we only need to consider the finitely many values $b\leqslant v_\ell (d)+2$. The measure $\mu_{a,b}$ for a single pair $(a,b)$ can be computed explicitly as $\mu_{a,b}(a+b+1)$. Notice that the group $G(a+b+1)$ and hence its subset $\mathcal{M}_{a,b}(a+b+1)$ can be determined from the knowledge of $G'$ and $G(n_0)$.
Now suppose that we are in case (iii) of Lemma \ref{lem:ReductionCartan}. Recalling that $a\leqslant n_0$ is fixed, we may compute the finitely many measures $\mu_{a,b}$ where $b\leqslant v_\ell (d)$. For $b> v_\ell (d)$ we reduce to a similar problem for an unramified Cartan subgroup: if $\ell$ is odd, the Cartan subgroup with parameters $(0,1)$ is unramified; if $\ell=2$ we further apply Lemma \ref{lem:FakeSplitCartan}.
Once more, this gives a partition as requested.

\emph{The case when $G'$ is the normalizer of a Cartan subgroup:}
As shown in Section \ref{star}, to reduce to the case when $G'$ is a Cartan subgroup it suffices to compute the measures $\mu(\mathcal M^*_{a,b})$ for all $a,b\geqslant 0$. We achieve this by Theorem \ref{thm:CountCPrime}: it suffices to compute $\mu(\mathcal M^*_{a,b}(n_0))$ for finitely many pairs $(a,b)$. 
The measures in the Cartan subgroup and those related to its complement in the normalizer add up to an expression of the desired form, because they can both be written as $\ell^{-2a-b}$ times a constant.

\subsection{The special case where $G$ has index $1$}

\begin{proof}[Proof of Theorem \ref{thm-countGL2}]
Consider Definition \ref{LieGL} and Proposition \ref{prop:GeneralCount}. The cases with $a>0$ are clear because $n_0=1 \leqslant a$. If $a=b=0$, we have  $\mu_{0,0}=\mu_{0,0}(1)=\#\mathcal{M}_{0,0}(1)/\#\GL_2(\Z/\ell\Z)$ so it suffices to prove 
$\#\mathcal{M}_{0,0}(1)=\ell(\ell^3-2\ell^2-\ell+3)\,.$
Equivalently, we have to show that there are $\#\GL_2(\Z/\ell\Z)-\#\mathcal{M}_{0,0}(1)=\ell^3-2\ell$ matrices in  $\GL_2(\Z/\ell\Z)$ that have $1$ as an eigenvalue. This is done e.g. in the course of \cite[Proof of Theorem 5.5]{MR2640290} (see also \cite[Section 4]{MR1995144}), but for the convenience of the reader we sketch the computation. Matrices admitting 1 as an eigenvalue are the identity and those that are conjugate to one of the following: 
 $$J_1=\begin{pmatrix} 1& 1\\ 0& 1\end{pmatrix}\qquad J_{\lambda}=\begin{pmatrix} 1& 0\\ 0& \lambda \end{pmatrix}\qquad \lambda \neq 0,1\,.$$
Since the centralizer of $J_1$ has size $\ell(\ell-1)$ while that of $J_\lambda$ has size $(\ell-1)^2$,  
we may conclude by computing the size of the conjugacy classes as the index of the centralizer. 

If $a=0$ and $b>0$, we have to evaluate $(\ell-1)\cdot \ell^{-b}\cdot \mu_{0,b}(1)$ for $b>0$. By Proposition \ref{prop:FinitelyManyCab} and Remark \ref{rem:emptyness} we have 
$\mu_{0,b}(1)=\mu_{0,1}(1)=\#\mathcal{M}_{0,1}(1)/\#\GL_2(\Z/\ell\Z)$ so it suffices to prove 
$$\#\mathcal{M}_{0,1}(1)=(\ell^2-\ell-1)(\ell+1)\,.$$
We may conclude by noticing that $\mathcal{M}_{0,1}(1)$ consists of the $\ell^3-2\ell$ matrices in $\#\GL_2(\Z/\ell\Z)$ that have $1$ as an eigenvalue, with the exception of the identity matrix.
\end{proof}

\begin{proof}[Proof of Theorem \ref{thm-countsplitnonsplitCartan}]
We can take $n_0=1$ so the cases with $a\geqslant 1$ follow immediately from Proposition \ref{prop:GeneralCount} and \eqref{eqprop:XB}. Now suppose $a=0$.
Consider a split Cartan subgroup with the diagonal model. For $b=0$, in order to evaluate $\mu_{0,0}(1)$ we count those diagonal matrices $\operatorname{diag}(x,y)$ such that $xy$ and $(x-1)(y-1)$ are in $(\Z/\ell\Z)^\times$: this means $x, y \not \equiv 0, 1 \pmod \ell$, hence there are $(\ell-2)^2$ choices, out of $(\ell-1)^2$ total elements. 
For $b>0$ we have $\mu_{0,b}(1)=\mu_{0,1}(1)$ by Proposition \ref{prop:FinitelyManyCab} and Remark \ref{rem:emptyness}. There are $2(\ell-2)$ diagonal matrices in $\GL_2(\Z/\ell\Z)$ such that exactly one of the two diagonal entries is congruent to 1 modulo $\ell$, so we get by Proposition \ref{prop:GeneralCount}:
\[
\mu_{0,b}=\mu_{0,b}(1) \ell^{-b} (\ell-1)  = \frac{2(\ell-2)}{(\ell-1)^2} (\ell-1) \ell^{-b}\,.
\]

Now consider the nonsplit case. By Remark \ref{rem:emptyness} we know $\mathcal{M}_{a,b}=\emptyset$ for $b>0$. For $b=0$ we need to evaluate $\mu_{0,0}$: by Lemma \ref{lemma:EverythingHasWellDefinedAB} and by the previous case $a \geqslant 1$ we have  
\[
1 = \sum_{a,b \geqslant 0} \mu_{a,b} = \sum_{a \geqslant 0} \mu_{a,0}= \mu_{0,0} + \sum_{a \geqslant 1} \ell^{-2a}\,.
\]
\end{proof}
\begin{proof}[Proof of Theorem \ref{thm-countnormalizerCartan}] 

Let $C$ be the Cartan subgroup and let $C'$ be as in Lemma \ref{lem-Norm}. Fixing some $n> a+b$ we get 
$$\mu_{a,b}=\mu_{a,b}(n)=\frac{\#\mathcal{M}_{a,b}(n)}{\#(C\cup C')(n)}=\frac{\#(\mathcal{M}_{a,b}(n)\cap C(n))+\#(\mathcal{M}_{a,b}(n)\cap C'(n))}{2\cdot \#C(n)}\,.$$
By definition we have $\mu^C_{a,b}=\#(\mathcal{M}_{a,b}(n)\cap C(n))/\#C(n)$, so it suffices to show $\mu^*_{a,b}=\#(\mathcal{M}_{a,b}(n)\cap C'(n))/\#C(n)$.
If $a>0$ then no matrix in $C'(n)$ is in $\mathcal{M}_{a,b}(n)$ by Lemma \ref{lemma:NormalizerAIsZero}. For $a=0$ we are left to prove that for $n= b+1$ we have $\#\mathcal{M}_{0,b}(n)\cap C'(n)=\mu^*_{0,b} \cdot \#C(n)$. By Lemma \ref{lemma:NormalizerAIsZero} the elements of $C'(b+1)$ are those matrices of the form 
\begin{equation}\label{form}
M=\begin{pmatrix}
\alpha & -d \beta + c \alpha \\ \beta & -\alpha
\end{pmatrix} \qquad \alpha, \beta \in \Z/\ell^{b+1}\Z
\end{equation}
where $c, d$ are here the reductions modulo $\ell^{b+1}$ of the parameters of $C$.
Thus we need to count the pairs $(\alpha, \beta) \in (\Z/\ell^{b+1}\Z)^2$ satisfying 
\begin{equation}\label{proof1}
\mathrm{det}_\ell (M-I)=\lval(1-\alpha^2+d\beta^2 -c \alpha \beta)=b\,. 
\end{equation}
We also need $\mathrm{det}_\ell (M)=\lval(-\alpha^2+d\beta^2 -c \alpha \beta)=0$, which for $b>0$ follows from \eqref{proof1}.

The count for the split case will give $(\ell-1)(\ell-2)$ for $b=0$ and $(\ell-1)^2\ell^b$ for $b>0$. The count for the nonsplit case will give $(\ell+1)(\ell-2)$ for $b=0$ and $(\ell^2-1)\ell^{b}$ for $b>0$. We then conclude by Lemma \ref{lemma:CardinalityCartans} because $\# C(b+1)$ equals $(\ell-1)^2\ell^{2b}$ and $(\ell^2-1)\ell^{2b}$ for the split and the nonsplit case respectively.
One can easily check that the affine curve $\mathcal{D}: 1-x^2+y(dy-cx)=0$ (defined over $\Z_\ell$) is smooth over $\Z/\ell\Z$. We have 
$\#\mathcal{D}(\Z/\ell\Z)=\ell \pm 1$, where the sign is $-$ (resp.~$+$) if $C$ is split (resp.~nonsplit). Indeed, $\mathcal{D}$ can be identified over $\Z/\ell\Z$ with the open subscheme of $\{Z^2-X^2+Y(dY-cX)=0\} \cong \mathbb{P}^1$ given by $Z \neq 0$, and by Propositions \ref{prop:ConditionsCDodd} and \ref{prop:ConditionsCD2} there are two   (resp. zero) $\Z/\ell\Z$-points with $Z=0$ if $C$ is split (resp. nonsplit). 

\emph{The case $b=0$.} There are precisely $\ell^2-(\ell \pm 1)$ pairs $(\alpha, \beta)\in (\Z/\ell \Z)^2$ that do {not} correspond to points in $\mathcal{D}(\Z/\ell\Z)$. Since we only want invertible matrices, we need to exclude those pairs such that $-\alpha^2+d\beta^2-c\alpha\beta =0$. 
By Propositions \ref{prop:ConditionsCDodd} and \ref{prop:ConditionsCD2} this equation has $2\ell-1$ solutions if $C$ is split and has only the trivial solution $\alpha= \beta=0$ if $C$ is nonsplit.

\emph{The case $b>0$.} As $\mathcal{D}$ is smooth over $\mathbb{F}_\ell$, by (the higher-dimensional version of) Hensel's Lemma \cite[Proposition 7.8]{Nekovar} we have $\#\mathcal{D}(\Z/\ell^b\Z)=\ell^{b-1}\cdot \#\mathcal{D}(\Z/\ell\Z)$. A pair $(\alpha, \beta)\in (\Z/\ell^{b+1}\Z)^2$ as in \eqref{proof1} reduces to a point in $\mathcal{D}(\Z/\ell^b\Z)$, so it suffices to prove that there are precisely $\ell^2-\ell$ pairs $(\alpha, \beta)$ as in \eqref{proof1} that lie over some fixed $(\overline{\alpha}, \overline{\beta}) \in \mathcal{D}(\Z/\ell^b\Z)$. There are $\ell^2$ lifts of $(\overline{\alpha}, \overline{\beta})$ to $(\Z/\ell^{b+1}\Z)^2$ and we must avoid those in $\mathcal{D}(\Z/\ell^{b+1}\Z)$, which are exactly $\ell$ again by Hensel's Lemma.
\end{proof}

\bibliographystyle{abbrv}
\bibliography{biblio}

\subsection*{Acknowledgements} 
We thank the referee for their useful suggestions. The second author gratefully acknowledges financial support from the SFB-Higher Invariants at the University of Regensburg.

\end{document}